\pgfplotsset{compat = 1.3}
\newcommand{\T}{\ensuremath{\mathsf{T}}}
\spnewtheorem{assumption}{Assumption}{\bf}{\it}
\journalname{Statistics and Computing}
\begin{document}

\twocolumn

\title{Bayesian ODE Solvers: The Maximum A Posteriori Estimate
}


\author{Filip Tronarp         \and
        Simo S\"arkk\"a       \and
        Philipp Hennig
}


\institute{Filip Tronarp \at
              University of T\"ubingen \\
              \email{filip.tronarp@uni-tueingen.de}
           \and
           Simo S\"arkk\"a \at
           Aalto University \\
              \email{simo.sarkka@aalto.fi}
           \and
           Philipp Hennig \at
           University of T\"ubingen and MPI for Intelligent Systems \\
              \email{philipp.hennig@uni-tuebingen.de}
}

\date{Received: date / Accepted: date}

\maketitle

\begin{abstract}
There is a growing interest in probabilistic numerical solutions to ordinary differential equations. In this paper, the \emph{maximum a posteriori estimate} is studied under the class of $\nu$ times differentiable linear time invariant Gauss--Markov priors, which can be computed with an iterated extended Kalman smoother. The maximum a posteriori estimate corresponds to an optimal interpolant in the reproducing kernel Hilbert space associated with the prior, which in the present case is equivalent to a Sobolev space of smoothness $\nu+1$. Subject to mild conditions on the vector field, convergence rates of the maximum a posteriori estmate are then obtained via methods from nonlinear analysis and scattered data approximation. These results closely resemble classical convergence results in the sense that a $\nu$ times differentiable prior process obtains a global order of $\nu$, which is demonstrated in numerical examples.
\keywords{Probabilistic numerical methods, Maximum a posteriori estimation, Kernel methods.}
\end{abstract}

\section{Introduction}
Let $\mathbb{T} = [0,T], \ T < \infty$, $f \colon \mathbb{T} \times \mathbb{R}^d \to \mathbb{R}^d$, $y_0 \in \mathbb{R}^d$ and consider the following ordinary differential equation (ODE):
\begin{equation}\label{eq:ode}
Dy(t) = f(t,y(t)), \ y(0) = y_0,
\end{equation}
where $D$ denotes the time derivative operator. Approximately solving  \eqref{eq:ode} on a discrete mesh $\mathbb{T}_N = \{t_n\}_{n=0}^N, \ 0 = t_0 < t_1 < \ldots < t_N = T$, involves finding a function $\hat{y}$ such that $\hat{y}(t_n) \approx y(t_n), \ n = 0, 1, \ldots, N$ and a procedure for finding $\hat{y}$ is called a \emph{numerical solver}. This is an important problem in science and engineering, and vast base of knowledge has accumulated on it \citep{Deuflhard2002,Hairer87,Hairer1996,Butcher2008}.

Classically, the error of a numerical solver is quantified in terms of the worst case error. However, in applications where a numerical solution is sought as a component of a larger statistical inference problem (see, e.g., \citealt{Matsuda2019,Kersting2020}), it is desirable that the error can be quantified with the same semantic, that is to say, \emph{probabilistically} \citep{Hennig2015,Oates2019a}. Hence the recent endeavour to develop probabilistic ODE solvers.

Probabilistic ODE solvers can roughly be divided into two classes, sampling based solvers and deterministic solvers. The former class includes classical ODE solvers that are stochastically perturbed \citep{Teymur2016,Conrad2017,Teymur2018a,Abdulle2020,Lie2019}, solvers that approximately sample from a Bayesian inference problem \citep{Tronarp2019c}, and solvers that perform Gaussian process regression on stochastically generated data \citep{Chkrebtii2016}. Deterministic solvers formulate the problem as a Gaussian process regression problem, either with a data generation mechanism \citep{Skilling1991,Hennig2014,Schober2014,Kersting2016,Magnani2017,Schober2019} or by attempting to constrain the estimate to satisfy the ODE on the mesh \citep{Tronarp2019c,John2019}. For computational reasons it is fruitful to select the Gaussian process prior to be Markovian \citep{Kersting2016,Magnani2017,Schober2019,Tronarp2019c}, as this reduces cost of inference from $O(N^3)$ to $O(N)$ \citep{Sarkka2013a,Hartikainen2010}. Due to the connection between inference with Gauss--Markov processes priors and spline interpolation \citep{Kimeldorf1970,Weinert1974,Sidhu1979}, the Gaussian process regression approaches are intimately connected with the spline approach to ODEs \citep{Schumaker1982,Wahba1973}. Convergence analysis for the deterministic solvers has been initiated, but the theory is as of yet not complete \citep{Kersting2018}.

The formal notion of Bayesian solvers was defined by \cite{Cockayne2019a}. Under particular conditions on the vector field, the solvers of \cite{Kersting2016,Magnani2017,Schober2019,Tronarp2019c} produce the exact posterior, if in addition a smoothing recursion is implemented, which corresponds to solving the batch problem as posed by \cite{John2019}. In some cases, the exact Bayesian solution can also be obtained by exploiting Lie theory \citep{Wang2018}.

In this paper, the Bayesian formalism of \cite{Cockayne2019a} is adopted for probabilistic solvers and priors of Gauss--Markov type are considered. However, rather than the exact posterior, the maximum a posteriori (MAP) estimate is studied. Many of the aforementioned Gaussian inference approaches are related to the MAP estimate. Due to the Gauss--Markov prior, the MAP estimate can be computed efficiently by the iterated extended Kalman smoother \citep{Bell1994}. Furthermore, the Gauss--Markov prior corresponds to a reproducing kernel Hilbert space (RKHS) of Sobolev type and the MAP estimate is equivalent to an optimal interpolant in this space. This enables the use of results from scattered data approximation \citep{Arcangeli2007} to establish, under mild conditions, that the MAP estimate converges to the true solution at a high polynomial rate in terms of the fill-distance (or equivalently, the maximum step size).

The rest of the paper is organised as follows. In Section \ref{sec:ssm}, the solution of the ODE \eqref{eq:ode} is formulated as a Bayesian inference problem. In Section \ref{sec:map_estimation}, the associated MAP problem is stated and the iterated extended Kalman smoother for computing it is presented \citep{Bell1994}. In Section \ref{sec:RKHS}, the connection between MAP estimation and optimisation in a certain reproducing kernel Hilbert space is reviewed.  In Section \ref{sec:convergence}, the error of the MAP estimate is analysed, for which polynomial convergence rates in the fill-distance are obtained. These rates are demonstrated in Section \ref{sec:numerical_examples}, and the paper is finally concluded by a discussion  in Section \ref{sec:discussion}.

\subsection{Notation}
Let $\Omega \subset \mathbb{R}$, then for a (weakly) differentiable function $u \colon \Omega \to \mathbb{R}^d$, its (weak) derivative is denoted by $Du$, or sometimes $\dot{u}$. The space of $m$ times continuously differentiable functions from $\Omega$ to $\mathbb{R}^d$ is denoted by $C^m(\Omega,\mathbb{R}^d)$. The space of absolutely continuous functions is denoted by $\mathrm{AC}(\Omega,\mathbb{R}^d)$. The vector valued Lesbegue spaces are denoted by $\mathcal{L}_p(\Omega,\mathbb{R}^d)$ and the related Sobolev spaces of $m$ times weakly differentiable functions are denoted by $H_p^m(\Omega,\mathbb{R}^d)$, that is, if $u \in H^m_p(\Omega,\mathbb{R}^d)$ then $D^m u \in \mathcal{L}_p(\Omega,\mathbb{R}^d)$. The norm of $y \in \mathcal{L}_p(\Omega,\mathbb{R}^d)$ is given by
\begin{equation*}
\norm{y}_{\mathcal{L}_p(\Omega,\mathbb{R}^d)} = \sum_{i=1}^d \norm{y_i}_{\mathcal{L}_p(\Omega,\mathbb{R})}.
\end{equation*}
If $p = 2$, the equivalent norm
\begin{equation*}
\norm{y}_{\mathcal{L}_p(\Omega,\mathbb{R}^d)} = \sqrt{\sum_{i=1}^d \norm{y_i}_{\mathcal{L}_p(\Omega,\mathbb{R})}^2}
\end{equation*}
is sometimes used. The Sobolev (semi-)norms are given by \citep{Adams2003,Valent2013}
\begin{subequations}
\begin{align*}
\abs{y}_{H_p^\alpha(\Omega,\mathbb{R})} &= \norm{D^\alpha y}_{\mathcal{L}_p(\Omega,\mathbb{R})}, \\
\norm{y}_{H_p^\alpha(\Omega,\mathbb{R})} &= \Big(\sum_{m=1}^\alpha \abs{y}_{H_p^m(\Omega,\mathbb{R})}^p\Big)^{1/p}, \\
\norm{y}_{H_p^\alpha(\Omega,\mathbb{R}^d)} &= \sum_{i=1}^d \norm{y_i}_{H_p^\alpha(\Omega,\mathbb{R})},
\end{align*}
\end{subequations}
and an equivalent norm on $H_p^\alpha(\Omega,\mathbb{R}^d)$ is
\begin{equation*}
\norm{y}'_{H_p^\alpha(\Omega,\mathbb{R}^d)} = \Big(\sum_{i=1}^d \norm{y_i}_{H_p^\alpha(\Omega,\mathbb{R})}^p\Big)^{1/p}.
\end{equation*}
Henceforth the domain and codomain of the function spaces will be omitted unless required for clarity.

For a positive definite matrix $\Sigma$, its symmetric square root is denoted by $\Sigma^{1/2}$, and the associated Mahalanobis norm of a vector $a$ is denoted by $\norm{a}_\Sigma = a^\T \Sigma^{-1} a$.

\section{A Probabilistic State-Space Model}\label{sec:ssm}
The present approach involves defining a probabilistic state-space model, from which the approximate solution to  \eqref{eq:ode} is inferred. This is essentially the same approach as that of \cite{Tronarp2019c}. The class of priors considered is defined in Section \ref{subsec:prior} and the data model is introduced in Section \ref{subsec:data_model}.

\subsection{The Prior}\label{subsec:prior}
Let $\nu$ be a positive integer, the solution of \eqref{eq:ode} is then modelled by a $\nu$-times differentiable stochastic process prior $Y(t)$ with a state-space representation. That is, the stochastic process $X(t)$ defined by
\begin{equation*}
X^\T(t) = \begin{pmatrix} Y^T(t) & D Y^\T(t) & \ldots & D^\nu Y^\T(t) \end{pmatrix}
\end{equation*}
solves a certain stochastic differential equation. Furthermore, let $\{\mathrm{e}_m\}_{m=0}^\nu$ be the canonical basis on $\mathbb{R}^{\nu+1}$ and $\mathrm{I}_d$ is the identity matrix in $\mathbb{R}^{d\times d}$, it is then convenient to define the matrices $\mathrm{E}_m = \mathrm{e}_m \otimes \mathrm{I}_d, \ 0 \leq m \leq \nu$. That is, the $m$th subvector of $X$ is given by
\begin{equation*}
X^m(t) = \mathrm{E}_m^\T X(t) = D^m Y(t), \quad 0 \leq m \leq \nu.
\end{equation*}
Now let $F_m \in \mathbb{R}^{d\times d}$, $0 \leq m \leq \nu$ and, $\Gamma \in \mathbb{R}^{d\times d}$ a positive definite matrix, and define the following differential operator
\begin{equation*}
\mathcal{A} = \Gamma^{-1/2}\Big( \mathrm{I}_d D^{\nu+1}  - \sum_{m=0}^\nu F_m D^m  \Big)
\end{equation*}
and the matrix $F \in \mathbb{R}^{d(\nu+1)\times d(\nu+1)}$ whose non-zero $d\times d$ blocks are given by
\begin{equation*}
F_{ij} =
\begin{cases}
\mathrm{I}_d,\ j = i +1,\ 0 \leq i,j < \nu, \\
F_j, \ i = \nu,\ 0 \leq j \leq \nu.
\end{cases}
\end{equation*}

The class of priors considered herein is then given by
\begin{equation}\label{eq:prior_measure_y}
Y(t) = \mathrm{E}_0^\T \exp(F t) X(0) + \int_0^T G_Y(t,\tau) \dif W(\tau),
\end{equation}
where $W$ is a standard Wiener process onto $\mathbb{R}^d$, $X(0) \sim \mathcal{N}(0,\Sigma(t_0^-))$, and $G_Y$ is the Green's function associated with $\mathcal{A}$ on $\mathbb{T}$ with initial condition $D^m y(t_0) = 0$, $m =0,\ldots,\nu$. The Green's function is given by
\begin{subequations}\label{eq:greens_functions}
\begin{align}
G_Y(t,\tau) &= \mathrm{E}_0^\T G_X(t,\tau), \label{eq:green_y} \\
G_X(t,\tau) &= \theta(t-\tau) \exp( F(t-\tau) ) \mathrm{E}_\nu \Gamma^{1/2},
\end{align}
\end{subequations}
where $\theta$ is Heaviside's step function.
By construction, \eqref{eq:prior_measure_y} has a state-space representation, which is given by the following stochastic differential equation \citep{Oksendal2003}
\begin{equation}\label{eq:prior_measure_x}
\dif X(t) = FX(t)\dif t + \mathrm{E}_\nu \Gamma^{1/2} \dif W(t), \ X(0) \sim \mathcal{N}(0,\Sigma(t_0^-)),
\end{equation}
where $X$ takes values in $\mathbb{R}^{d(\nu+1)}$ and the $m$th sub-vector of $X$ is given by $X^m = D^m Y$ and takes values in $\mathbb{R}^d$ for $0 \leq m \leq \nu$. The transition densities for $X$ are given by \citep{Sarkka2019}
\begin{equation}\label{eq:discrete_time_model}
X(t + h) \mid X(t) \sim \mathcal{N}( A(h)X(t), Q(h)),
\end{equation}
where $\mathcal{N}(\mu,\Sigma)$ denotes the Normal distribution with mean and covariance $\mu$ and $\Sigma$, respectively, and
\begin{subequations}\label{eq:discrete_time_parameters}
\begin{align}
A(h) &=  \exp ( Fh ),\\
Q(h) &= \int_0^T G_X(h,\tau)G_X^\T(h,\tau) \dif \tau. \label{eq:discrete_time_process_noise}
\end{align}
\end{subequations}

Note that the integrand in \eqref{eq:discrete_time_process_noise} has limited support, that is, the effective interval of integration is $[0,h]$. These parameters can practically be computed via the matrix fraction decomposition method \citep{Sarkka2019}. Details are given in Appendix \ref{app:discretisation}.

\subsubsection{The Selection of Prior}\label{subsec:prior_selection}
While $\nu$ determines the smoothness of the prior, the actual estimator will be of smoothness $\nu+1$ (see Section \ref{sec:RKHS}) and the convergence results of Section \ref{sec:convergence} pertain to the case when the solution is of smoothness $\nu+1$ as well. Consequently, if it is known that the solution is of smoothness $\alpha \geq 2$ then setting $\nu = \alpha - 1$ ensures the present convergence guarantees are in effect. Though it is likely convergence rates can be obtained for priors that are ``too smooth'' as well (see \citealt{Kanagawa2020a} for such results pertaining to numerical integration).

Once the degree of smoothness $\nu$ has been selected, the parameters $\Sigma(t_0^-)$, $\{F_m\}_{m=0}^\nu$, and $\Gamma$ need to be selected. Some common sub-classes of \eqref{eq:prior_measure_y} are listed below.
\begin{itemize}
\item (Released $\nu$ times integrated Wiener process onto $\mathbb{R}^d$). The process $Y$ is a  $\nu$ times integrated Wiener process if $F_m = 0, \ m = 1,\ldots,\nu$. The parameters $\Sigma(t_0^-)$ and $\Gamma$ are free. Though it is advisable to set $\Gamma = \sigma^2 \mathrm{I}_d$  for some scalar $\sigma^2$. In this case $\sigma^2$ can be fit (estimated) to the particular ODE being solved (see Appendix \ref{app:uncertainty_calibration}). This class of processes is denoted by $Y \sim \mathrm{IWP}(\Gamma,\nu)$.
\item ($\nu$ times integrated Ornstein--Uhlenbeck process onto $\mathbb{R}^d$). The process $Y$ is a  $\nu$ times integrated Ornstein--Uhlenbeck process if $F_m = 0, \ m = 1,\ldots,\nu-1$. The parameters $\Sigma(t_0^-)$, $F_\nu$, and $\Gamma$ are free. As with $\mathrm{IWP}(\Gamma,\nu)$, it is advisable to set $\Gamma = \sigma^2 \mathrm{I}_d$. These processes are denoted by $Y \sim \mathrm{IOUP}(F_\nu,\Gamma,\nu)$.
\item (Mate\'rn processes of smoothness $\nu$ onto $\mathbb{R}$). If $d = 1$ then $Y$ is a Mate\'rn process of smoothness $\nu$ if (cf. \citealt{Hartikainen2010})
\begin{subequations}
\begin{align*}
F_m &= -\binom{\nu + 1}{m} \lambda^{\nu+1 - m},\quad  m = 0,\ldots,\nu,  \\
\Gamma &= 2 \sigma^2 \lambda^{2\nu+1},
\end{align*}
\end{subequations}
for some $\lambda,\sigma^2 > 0$, and $\Sigma(t_0^-)$ is set to the stationary covariance matrix of the resulting $X$ process. If $d > 1$ then each coordinate of the solution can be modelled by an individual Mate\'rn process.
\end{itemize}

\begin{remark}
Many popular choices of Gaussian processes not mentioned here also have state-space representations or can be approximated by a state-space model \citep{Karvonen2016,Tronarp2018d,Hartikainen2010,Solin2014}. A notable example is Gaussian processes with squared exponential kernel \citep{Hartikainen2010}. See Chapter 12 of \cite{Sarkka2019}, for a thorough exposition.
\end{remark}

\subsection{The Data Model}\label{subsec:data_model}
For the Bayesian formulation of probabilistic numerical methods, the data model is defined in terms of an \emph{information operator} \citep{Cockayne2019a}. In this paper, the information operator is given by
\begin{equation}\label{eq:information_operator}
\mathcal{Z} = D - \mathcal{S}_f,
\end{equation}
where $\mathcal{S}_f$ is the Nemytsky operator associated with the vector field $f$ \citep{Marcus1973},\footnote{Nemytsky operators are also known as composition operators and superposition operators.} that is,
\begin{equation}\label{eq:nemytsky_operator_def}
\mathcal{S}_f[y](t) = f(t,y(t)).
\end{equation}
Clearly, $\mathcal{Z}$ maps the solution of \eqref{eq:ode} to a known quantity, the zero function. Consequently, inferring $Y$ reduces to conditioning on
\begin{equation*}
\mathcal{Z}[Y](t) = 0, \quad t \in \mathbb{T}_N.
\end{equation*}
The function $\mathcal{Z}[Y](t)$ can be expressed in simpler terms by use of the process $X$. That is, define the function
\begin{equation*}
z(t,x) \coloneqq x^{1} - f(t,x^{0}),
\end{equation*}
then $\mathcal{Z}[Y](t) = \mathcal{S}_z[X](t) = z(t,X(t))$. Furthermore, it is necessary to account for the initial condition, $X^0(0) = y_0$, and with small additional cost the initial condition of the derivative can also be enforced $X^1(0) = f(0,y_0)$.

\begin{remark}
The properties of the Nemytsky operator are entirely determined by the vector field $f$. For instance, if $f \in C^\alpha(\mathbb{T}\times \mathbb{R}^d, \mathbb{R}^d)$, $\alpha \geq 0$, then $\mathcal{S}_f$ maps $C^\nu(\mathbb{T},\mathbb{R}^d)$ to $C^{\min (\nu,\alpha)}(\mathbb{T},\mathbb{R})$, which is fine for present purposes. However, in the subsequent convergence analysis it is more appropriate to view $\mathcal{S}_f$ (and $\mathcal{Z}$) as a mapping between different Sobolev spaces, which is possible if $\alpha$ is sufficiently large \citep{Valent2013}.
\end{remark}

\section{Maximum A Posteriori Estimation}\label{sec:map_estimation}
The MAP estimate for $Y$, or equivalently for $X$, is in view of \eqref{eq:discrete_time_model} the solution to the optimisation problem
\begin{subequations}\label{eq:map_estimate_x}
\begin{alignat}{2}
&\!\min_{x(t_{0:N})}  & \mathcal{V}(x(t_{0:N}))  \\
&\text{subject to} &      & \mathrm{E}_0^\T x(t_0) - y_0 = 0, \label{eq:initial_value_constraint}\\
&                  &      & \mathrm{E}_1^\T x(t_0) - f(t_0,y_0) = 0,\label{eq:initial_derivative_constraint}\\
&                  &      &  z(t_n,x(t_n)) = 0,\ n = 1,\ldots,N, \label{eq:information_constraint}
\end{alignat}
\end{subequations}
where $h_n = t_n - t_{n-1}$ is the step size sequence and $\mathcal{V}$ is up to a constant, the negative log-density
\begin{equation}
\begin{split}
\mathcal{V}(x(t_{0:N})) &= \frac{1}{2}\sum_{n=1}^N \norm{x(t_n) - A(h_n)x(t_{n-1})}_{Q(h_n)}^2\\
&\quad + \frac{1}{2}\norm{x(t_0)}_{\Sigma(t_0^-)}^2  .
\end{split}
\end{equation}
If the vector field is affine in $y$, then the MAP estimate and the full posterior can be computed exactly via Gaussian filtering and smoothing \citep{Sarkka2013}. However, when this is not the case then, for instance, a Gauss--Newton method can be used, which can be efficiently implemented by Gaussian filtering and smoothing as well. This method for MAP estimation is known as the \emph{iterated extended Kalman smoother} \citep{Bell1994}.

\subsection{Inference with Affine Vector Fields}\label{subsec:affine_vector_field}
If the vector field is affine
\begin{equation*}
f(t,y) = \Lambda(t)y + \zeta(t),
\end{equation*}
then the information operator reduces to
\begin{equation*}
z(t,x) = x^1 - \Lambda(t) x^0 - \zeta(t),
\end{equation*}
and the inference problem reduces to Gaussian process regression \citep{Rasmussen2006} with a linear combination of function and derivative observations.
In the spline literature this is known as (extended) Hermite--Birkhoff data \citep{Sidhu1979}. In this case, the inference problem can be solved exactly with Gaussian filtering and smoothing \citep{Kalman1960,KalmanBucy1961,RauchTungStriebel1965,Sarkka2013,Sarkka2019}. Define the information sets
\begin{subequations}
\begin{align*}
\mathscr{Z}(t) = \{ z(\tau,X(\tau)) = 0 \colon \tau \in \mathbb{T}_N, \ \tau \leq t \}, \\
\mathscr{Z}(t^-) = \{ z(\tau,X(\tau)) = 0 \colon \tau \in \mathbb{T}_N, \ \tau < t\}.
\end{align*}
\end{subequations}
In Gaussian filtering and smoothing, only the mean and covariance matrix of $X(t)$ are tracked. The mean and covariance at time $t$, conditioned on $\mathscr{Z}(t)$ are denoted by $\mu_F(t)$ and $\Sigma_F(t)$, respectively, and $\mu_F(t^-)$ and $\Sigma_F(t^-)$ correspond to conditioning on $\mathscr{Z}(t^-)$, which are limits from the left. The mean and covariance conditioned on $\mathscr{Z}(T)$ at time $t$ are denoted by $\mu_S(t)$ and $\Sigma_S(t)$, respectively.

Before starting the filtering and smoothing recursions, the process $X$ needs to be conditioned on the initial values
\begin{equation*}
\mathrm{E}_0^\T X(0) = y_0, \quad \mathrm{E}_1^\T X(0) = f(t_0,y_0).
\end{equation*}
This is can be done by a Kalman update
\begin{subequations}
\begin{align}
C^\T(t_0) &= \begin{pmatrix} \mathrm{E}_0 & \mathrm{E}_1 \end{pmatrix},\\
S(t_0) &= C(t_0) \Sigma(t_0^-) C^\T(t_0), \\
K(t_0) &= \Sigma(t_0^-) C^\T(t_0) S^{-1}(t_0), \\
\mu_F(t_0) &= K(t_0) \begin{pmatrix}  y_0 \\ f(t_0,y_0)\end{pmatrix}, \\
\Sigma_F(t_0) &= \Sigma(t_0^-) - K(t_0) S(t_0) K^\T(t_0).
\end{align}
\end{subequations}
The filtering  mean and covariance on the mesh evolve as
\begin{subequations}\label{eq:dt_prediction}
\begin{align}
\mu_F(t_n^-)  &= A(h_n) \mu_F(t_{n-1}), \\
\Sigma_F(t_n^-) &= A(h_n) \Sigma_F(t_{n-1}) A^\T(h_n) + Q(h_n). \label{eq:dt_covariance_prediction}
\end{align}
\end{subequations}
The prediction moments at $t \in \mathbb{T}_N$ are then corrected according to the Kalman update
\begin{subequations}\label{eq:kalman_update}
\begin{align}
C(t_n) &= \mathrm{E}_1^\T - \Lambda(t_n)\mathrm{E}_0^\T,  \label{eq:measurement_matrix}\\
S(t_n) &= C(t_n) \Sigma_F(t_n^-) C^\T(t_n), \label{eq:marginal_variance}\\
K(t_n) &= \Sigma_F(t_n^-) C^\T(t_n) S^{-1}(t_n), \label{eq:kalman_gain}\\
\mu_F(t_n) &= \mu_F(t_n^-) + K(t_n)\big( \zeta(t_n) - C(t_n)\mu_F(t_n^-) \big), \label{eq:filter_mean_update} \\
\Sigma_F(t_n) &= \Sigma_F(t_n^-) - K(t_n) S(t_n) K^\T(t_n). \label{eq:filter_covariance_update}
\end{align}
\end{subequations}
On the mesh $\mathbb{T}_N$, the smoothing moments are given by
\begin{subequations}\label{eq:dt_smoothing}
\begin{align}
G(t_n) &= \Sigma_F(t_n) A^\T(h_{n+1}) \Sigma_F^{-1}(t_{n+1}^-), \label{eq:dt_smoothing_gain}\\
\mu_S(t_n) &= \mu_F(t_n) + G(t_n)(\mu_S(t_{n+1})  - \mu_F(t_{n+1}^-)), \\
\begin{split}
\Sigma_S(t_n) &=  G(t_n)\big( \Sigma_S(t_{n+1}) - \Sigma_F(t_{n+1}^-) \big)G^\T(t_n) \label{eq:dt_smoothing_covariance}\\
&\quad + \Sigma_F(t_n),
\end{split}
\end{align}
\end{subequations}
with terminal conditions $\mu_S(t_N) = \mu_F(t_N)$, and $\Sigma_S(t_N) = \Sigma_F(t_N)$. The MAP estimate and its derivatives, on the mesh, are then given by
\begin{equation*}
D^m \hat{y}(t) = \mathrm{E}_m^\T \mu_S(t), \quad t \in \mathbb{T}_N, \ m = 0,\ldots,\nu.
\end{equation*}

\begin{remark}
The filtering covariance can be written as
\begin{equation*}
\begin{split}
\Sigma_F(t_n) &= \Sigma_F^{1/2}(t_n^-)\Big( \mathrm{I} -  \operatorname{Proj}\Big(\Sigma_F^{1/2}(t_n^-) C^\T(t_n) \Big) \Big) \\
&\quad \times \Sigma_F^{1/2}(t_n^-),
\end{split}
\end{equation*}
where $\operatorname{Proj}(A) =  A (A^\T A)^{-1} A^\T$ is the projection matrix onto the column space of $A$. By \eqref{eq:measurement_matrix} and $\Sigma_F(t_n^-) \succ 0$, the dimension of the column space of $\Sigma_F^{1/2}(t_n^-) C^\T(t_n)$ is readily seen to be $d$. That is, the null-space of $\Sigma_F(t_n)$ is of dimension $d$. By \eqref{eq:dt_smoothing_gain} and \eqref{eq:dt_smoothing_covariance}, it is also seen that $\Sigma_F(t_n)$ and $\Sigma_S(t_n)$ share null-space. This rank deficiency is not a problem in principle since the addition of $Q(h_n)$ in \eqref{eq:dt_covariance_prediction} ensures $\Sigma_F(t_n^-)$ is of full rank. However, in practice $Q(h_n)$ may become numerically singular for very small step sizes.
\end{remark}

While Gaussian filtering and smoothing only provides the posterior for affine vector fields, it forms the template for nonlinear problems as well. That is, the vector field is replaced by an affine approximation \citep{Schober2019,Tronarp2019c,Magnani2017}. The iterated extended Kalman smoother approach for doing so is discussed in the following.

\subsection{The Iterated Extended Kalman Smoother}\label{subsec:approximate_gaussian_inference}
For non-affine vector fields, only the update becomes intractable. Approximation methods involve different ways of approximating the vector field with an affine function
\begin{equation*}
f(t,y) \approx \hat{\Lambda}(t)y + \hat{\zeta}(t),
\end{equation*}
whereafter approximate filter means and covariances are obtained by plugging $\hat{\Lambda}$ and $\hat{\zeta}$ into \eqref{eq:kalman_update}. The iterated extended Kalman smoother linearises $f$ around the smoothing mean in an iterative fashion. That is,
\begin{subequations}\label{eq:ieks_linearisation}
\begin{align}
\hat{\Lambda}^l(t_n) &= J_f(t_n,\mathrm{E}_0^\T \mu_S^l(t_n)), \\
\hat{\zeta}^l(t_n) &= f(t_n,\mathrm{E}_0^\T \mu_S^l(t_n)) - J_f(t_n,\mathrm{E}_0^\T \mu_S^l(t_n))\mathrm{E}_0^\T \mu_S^l(t_n).
\end{align}
\end{subequations}
The smoothing mean and covariance at iteration $l+1$, $\mu_S^{l+1}(t)$ and $\Sigma_S^{l+1}(t)$, are then obtained by running the filter and smoother with the parameters in \eqref{eq:ieks_linearisation}.

As mentioned, this is just the Gauss--Newton algorithm for the maximum a posteriori trajectory \citep{Bell1994}, and it can be shown that, under some conditions on the Jacobian of the vector field, the fixed-point is at least a local optimum to the MAP problem \eqref{eq:map_estimate_x} \citep{Knoth1989}. Moreover, the IEKS is just a clever implementation of the method of \cite{John2019} whenever the prior process has a state-space representation.

\subsubsection{Initialisation}
In order to implement the IEKS, a method of initialisation needs to be devised. Fortunately, there exists non-iterative Gaussian solvers for this purpose  \citep{Schober2019,Tronarp2019c}. These methods also employ Taylor series expansions to construct an affine approximation of the vector field. These methods select an expansion point at the prediction estimates $\mathrm{E}_0^\T \mu_F(t_n^-)$, and consequently the affine approximation can be constructed on the fly within the filter recursion. The affine approximation due to a zeroth order expansion gives the parameters \citep{Schober2019}
\begin{subequations}\label{eq:eks0_linearisation}
\begin{align}
\hat{\Lambda}(t_n) &= 0, \\
\hat{\zeta}(t_n) &= f(t_n,\mathrm{E}_0^\T \mu_F(t_n)),
\end{align}
\end{subequations}
and will be referred to as the zeroth order extended Kalman smoother (EKS0). The affine expansion due to a first order expansion \citep{Tronarp2019c} gives the parameters
\begin{subequations}\label{eq:eks1_linearisation}
\begin{align}
\hat{\Lambda}(t_n) &= J_f(t_n,\mathrm{E}_0^\T \mu_F(t_n)), \\
\hat{\zeta}(t_n) &= f(t_n,\mathrm{E}_0^\T \mu_F(t_n)) - J_f(t_n,\mathrm{E}_0^\T \mu_F(t_n))\mathrm{E}_0^\T \mu_F(t_n),
\end{align}
\end{subequations}
and will be referred to as the first order extended Kalman smoother (EKS1). Note that EKS0 computes the exact MAP estimate in the event that the vector field $f$ is constant in $y$, while EKS1 computes the exact MAP estimate in the more general case when $f$ is affine in $y$. Consequently, as EKS1 makes a more accurate approximation of the vector field than EKS0, it is expected to perform better.

Furthermore, as Jacobians of the vector field will be computed in the IEKS iteration anyway, the preferred method of initialisation is EKS1, which is the method used in the subsequent experiments.

\subsubsection{Computational Complexity}
The computational complexity of a Gaussian filtering and smoothing method for approximating the solution of \eqref{eq:ode} can be separated into two parts (i) the cost of linearisation and (ii) the cost of inference. The cost of inference here refers to the computatational cost associated with the filtering and smoothing recursion, which for affine systems is $\mathcal{O}(N d^3\nu^3 )$. Since EKS0 and EKS1 perform the filtering and smoothing recursion once, their cost of inference is the same, $\mathcal{O}(N d^3 \nu^3 )$. Furthermore, the linearisation cost of EKS0 amounts to $N+1$ evaluations of $f$ and no evaluations of $J_f$, while EKS1 evaluates $f$ $N+1$ times and $J_f$ $N$ times, respectively. Assuming IEKS is initialised by EKS1 using $L$ iterations, including the initialisation, then the cost of inference is  $\mathcal{O}(LN d^3\nu^3 )$, $f$ is evaluated $LN + 1$ times, and $J_f$ is evaluated $LN$ times. A summary of the computational costs is given in Table \ref{tab:computational_complexity}.

\begin{table}
\caption{Comparison of the computational cost between EKS0, EKS1, and IEKS, where $L$ denotes the total number of iterations for IEKS and it is assumed that IEKS is initialised by EKS1.}\label{tab:computational_complexity}
\begin{tabular}{|c|c|c|c|}
\hline
  & EKS0 & EKS1 & IEKS \\ \hline
Inference cost & $\mathcal{O}(N d^3\nu^3 )$ & $\mathcal{O}(N d^3\nu^3 )$ & $\mathcal{O}(LN d^3\nu^3 )$\\ \hline
\# Evals of $f$ & $N+1$ & $N+1$ & $LN+1$ \\ \hline
\# Evals of $J_f$ & 0 & $N$ & $LN$ \\ \hline
\end{tabular}
\end{table}

\section{Interpolation in Reproducing Kernel Hilbert Space}\label{sec:RKHS}
The correspondence between inference in stochastic processes and optimisation in reproducing kernel Hilbert spaces is well known \citep{Kimeldorf1970,Weinert1974,Sidhu1979}. This correspondence is indeed present in the current setting as well, in the sense that MAP estimation as discussed in Section \ref{sec:map_estimation} is equivalent to optimisation in the reproducing kernel Hilbert space (RKHS) associated with $Y$ and $X$ (see \citealt[Proposition 3.6]{Kanagawa2018a} for standard Gaussian process regression). The purpose of this section is thus to establish that the RKHS associated with $Y$, which establishes what function space the MAP estimator lie in. Furthermore, it is shown that the MAP estimate is equivalent to an interpolation problem in this RKHS, which implies properties on its norm. These results will then be used in the convergence analysis of the MAP estimate in Section \ref{sec:convergence}.

\subsection{The Reproducing Kernel Hilbert Space of the Prior}
The RKHS of the Wiener process with domain $\mathbb{T}$ and codomain $\mathbb{R}^d$ is the set (cf. \citealt{VanDerVaart2008}, section 10)
\begin{equation*}
\mathbb{W}_0 = \{ w \colon w \in \mathrm{AC}(\mathbb{T},\mathbb{R}^d),\ w(0) = 0,\ \dot{w} \in \mathcal{L}_2(\mathbb{T},\mathbb{R}^d) \},
\end{equation*}
with inner product given by
\begin{equation*}
\langle w , w' \rangle_{\mathbb{W}_0} = \int_0^T \dot{w}^\T(\tau) \dot{w}'(\tau) \dif \tau = \langle \dot{w}, \dot{w}' \rangle_{\mathcal{L}_2}.
\end{equation*}
Let $\mathbb{Y}^{\nu+1}$ denote the reproducing kernel Hilbert space associated with the prior process $Y$ as defined by \eqref{eq:prior_measure_y}, then $\mathbb{Y}^{\nu+1}$ is given by the image of the operator \cite[lemmas 7.1, 8.1, and 9.1]{VanDerVaart2008}
\begin{equation*}
\mathcal{T}(\vec{y}_0,\dot{w}_y)(t) = \mathrm{E}_0^\T \exp (F t) \vec{y}_0 + \int_0^T G_Y(t,\tau) \dot{w}_y(\tau) \dif \tau,
\end{equation*}
where $\vec{y}_0 \in \mathbb{R}^{d(\nu+1)}$ and $\dot{w}_y \in \mathcal{L}_2(\mathbb{T},\mathbb{R}^d)$. That is,
\begin{equation*}
\begin{split}
&\mathbb{Y}^{\nu+1} = \\
&\quad\{ y \colon y = \mathcal{T}(\vec{y}_0,\dot{w}_y), \ \vec{y}_0 \in \mathbb{R}^{d(\nu+1)}, \ \dot{w}_y \in \mathcal{L}_2(\mathbb{T},\mathbb{R}^d) \},
\end{split}
\end{equation*}
and inner product is given by
\begin{equation*}
\begin{split}
\langle y, y' \rangle_{\mathbb{Y}^{\nu+1}}  &=  \vec{y}_0^\T \Sigma^{-1}(t_0^-) \vec{y}_0' + \langle \mathcal{A}y, \mathcal{A} y' \rangle_{\mathcal{L}_2}\\
&=     \vec{y}_0^\T \Sigma^{-1}(t_0^-) \vec{y}_0' + \langle \dot{w}_y, \dot{w}_{y'} \rangle_{\mathcal{L}_2}.
\end{split}
\end{equation*}

\begin{remark}
For an element $y \in \mathbb{Y}^{\nu+1}$, the vector $\vec{y}_0$ contains the initial values for $D^m y(t)$, $m = 0,\ldots,\nu$, in similarity with the vector $X(0)$ in the definition of the prior process $Y$ in \eqref{eq:prior_measure_y}. That is, $\vec{y}_0$ should not be confused with the initial value of \eqref{eq:ode}.
\end{remark}

Since $G_Y$ is the Green's function of a differential operator of order $\nu+1$ with smooth coefficients, $\mathbb{Y}^{\nu+1}$ can be identified as follows. A function $y \colon \mathbb{T} \to \mathbb{R}^d$ is in $\mathbb{Y}^{\nu+1}$ if and only if
\begin{subequations}\label{eq:rkhs_def}
\begin{align}
D^my &\in \mathrm{AC}(\mathbb{T},\mathbb{R}^d),\ m = 0,\ldots,\nu, \\
D^{\nu+1}y &\in \mathcal{L}_2(\mathbb{T},\mathbb{R}^d).
\end{align}
\end{subequations}
Hence by similar arguments as for the released $\nu$ times integrated Wiener process, Proposition \ref{prop:rkhs_equals_sobolev} holds (see proposition 2.6.24 and remark 2.6.25 of \citealt{Gine2016}).

\begin{proposition}\label{prop:rkhs_equals_sobolev}
The reproducing kernel Hilbert space $\mathbb{Y}^{\nu+1}$ as a set is equal to the Sobolev space $H_2^{\nu+1}(\mathbb{T},\mathbb{R}^d)$ and their norms are equivalent.
\end{proposition}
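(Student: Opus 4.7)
The plan is to establish the set equality and the norm equivalence separately, using the characterisation \eqref{eq:rkhs_def} as the bridge between the RKHS definition via $\mathcal{T}$ and the Sobolev space.

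For the set equality, I would rely on the standard description of Sobolev spaces on a bounded interval: a function $y \colon \mathbb{T} \to \mathbb{R}^d$ lies in $H_2^{\nu+1}(\mathbb{T},\mathbb{R}^d)$ if and only if $D^m y \in \mathrm{AC}(\mathbb{T},\mathbb{R}^d)$ for $m=0,\ldots,\nu$ and $D^{\nu+1}y \in \mathcal{L}_2(\mathbb{T},\mathbb{R}^d)$ (this is, e.g., the content of Proposition 2.6.24 in Gin\'e and Nickl, cited in the remark just before the proposition). Condition \eqref{eq:rkhs_def} was derived precisely because $G_Y$ is the Green's function of $\mathcal{A}$, so elements of $\mathbb{Y}^{\nu+1}$ are exactly the functions satisfying \eqref{eq:rkhs_def}. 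Hence $\mathbb{Y}^{\nu+1} = H_2^{\nu+1}(\mathbb{T},\mathbb{R}^d)$ as sets.

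For the norm equivalence I would prove both inequalities. In one direction, the differential operator $\mathcal{A} = \Gamma^{-1/2}(\mathrm{I}_d D^{\nu+1} - \sum_{m=0}^\nu F_m D^m)$ has constant coefficient blocks, so $\|\mathcal{A} y\|_{\mathcal{L}_2} \lesssim \|y\|_{H_2^{\nu+1}}$. The initial-value contribution $\vec{y}_0^\T \Sigma^{-1}(t_0^-)\vec{y}_0$ is bounded by $\|\Sigma^{-1}(t_0^-)\|$ times $\sum_{m=0}^\nu |D^m y(t_0)|^2$, and each pointwise term is controlled by $\|D^m y\|_{H_2^1} \lesssim \|y\|_{H_2^{\nu+1}}$ via the one-dimensional Sobolev embedding $H_2^1(\mathbb{T}) \hookrightarrow C(\mathbb{T})$. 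Putting these together yields $\|y\|_{\mathbb{Y}^{\nu+1}} \lesssim \|y\|_{H_2^{\nu+1}}$.

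The reverse inequality is the main obstacle, since $\|\mathcal{A}y\|_{\mathcal{L}_2}$ by itself only controls a particular combination of derivatives. My plan is to invert this relation iteratively. Rearranging $\Gamma^{1/2}\mathcal{A}y = D^{\nu+1}y - \sum_{m=0}^\nu F_m D^m y$ gives
\begin{equation*}
\|D^{\nu+1}y\|_{\mathcal{L}_2} \leq \|\Gamma^{1/2}\|\,\|\mathcal{A}y\|_{\mathcal{L}_2} + \sum_{m=0}^\nu \|F_m\|\,\|D^m y\|_{\mathcal{L}_2},
\end{equation*}
and the lower-order derivatives can in turn be bounded by the initial values and $\|D^{\nu+1}y\|_{\mathcal{L}_2}$ through iterated integration of $D^m y(t) = D^m y(t_0) + \int_{t_0}^t D^{m+1}y(\tau)\dif \tau$ together with $|\mathbb{T}|<\infty$ and Cauchy--Schwarz. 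A Gr\"onwall or bootstrap argument on the compact interval closes the loop and yields $\|y\|_{H_2^{\nu+1}} \lesssim |\vec{y}_0| + \|\mathcal{A} y\|_{\mathcal{L}_2} \lesssim \|y\|_{\mathbb{Y}^{\nu+1}}$, where the last inequality uses that $\Sigma(t_0^-)$ is a fixed positive definite matrix. The most delicate bookkeeping is ensuring that the constants arising from the inversion of the triangular system of derivative estimates are uniform in $y$; after that, the equivalence of norms follows.
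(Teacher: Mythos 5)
Your argument is correct, and it is considerably more explicit than what the paper offers: the paper does not prove Proposition~\ref{prop:rkhs_equals_sobolev} at all, but reduces it to the released integrated Wiener process case via the characterisation \eqref{eq:rkhs_def} and a citation to Gin\'e and Nickl (Proposition 2.6.24 and Remark 2.6.25), leaving the extension to general constant-coefficient $\mathcal{A}$ as ``similar arguments.'' Your set-equality step coincides with the paper's route. For the norm equivalence you instead give a direct argument valid for arbitrary $F_0,\ldots,F_\nu$, which is exactly the content the citation hides. One point deserves care: the displayed inequality
\begin{equation*}
\norm{D^{\nu+1}y}_{\mathcal{L}_2} \leq \norm[0]{\Gamma^{1/2}}\,\norm{\mathcal{A}y}_{\mathcal{L}_2} + \sum_{m=0}^\nu \norm{F_m}\,\norm{D^m y}_{\mathcal{L}_2}
\end{equation*}
combined with the naive iteration $\norm{D^m y}_{\mathcal{L}_2} \lesssim \abs{D^m y(t_0)} + T\norm{D^{m+1}y}_{\mathcal{L}_2}$ reintroduces $\norm{D^{\nu+1}y}_{\mathcal{L}_2}$ on the right with a constant that need not be small, so the loop does not close by substitution alone. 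The clean way to execute the Gr\"onwall step you invoke is to pass to the first-order system $\dot{X} = FX + \mathrm{E}_\nu \Gamma^{1/2}\mathcal{A}y$ for $X = (y, Dy, \ldots, D^\nu y)$ and use variation of constants, which bounds $\norm{D^m y}_{\mathcal{L}_\infty}$ for $m \leq \nu$ by $C(T,F)\bigl(\abs{\vec{y}_0} + \norm{\mathcal{A}y}_{\mathcal{L}_2}\bigr)$ \emph{before} the higher-order relation is used; substituting these bounds back then controls $\norm{D^{\nu+1}y}_{\mathcal{L}_2}$ without circularity. With that reading of your Gr\"onwall step, the proof is complete and self-contained, at the price of tracking constants that the paper's citation-based reduction avoids.
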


The reproducing kernel of $\mathbb{Y}^{\nu+1}$ is given by (cf. \citealt{Sidhu1979})
\begin{equation*}
\begin{split}
R(t,s) &= \mathrm{E}_0^\T \exp(Ft) \Sigma(t_0^-) \exp(F^\T s) \mathrm{E}_0 \\
&\quad + \int_0^T G_Y(t,\tau)G_Y^\T(s,\tau) \dif \tau,
\end{split}
\end{equation*}
which is also the covariance function of $Y$. The linear functionals
\begin{equation*}
y \mapsto v^\T D^m y(s), \quad v \in \mathbb{R}^d, \ t \in \mathbb{T}, \ m=0,\ldots,\nu,
\end{equation*}
are continuous and their representers are given by
\begin{subequations}
\begin{align*}
\eta_s^{m,v} &= R^{(0,m)}(t,s)v, \\
\langle \eta_s^{m,v}, y\rangle_{\mathbb{Y}^{\nu+1}} &= v^\T D^m y(s),
\end{align*}
\end{subequations}
where $R^{(m,k)}$ denotes $R$ differentiated $m$ and $k$ times with respect to the first and second arguments, respectively. Furthermore, define the matrix
\begin{equation*}
\eta_s^m = \begin{pmatrix} \eta_s^{m,\mathrm{e}_1} & \ldots & \eta_s^{m,\mathrm{e}_d} \end{pmatrix},
\end{equation*}
and with notation overloaded in the obvious way, the following identities hold
\begin{subequations}
\begin{align*}
D^m y(t) &= \langle \eta_t^m , y \rangle_{\mathbb{Y}^{\nu+1}}, \\
R^{(m,k)}(t,s) &= \langle \eta_t^m, \eta_s^k \rangle_{\mathbb{Y}^{\nu+1}}.
\end{align*}
\end{subequations}
Since there is a one-to-one correspondence between the processes $Y$ and $X$, the RKHS associated with $X$ is isometrically isomorphic to $\mathbb{Y}^{\nu+1}$, and it is given by
\begin{equation*}
\mathbb{X}^{\nu+1} = \{x \colon x^0 \in \mathbb{Y}^{\nu+1}, \ x^m = D^m x^0,\ m=1,\ldots,\nu\},
\end{equation*}
where $x^m$ is the $m$th sub-vector of $x$ of dimension $d$. The kernel associated with $\mathbb{X}^{\nu+1}$ is given by
\begin{equation}\label{eq:kernel_x}
\begin{split}
P(t,s) &=  \exp(Ft) \Sigma(t_0^-) \exp(F^\T s)  \\
&\quad + \int_0^T G_X(t,\tau)G_X^\T(s,\tau) \dif \tau,
\end{split}
\end{equation}
and the $d\times d$ blocks of $P$ are given by
\begin{equation*}
P_{m,k}(t,s) = R^{(m,k)}(t,s),
\end{equation*}
and $\psi_s = P(t,s)$ is the representer of evaluation at $s$,
\begin{equation*}
x(s) = \langle \psi_s, x \rangle_{\mathbb{X}^{\nu+1}}.
\end{equation*}
In the following, the short-hands $\mathbb{Y} = \mathbb{Y}^{\nu+1}$ and $\mathbb{X} = \mathbb{X}^{\nu+1}$ are in effect.

\subsection{Nonlinear Kernel Interpolation}
Consider the interpolation problem
\begin{equation}\label{eq:nonlin_kernel_interpolation}
\hat{y} = \mathop{\arg \, \min}_{y \in \mathcal{I}_N} \ \frac{1}{2}\norm{y}_\mathbb{Y}^2,
\end{equation}
where the feasible set is given by
\begin{equation*}
\begin{split}
\mathcal{I}_N &= \{ y \in \mathbb{Y} \colon y(0) = y_0, \   \dot{y}(0) = f(0,y_0)\}\\
&\quad\cap \{ y \in \mathbb{Y} \colon \mathcal{Z}[y](t) = 0, \ t \in \mathbb{T}_N\}.
\end{split}
\end{equation*}
Define the following subspaces of $\mathbb{Y}$
\begin{equation*}
\mathcal{R}_N(m) = \operatorname{span}\big\{ \eta_{t_n}^{l,\mathrm{e}_i}\}_{l = 0, n = 0, i = 1}^{m,N,d}, \quad m \leq \nu+1.
\end{equation*}
Similarly to other situations \citep{Kimeldorf1971,Cox1990,Girosi1995} our optimum can be expanded in a finite sub-space spanned by representers, which is the statement of Proposition \ref{prop:representer}.

\begin{proposition}\label{prop:representer}
The solution to \eqref{eq:nonlin_kernel_interpolation} is contained in $\mathcal{R}_N(1)$.
\end{proposition}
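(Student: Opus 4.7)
The plan is to use the standard representer-theorem argument, suitably adapted to handle the \emph{nonlinear} constraint $\mathcal{Z}[y](t_n) = 0$. The crucial observation is that the constraint $\dot{y}(t_n) = f(t_n, y(t_n))$ depends on $y$ only through the values $y(t_n)$ and $\dot{y}(t_n)$, and these values are precisely the quantities whose representers span $\mathcal{R}_N(1)$.

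First I would orthogonally decompose the Hilbert space as $\mathbb{Y} = \mathcal{R}_N(1) \oplus \mathcal{R}_N(1)^{\perp}$, and write any feasible $y \in \mathcal{I}_N$ as $y = y_{\parallel} + y_{\perp}$ with $y_{\parallel} \in \mathcal{R}_N(1)$ and $y_{\perp} \perp \mathcal{R}_N(1)$. By the reproducing property, for every $n \in \{0, \ldots, N\}$ and $i \in \{1, \ldots, d\}$,
\begin{equation*}
\mathrm{e}_i^\T y_{\perp}(t_n) = \langle \eta_{t_n}^{0,\mathrm{e}_i}, y_{\perp}\rangle_{\mathbb{Y}} = 0, \quad \mathrm{e}_i^\T D y_{\perp}(t_n) = \langle \eta_{t_n}^{1,\mathrm{e}_i}, y_{\perp}\rangle_{\mathbb{Y}} = 0,
\end{equation*}
since each representer lies in $\mathcal{R}_N(1)$. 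Hence $y_{\perp}$ vanishes together with its first derivative on $\mathbb{T}_N$, which gives $y_{\parallel}(t_n) = y(t_n)$ and $D y_{\parallel}(t_n) = D y(t_n)$ for all $n$.

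Next I would verify that $y_{\parallel} \in \mathcal{I}_N$. The initial-value constraints \eqref{eq:initial_value_constraint} and \eqref{eq:initial_derivative_constraint} follow immediately from $y_{\parallel}(t_0) = y(t_0) = y_0$ and $D y_{\parallel}(t_0) = D y(t_0) = f(t_0, y_0)$. For the information constraint \eqref{eq:information_constraint}, I use that $y$ is feasible to write
\begin{equation*}
D y_{\parallel}(t_n) = D y(t_n) = f(t_n, y(t_n)) = f(t_n, y_{\parallel}(t_n)),
\end{equation*}
so that $\mathcal{Z}[y_{\parallel}](t_n) = 0$ for $n = 1, \ldots, N$. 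This is the step where including \emph{both} $l=0$ and $l=1$ representers in the definition of $\mathcal{R}_N(1)$ is essential: without the derivative representers, the orthogonal projection would not preserve $\dot{y}(t_n)$, and the nonlinear constraint would in general fail after projecting.

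Finally I would close the argument by Pythagoras: since $y_{\parallel} \perp y_{\perp}$,
\begin{equation*}
\norm{y}_{\mathbb{Y}}^2 = \norm{y_{\parallel}}_{\mathbb{Y}}^2 + \norm{y_{\perp}}_{\mathbb{Y}}^2 \geq \norm{y_{\parallel}}_{\mathbb{Y}}^2,
\end{equation*}
with equality if and only if $y_{\perp} = 0$. Therefore any minimizer of \eqref{eq:nonlin_kernel_interpolation} must satisfy $y_{\perp} = 0$, i.e., lie in $\mathcal{R}_N(1)$. The only subtle point is the nonlinearity-handling step above; once one realizes that projecting onto $\mathcal{R}_N(1)$ preserves \emph{both} $y(t_n)$ and $\dot{y}(t_n)$, the standard representer-theorem machinery goes through unchanged.
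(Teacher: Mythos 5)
Your proposal is correct and follows essentially the same argument as the paper: orthogonally decompose $y = y_\parallel + y_\perp$ against $\mathcal{R}_N(1)$, observe that all constraints (initial values and the nonlinear collocation conditions) depend on $y$ only through inner products with representers lying in $\mathcal{R}_N(1)$ and hence only on $y_\parallel$, and conclude by Pythagoras that the minimizer has $y_\perp = 0$. Your write-up merely spells out the feasibility of $y_\parallel$ a bit more explicitly than the paper does.
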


\begin{proof}
Any $y \in \mathbb{Y}$ has the orthogonal decomposition $y = y_\parallel + y_\perp$, where $y_\parallel \in \mathcal{R}_N(1)$ and $y_\perp \in \mathcal{R}_N^\perp(1)$. However, it must be the case that $\norm{y_\perp}_\mathbb{Y} = 0$, since
\begin{equation*}
\frac{1}{2}\norm{y}_\mathbb{Y}^2 = \frac{1}{2}\norm[0]{y_\parallel}_\mathbb{Y}^2 + \frac{1}{2}\norm[0]{y_\perp}_\mathbb{Y}^2 \geq \frac{1}{2}\norm[0]{y_\parallel}_\mathbb{Y}^2
\end{equation*}
and
\begin{subequations}
\begin{align*}
D^m y(0) &= \langle \eta_0^m, y_\parallel \rangle_\mathbb{Y}, \quad m = 0, \ldots, \nu + 1,\\
\mathcal{Z}[y](t) &= \langle \eta_t^1, y_\parallel \rangle_\mathbb{Y} - f\big((t, \langle \eta_t^0, y_\parallel \rangle_\mathbb{Y} \big), \\
\end{align*}
\end{subequations}
for all $t \in \mathbb{T}_N$. \qed
\end{proof}

By Proposition \ref{prop:representer} the optimal point of  \eqref{eq:nonlin_kernel_interpolation} can be written as
\begin{equation*}
y = \sum_{n=0}^N \begin{pmatrix} \eta_{t_n}^0 & \eta_{t_n}^1 \end{pmatrix} \begin{pmatrix} b_0(t_n) \\ b_1(t_n) \end{pmatrix}.
\end{equation*}
However, it is more convenient to expand the optimal point in the larger subspace, $\mathcal{R}_N(\nu) \supset \mathcal{R}_N(1)$
\begin{subequations}
\begin{align}
b(t_n) &= \begin{pmatrix} b_0^\T(t_n) & \ldots &  b_\nu^\T(t_n) \end{pmatrix}^\T, \\
y &= \sum_{n=0}^N \begin{pmatrix} \eta_{t_n}^0 & \ldots & \eta_{t_n}^\nu \end{pmatrix} b(t_n),\\
x &= \sum_{n=0}^N \psi_{t_n} b(t_n),
\end{align}
\end{subequations}
where $x$ is the equivalent element in $\mathbb{X}$ and
\begin{equation*}
\norm{y}_\mathbb{Y}^2 = \norm{x}_\mathbb{X}^2 = \sum_{n,m=0}^N b^\T(t_n) P(t_n,t_m) b(t_m),
\end{equation*}
or more compactly
\begin{equation}\label{eq:rkhs_norm_equals_negloglike}
\norm[0]{x}_\mathbb{X}^2 = \boldsymbol{x}^\T \boldsymbol{P}^{-1} \boldsymbol{x},
\end{equation}
where
\begin{equation*}
\boldsymbol{x} = \begin{pmatrix} x^\T(t_0) & \ldots & x^\T(t_N) \end{pmatrix}^\T, \quad \boldsymbol{P}_{n,m} = P(t_n,t_m).
\end{equation*}
Here $\boldsymbol{P}$ is the kernel matrix associated with function value observations of $X$ at $\mathbb{T}_N$. That is, \eqref{eq:rkhs_norm_equals_negloglike} is up to a constant equal to the negative log-density of $X$ restricted to $\mathbb{T}_N$. Proposition \ref{prop:ssm_map} immediately follows.
\begin{proposition}\label{prop:ssm_map}
The optimisation problem \eqref{eq:nonlin_kernel_interpolation} is equivalent to the MAP problem \eqref{eq:map_estimate_x}.
\end{proposition}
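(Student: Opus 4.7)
The plan is to package together three pieces that are already essentially set up in the preceding pages. First, by Proposition \ref{prop:representer} the optimiser of \eqref{eq:nonlin_kernel_interpolation} lies in the finite dimensional subspace $\mathcal{R}_N(1)$, which is contained in $\mathcal{R}_N(\nu)$, and the associated element $x \in \mathbb{X}$ is a linear combination of the representers $\psi_{t_n}$. Hence optimising over $\mathbb{Y}$ (or equivalently $\mathbb{X}$) collapses to optimising over the vector $\boldsymbol{x} = (x^\T(t_0),\ldots,x^\T(t_N))^\T$, and the objective becomes the quadratic form in \eqref{eq:rkhs_norm_equals_negloglike}.

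Second, I would identify $\boldsymbol{x}^\T \boldsymbol{P}^{-1}\boldsymbol{x}$ with $2\mathcal{V}(x(t_{0:N}))$ up to an additive constant. The key observation is that $\boldsymbol{P}$ is precisely the joint covariance of $(X(t_0),\ldots,X(t_N))$ under the prior \eqref{eq:prior_measure_x}, with $X(t_0) \sim \mathcal{N}(0,\Sigma(t_0^-))$. Since $X$ is a Gauss--Markov process with transition densities \eqref{eq:discrete_time_model}, the joint density factors as a product of the marginal at $t_0$ and the Markov transitions on the mesh, and taking $-2\log$ of this product yields exactly the telescoping sum
\begin{equation*}
\boldsymbol{x}^\T \boldsymbol{P}^{-1}\boldsymbol{x} = \norm{x(t_0)}_{\Sigma(t_0^-)}^2 + \sum_{n=1}^N \norm{x(t_n) - A(h_n)x(t_{n-1})}_{Q(h_n)}^2,
\end{equation*}
which is $2\mathcal{V}(x(t_{0:N}))$. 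Equivalently, one may argue via a block Cholesky / LDL factorisation of $\boldsymbol{P}^{-1}$ induced by the Markov structure; the two routes give the same identity.

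Third, I would check that the two constraint sets agree. The initial value and initial derivative constraints in the definition of $\mathcal{I}_N$ are verbatim \eqref{eq:initial_value_constraint} and \eqref{eq:initial_derivative_constraint}, via the identifications $x^0(t_0) = y(t_0)$ and $x^1(t_0) = \dot y(t_0) = \mathrm{E}_1^\T x(t_0)$. For the information constraint, recall from Section \ref{subsec:data_model} that $\mathcal{Z}[y](t) = z(t,X(t)) = x^1(t) - f(t,x^0(t))$, so $\mathcal{Z}[y](t_n)=0$ on $\mathbb{T}_N$ coincides with \eqref{eq:information_constraint}. Combining the three pieces, the two optimisation problems have identical feasible sets and identical objectives up to an irrelevant constant, hence share the same argmin.

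The main obstacle is the Markov factorisation identity in the second step: one has to be careful that the representer expansion is performed in $\mathcal{R}_N(\nu)$ (so that full vectors $X(t_n) \in \mathbb{R}^{d(\nu+1)}$, and not merely the function values, are available for the quadratic form) and that $\boldsymbol{P}$ is block invertible. The former is why the text enlarges from $\mathcal{R}_N(1)$ to $\mathcal{R}_N(\nu)$, and the latter follows from positive definiteness of $\Sigma(t_0^-)$ together with $Q(h_n) \succ 0$ for $h_n > 0$, which also guarantees that the Markov factorisation of the joint density is well defined.
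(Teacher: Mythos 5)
Your proposal is correct and follows essentially the same route as the paper: the paper also expands the optimum in $\mathcal{R}_N(\nu)$ via Proposition \ref{prop:representer}, writes the squared RKHS norm as $\boldsymbol{x}^\T\boldsymbol{P}^{-1}\boldsymbol{x}$, and observes that this is, up to a constant, the negative log-density of $X$ restricted to $\mathbb{T}_N$, from which the proposition ``immediately follows.'' You merely make explicit the Markov factorisation of $\boldsymbol{P}^{-1}$ and the matching of the constraint sets, both of which the paper leaves implicit.
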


\section{Convergence Analysis}\label{sec:convergence}
In this section, convergence rates of the kernel interpolant $\hat{y}$ as defined by \eqref{eq:nonlin_kernel_interpolation}, and by Proposition \ref{prop:ssm_map} the MAP estimate are obtained. These rates will be in terms of the fill-distance of the mesh $\mathbb{T}_N$, which is\footnote{Classically the error of a numerical integrator is assessed in terms of the maximum step size which is twice the fill-distance.}
\begin{equation}\label{eq:fill_distance}
\delta = \sup_{t \in \mathbb{T}} \min_{n = 0, \ldots,N} \abs{t-t_n}.
\end{equation}
In the following results from the scattered data approximation literature \citep{Arcangeli2007} are employed. More specifically, for any $y \in \mathbb{Y}$, which satisfies the initial condition $y(0) = y_0$, formally has the following representation
\begin{equation*}
y(t) = y_0 + \int_0^t f(\tau,y(\tau)) \dif \tau + \mathcal{E}[y](t),
\end{equation*}
where the error operator $\mathcal{E}$ is defined as
\begin{equation*}
\mathcal{E}[y](t) = \int_0^t \mathcal{Z}[y](\tau) \dif \tau.
\end{equation*}
Of course any reasonable estimator $\hat{y}'$ ought to have the property that $\mathcal{Z}[\hat{y}'](t) \approx 0$ for $t \in \mathbb{T}_N$. The approach is thus to bound $\mathcal{Z}[\hat{y}'](t)$ in some suitable norm, which in turn gives a bound on $\mathcal{E}[\hat{y}'](t)$.

Throughout the discussion $\nu \geq 1$ is some fixed integer, which corresponds to the differentiability of the prior, that is, the kernel interpolant is in $H_2^{\nu+1}(\mathbb{T},\mathbb{R}^d)$. Furthermore, some regularity of the vector field will be required, namely Assumption \ref{ass:smooth}, given below.
\begin{assumption}\label{ass:smooth}
Vector field $f \in C^{\alpha+1}(\tilde{\mathbb{T}}\times \mathbb{R}^d,\mathbb{R}^d)$ with $\alpha \geq \nu$ and some set $\tilde{\mathbb{T}}$ with  $\mathbb{T} \subset \tilde{\mathbb{T}} \subset \mathbb{R}$.
\end{assumption}
Assumption \ref{ass:smooth} will, without explicit mention, be in force throughout the discussion of this section. It implies that (i) the model is well specified for sufficiently small $T$ and (ii) the information operator is well behaved. This shall be made precise in the following.

\subsection{Model Correctness and Regularity of the Solution}
Since $\nu \geq 1$, Assumption \ref{ass:smooth} implies $f$ is locally Lipschitz, and the classical existence and uniqueness results for the solution of Equation \eqref{eq:ode} apply. The extra smoothness on $f$ ensures the solution itself is sufficiently smooth for present purposes. These facts are summarised in Theorem \ref{thm:nice_solution}. For proof(s) refer to \cite[chapter 4, paragraph 32]{Arnold1992}.

\begin{theorem}\label{thm:nice_solution}
There exists $T^* > 0$ such that Equation \eqref{eq:ode} admits a unique solution $y^* \in  C^{\alpha+1}([0,T^*),\mathbb{R}^d)$.
\end{theorem}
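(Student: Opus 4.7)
\medskip
\noindent\textbf{Proof proposal.} The plan is to combine the classical Picard--Lindelöf existence and uniqueness theorem with a standard bootstrap regularity argument. Existence and uniqueness give a $C^1$ solution on a small interval, and the extra smoothness of $f$ is then traded iteratively for higher smoothness of $y^*$ via the identity $Dy^* = f(\cdot,y^*(\cdot))$.

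First, existence and uniqueness: since $\alpha \geq \nu \geq 1$, Assumption \ref{ass:smooth} gives in particular $f \in C^2(\tilde{\mathbb{T}} \times \mathbb{R}^d,\mathbb{R}^d)$, so $f$ is locally Lipschitz in $y$ uniformly on compact subsets of its domain. The Picard--Lindelöf theorem (as in \cite{Arnold1992}) then yields a $T^* > 0$ and a unique solution $y^* \in C^1([0,T^*),\mathbb{R}^d)$ of \eqref{eq:ode} satisfying $Dy^*(t) = f(t,y^*(t))$ for all $t \in [0,T^*)$.

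Next, bootstrap regularity by finite induction on $k$. The induction hypothesis is that $y^* \in C^k([0,T^*),\mathbb{R}^d)$ for some $1 \leq k \leq \alpha$; the base case $k=1$ is the previous paragraph. Under this hypothesis, the composition $t \mapsto f(t, y^*(t))$ lies in $C^k$: by Faà di Bruno's formula its $k$-th derivative is a polynomial expression in partial derivatives of $f$ of order up to $k \leq \alpha+1$ and derivatives of $y^*$ of order up to $k$, each of which is continuous by Assumption \ref{ass:smooth} and the induction hypothesis. Using $Dy^* = f(\cdot, y^*(\cdot))$ then upgrades $Dy^* \in C^k$, so $y^* \in C^{k+1}$. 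Iterating from $k=1$ up to $k=\alpha$ terminates at $y^* \in C^{\alpha+1}([0,T^*),\mathbb{R}^d)$, which is exactly the claim.

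The argument is essentially standard and contains no real obstacle; the only mild bookkeeping is to maintain $k \leq \alpha+1$ throughout the bootstrap, so that the chain rule never requires derivatives of $f$ beyond those guaranteed by Assumption \ref{ass:smooth}. The condition $\alpha \geq \nu \geq 1$ is precisely what makes both the initial Picard--Lindelöf step and the bootstrap go through.
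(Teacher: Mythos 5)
Your proposal is correct and follows essentially the same route as the paper, which simply delegates the entire argument to the classical existence, uniqueness, and differentiability results in \cite[chapter 4, paragraph 32]{Arnold1992}; your Picard--Lindel\"of step plus the bootstrap via $Dy^* = f(\cdot,y^*(\cdot))$ is exactly the standard argument underlying that citation.
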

Theorem \ref{thm:nice_solution} makes apparent the necessity of the next standing assumption.
\begin{assumption}\label{ass:durable_solution}
$T < T^*$. That is,  $\mathbb{T} \subset [0,T^*)$.
\end{assumption}
The model is thus correctly specified in the following sense.
\begin{corollary}[Correct model]\label{cor:correct_model}
The solution $y^*$ of Equation \eqref{eq:ode} on $\mathbb{T}$ is in $\mathbb{Y}$.
\end{corollary}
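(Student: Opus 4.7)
The plan is to chain together the three statements immediately preceding the corollary. By Theorem \ref{thm:nice_solution}, the solution $y^{*}$ exists on $[0,T^{*})$ and lies in $C^{\alpha+1}([0,T^{*}),\mathbb{R}^{d})$ with $\alpha \geq \nu$, hence in particular $y^{*} \in C^{\nu+1}([0,T^{*}),\mathbb{R}^{d})$. By Assumption \ref{ass:durable_solution}, $\mathbb{T} = [0,T] \subset [0,T^{*})$ is compact, so the restriction $y^{*}|_{\mathbb{T}}$ together with all derivatives up to order $\nu+1$ is continuous on a compact set and therefore bounded.

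Next, I would invoke the elementary inclusion $C^{\nu+1}(\mathbb{T},\mathbb{R}^{d}) \hookrightarrow H_{2}^{\nu+1}(\mathbb{T},\mathbb{R}^{d})$: for a compact interval, boundedness of $D^{m} y^{*}$ for $m = 0, \ldots, \nu+1$ immediately gives $D^{m} y^{*} \in \mathcal{L}_{2}(\mathbb{T},\mathbb{R}^{d})$, so $y^{*} \in H_{2}^{\nu+1}(\mathbb{T},\mathbb{R}^{d})$. Finally, Proposition \ref{prop:rkhs_equals_sobolev} identifies $\mathbb{Y} = \mathbb{Y}^{\nu+1}$ with $H_{2}^{\nu+1}(\mathbb{T},\mathbb{R}^{d})$ as sets, so $y^{*} \in \mathbb{Y}$.

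There is essentially no obstacle here: the corollary is a one-line consequence of assembling Theorem \ref{thm:nice_solution}, Assumption \ref{ass:durable_solution}, and Proposition \ref{prop:rkhs_equals_sobolev}. The only thing to be mildly careful about is that the characterisation \eqref{eq:rkhs_def} of $\mathbb{Y}^{\nu+1}$ requires $D^{m} y$ to be absolutely continuous for $m \leq \nu$ and $D^{\nu+1} y \in \mathcal{L}_{2}$; both follow from $y^{*} \in C^{\nu+1}(\mathbb{T},\mathbb{R}^{d})$ on a compact interval, since continuous differentiability implies absolute continuity, and a continuous function on a compact set is square-integrable.
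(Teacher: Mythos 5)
Your proposal is correct and follows essentially the same route as the paper: both use Theorem \ref{thm:nice_solution} together with Assumptions \ref{ass:smooth} and \ref{ass:durable_solution} to get $y^* \in C^{\nu+1}(\mathbb{T},\mathbb{R}^d)$, then use compactness of $\mathbb{T}$ to conclude boundedness (hence $\mathcal{L}_2$-integrability) of $D^{\nu+1}y^*$ and absolute continuity of the lower-order derivatives, which is exactly the characterisation \eqref{eq:rkhs_def} of $\mathbb{Y}$. Whether one phrases the final step via Proposition \ref{prop:rkhs_equals_sobolev} or verifies \eqref{eq:rkhs_def} directly, as the paper does, is an immaterial difference.
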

\begin{proof}
Firstly, $y^* \in  C^{\nu+1}(\mathbb{T},\mathbb{R}^d)$ due to Assumption \ref{ass:smooth}, Theorem \ref{thm:nice_solution}, and Assumption \ref{ass:durable_solution}. Since $D^{\nu+1}y^*$ is continuous and $\mathbb{T}$ is compact, it follows that $D^{\nu+1}y^*$ is bounded and $D^{\nu+1}y^* \in \mathcal{L}_p(\mathbb{T},\mathbb{R}^d)$ for any $p \in [1,\infty]$. Therefore (see e.g., \citealt[Theorem 20.8]{Nielson1997}) $D^m y^* \in \mathrm{AC}(\mathbb{T},\mathbb{R}^d), \ m = 0,\ldots,\nu$. \qed
\end{proof}
Corollary \ref{cor:correct_model} essentially ensures that there is an \emph{a priori} bound on the norm of the MAP estimate, that is $\norm[0]{\hat{y}}_\mathbb{Y} \leq \norm[0]{y^*}_\mathbb{Y}$.

\begin{remark}
It is in general difficult to determine $T^*$ for a given vector field $f$ and initial condition $y_0$, which makes Assumption \ref{ass:durable_solution} hard to verify in general. However, additional conditions can be imposed which assures $T^* = \infty$. An example of such a condition is that the vector field is \emph{uniformly Lipschitz} as mapping of $\mathbb{R}_+ \times \mathbb{R}^d \to \mathbb{R}^d$ \citep[Theorem 8.13]{Kelley2010}. That is, for any $y,y' \in \mathbb{R}^d$ it holds that
\begin{equation*}
\sup_{t \in \mathbb{R}_+} \norm{f(t,y) - f(t,y')} \leq \operatorname{Lip}(f) \norm{y - y'},
\end{equation*}
where $\operatorname{Lip}(f) < \infty$ is a positive constant.
\end{remark}

\subsection{Properties of the Information Operator}
By Proposition \ref{prop:rkhs_equals_sobolev}, $\mathbb{Y}$ correspond to the Sobolev space $H_2^{\nu+1}(\mathbb{T},\mathbb{R}^d)$, hence it is crucial to understand how the Nemytsky operator $\mathcal{S}_f$, and consequently $\mathcal{Z}$, act on Sobolev spaces. For the Nemytsky operator, the work has already been done \citep{Valent2013,Valent1985}, and Theorem \ref{thm:nemytsky_properties} is immediate.

\begin{theorem}\label{thm:nemytsky_properties}
Let $\mathscr{U}$ be an open subset of $H_2^{\nu+1}(\mathbb{T},\mathbb{R}^d)$ such that $y(\mathbb{T}) \subset U$ for any $y \in \mathscr{U}$, where $U$ some open subset of $\mathbb{R}^d$. The Nemytsky operator $\mathcal{S}_{f_i}$, associated with the $i$th coordinate of $f$ is then $C^1$ mapping from $\mathscr{U}$ onto $H_2^\nu(\mathbb{T},\mathbb{R})$ for $i=1,\ldots,d$. If in addition, $U$ is convex and bounded, then for any $y' \in \mathscr{U}$ there is number $c_0(y') > 0$ such that
\begin{equation*}
\norm[0]{\mathcal{S}_{f_i}[y] - \mathcal{S}_{f_i}[y']}_{H_2^\nu} \leq c_0(y') \abs{f_i}_{\nu+1,U} \norm[0]{y-y'}_{H_2^{\nu+1}},
\end{equation*}
for all $y \in \mathscr{U}$, where
\begin{equation*}
\abs{f_i}_{\nu+1,U} \coloneqq \sum_{m =0}^{\nu+1} \sup_{(t,a) \in \mathbb{T}\times U} \abs{D^m f_i(t,a)}.
\end{equation*}
\end{theorem}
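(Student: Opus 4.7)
The plan is to reduce the statement to a direct application of Valent's classical results on Nemytsky operators between Sobolev spaces (\citealt{Valent1985,Valent2013}), relying on the one-dimensional nature of $\mathbb{T}$ to obtain a continuous embedding into $C^\nu$. I would first verify the two structural prerequisites: (i) since $\dim \mathbb{T} = 1$ and $\nu + 1 \geq 2 > 1/2$, the Sobolev embedding theorem gives $H_2^{\nu+1}(\mathbb{T},\mathbb{R}^d) \hookrightarrow C^\nu(\mathbb{T},\mathbb{R}^d)$, so pointwise evaluations $y(t), Dy(t), \ldots, D^\nu y(t)$ are well-defined and continuous in $t$; and (ii) Assumption \ref{ass:smooth} yields $f_i \in C^{\alpha+1} \subseteq C^{\nu+1}$, which is exactly the regularity required by Valent's composition theorem to ensure $\mathcal{S}_{f_i}[y] \in H_2^\nu(\mathbb{T},\mathbb{R})$ whenever $y \in \mathscr{U}$.

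Next I would establish the $C^1$ property by identifying the candidate Fréchet derivative as the linear map
\begin{equation*}
h \mapsto \nabla_y f_i(\cdot, y'(\cdot))\, h + D_t f_i(\cdot, y'(\cdot))\cdot 0,
\end{equation*}
i.e. multiplication by the Jacobian along $y'$. To show that this is a bounded operator from $H_2^{\nu+1}$ to $H_2^\nu$ and depends continuously on $y' \in \mathscr{U}$, I would expand $D^m[\nabla_y f_i(t,y'(t)) h(t)]$ via the Faà di Bruno / Leibniz formulae. Each resulting term is a product of mixed partials of $f_i$ of order at most $\nu+1$ evaluated at $(t,y'(t))$, products of derivatives $D^{k} y'$ with $k \leq \nu$, and a derivative $D^j h$ with $j \leq \nu$. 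Using $H_2^{\nu+1} \hookrightarrow C^\nu$, the $y'$-factors are bounded in $C(\mathbb{T})$ by a polynomial in $\norm{y'}_{H_2^{\nu+1}}$, while the $f_i$-factors are uniformly bounded by $\abs{f_i}_{\nu+1,U}$ once $y(\mathbb{T}) \subset U$. Continuity of $y' \mapsto D\mathcal{S}_{f_i}[y']$ then follows from the continuity of pointwise composition with the continuous map $\nabla_y f_i$ and from $C^\nu$-continuity of Sobolev elements.

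For the Lipschitz bound, I would use convexity of $U$ to ensure that the segment $y'(\cdot) + \theta(y(\cdot)-y'(\cdot))$ takes values in $U$ for every $\theta \in [0,1]$, so that (shrinking $\mathscr{U}$ if necessary, exploiting its openness) the whole segment lies in $\mathscr{U}$. Then the mean value inequality in Banach spaces gives
\begin{equation*}
\norm[0]{\mathcal{S}_{f_i}[y] - \mathcal{S}_{f_i}[y']}_{H_2^\nu} \leq \sup_{\theta \in [0,1]} \norm[0]{D\mathcal{S}_{f_i}[y' + \theta(y-y')]}_{\mathrm{op}}\, \norm[0]{y-y'}_{H_2^{\nu+1}},
\end{equation*}
and the operator-norm estimate from the previous paragraph yields the bound with $c_0(y')$ absorbing the polynomial dependence on $\norm{y'}_{H_2^{\nu+1}}$ (finite by openness of $\mathscr{U}$ and boundedness of $U$) and the combinatorial constants from the Leibniz expansion.

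The main obstacle is the bookkeeping in the Leibniz expansion: extracting a single multiplicative factor $\abs{f_i}_{\nu+1,U}$ while collecting the remaining $y'$-dependent quantities into one finite constant $c_0(y')$ requires care, because crude estimates mix derivatives of $f_i$ of different orders with derivatives of $y'$. This is exactly the bookkeeping carried out in Valent's algebraic framework for superposition operators on Sobolev spaces, so rather than redoing it I would simply invoke \cite[Thm.~I]{Valent1985} (or its textbook form in \citealt{Valent2013}) after verifying that our hypotheses match.
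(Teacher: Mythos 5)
Your proposal is correct and ultimately takes the same route as the paper, which simply cites Theorem 4.1 and step (ii) of the proof of Theorem 4.5 in \cite{Valent2013} for the two claims respectively; your additional sketch of the Sobolev embedding, the Fr\'echet derivative, and the mean-value argument is a faithful outline of what Valent's proofs actually do. The only cosmetic quibble is that the mean value inequality needs the segment $y'+\theta(y-y')$ to have range in $U$ (guaranteed by convexity of $U$) rather than to lie in $\mathscr{U}$ itself, so the ``shrinking $\mathscr{U}$'' hedge is unnecessary.
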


\begin{proof}
The first claim is just an application of Theorem 4.1 of \cite[page 32]{Valent2013} and the second claim follows from (ii) in the proof of Theorem 4.5 in \cite[page 37]{Valent2013}. \qed
\end{proof}

Theorem \ref{thm:nemytsky_properties} establishes that $\mathcal{S}_{f_i}$ as a mapping of $\mathscr{U}$ onto $H_2^\nu(\mathbb{T},\mathbb{R})$ is locally Lipschitz. This property is inherited by the information operator.

\begin{proposition}\label{prop:information_properties}
In the same setting as Theorem \ref{thm:nemytsky_properties}. The $i$th coordinate of the information operator, $\mathcal{Z}_i$, is a $C^1$ mapping from $\mathscr{U}$ onto $H_2^\nu(\mathbb{T},\mathbb{R})$, for $i=1,\ldots,d$.  If in addition, $U$ is convex and bounded, then for any $y' \in \mathscr{U}$ there is number $c_1(y',\nu,f_i,U) > 0$ such that
\begin{equation*}
\norm[0]{\mathcal{Z}_i[y] - \mathcal{Z}_i[y']}_{H_2^\nu} \leq c_1(y',\nu,f_i,U) \norm[0]{y-y'}_{H_2^{\nu+1}},
\end{equation*}
for all $y \in \mathscr{U}$.
\end{proposition}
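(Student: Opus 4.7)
The plan is to decompose $\mathcal{Z}_i = D_i - \mathcal{S}_{f_i}$, where $D_i \colon H_2^{\nu+1}(\mathbb{T},\mathbb{R}^d) \to H_2^\nu(\mathbb{T},\mathbb{R})$ denotes the linear map $y \mapsto D y_i$ (extraction of the $i$th coordinate followed by differentiation), and to treat each term separately. The whole proposition should follow almost directly from Theorem \ref{thm:nemytsky_properties} together with bounded linearity of $D_i$.

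First I would establish that $D_i$ is a bounded linear operator between the claimed Sobolev spaces. Up to the equivalence of norms exhibited in the ``Notation'' paragraph, one has $\norm{Dy_i}_{H_2^\nu(\mathbb{T},\mathbb{R})} \leq C_D \norm{y}_{H_2^{\nu+1}(\mathbb{T},\mathbb{R}^d)}$ for some constant $C_D$ depending only on $\nu$ and $d$, since the seminorms $|Dy_i|_{H_2^m} = \norm{D^{m+1} y_i}_{\mathcal{L}_2}$ for $m=1,\ldots,\nu$ are all controlled by the Sobolev norm of order $\nu+1$ of $y$. Since $D_i$ is bounded linear it is $C^1$ (its Fr\'echet derivative at every point is $D_i$ itself). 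By Theorem \ref{thm:nemytsky_properties}, $\mathcal{S}_{f_i}$ is a $C^1$ map from $\mathscr{U}$ into $H_2^\nu(\mathbb{T},\mathbb{R})$. The difference of two $C^1$ maps into the same Banach space is $C^1$, which yields the first claim.

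For the Lipschitz estimate under the additional assumption that $U$ is convex and bounded, I would apply the triangle inequality
\begin{equation*}
\norm{\mathcal{Z}_i[y] - \mathcal{Z}_i[y']}_{H_2^\nu} \leq \norm{D_i(y-y')}_{H_2^\nu} + \norm{\mathcal{S}_{f_i}[y] - \mathcal{S}_{f_i}[y']}_{H_2^\nu}.
\end{equation*}
The first term is bounded by $C_D \norm{y-y'}_{H_2^{\nu+1}}$ by the preceding observation, and the second is bounded by $c_0(y') \abs{f_i}_{\nu+1,U} \norm{y-y'}_{H_2^{\nu+1}}$ by the local-Lipschitz part of Theorem \ref{thm:nemytsky_properties}. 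Setting
\begin{equation*}
c_1(y',\nu,f_i,U) \coloneqq C_D + c_0(y')\abs{f_i}_{\nu+1,U}
\end{equation*}
delivers the asserted bound.

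I do not anticipate any serious obstacle. The only bookkeeping subtlety is ensuring that the (semi)norm conventions introduced in the ``Notation'' paragraph, where $\norm{y}_{H_p^\alpha}$ is defined by summing seminorms from $m=1$ to $\alpha$, behave as expected under differentiation; since all the norms in play are equivalent to the usual full-sum Sobolev norm, the constant $C_D$ is finite and depends only on $\nu,d$, and the equivalence constants. Everything else is a direct transcription of Theorem \ref{thm:nemytsky_properties}.
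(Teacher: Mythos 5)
Your proposal is correct and follows essentially the same route as the paper: decompose $\mathcal{Z}_i$ into the linear derivative-extraction map and the Nemytsky operator, get the $C^1$ claim from boundedness of the former plus Theorem \ref{thm:nemytsky_properties}, and get the Lipschitz bound by the triangle inequality with $c_1 = C_D + c_0(y')\abs{f_i}_{\nu+1,U}$. The only cosmetic difference is that the paper takes $C_D = 1$, which is immediate from the (semi)norm conventions in the Notation section.
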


\begin{proof}
The differential operator $D \mathrm{e}_i^\T$ is a $C^1$ mapping of $\mathscr{U}$ onto $H_2^\nu(\mathbb{T},\mathbb{R})$. Consequently, by Theorem \ref{thm:nemytsky_properties} the same holds for the operator $D \mathrm{e}_i^\T - \mathcal{S}_{f_i} = \mathcal{Z}_i$. For the second part, the triangle inequality gives
\begin{equation*}
\begin{split}
\norm[0]{\mathcal{Z}_i[y] - \mathcal{Z}_i[y']}_{H_2^\nu} &\leq \norm[0]{D y_i - D y'_i}_{H_2^\nu} \\
&\quad+ \norm[0]{\mathcal{S}_{f_i}[y] - \mathcal{S}_{f_i}[y']}_{H_2^\nu},
\end{split}
\end{equation*}
and clearly
\begin{equation*}
\norm[0]{D y_i - D y'_i}_{H_2^\nu} \leq \norm[0]{y - y'}_{H_2^{\nu+1}}.
\end{equation*}
Consequently, by Theorem \ref{thm:nemytsky_properties} the statement holds by selecting
\begin{equation*}
 c_1(y',\nu,f_i,U) = 1 +  c_0(y') \abs{f_i}_{\nu+1,U}.
\end{equation*}
\qed
\end{proof}

\subsection{Convergence of the MAP Estimate}\label{subsec:map_convergence}
Proceeding with the convergence analysis of the MAP estimate can finally be done in view of the regularity properties of the solution $y^*$ and the information operator $\mathcal{Z}$ established by Corollary \ref{cor:correct_model} and Proposition \ref{prop:information_properties}. Combining these results with Theorem 4.1 of \cite{Arcangeli2007} leads to Lemma \ref{lem:information_bound}.

\begin{lemma}\label{lem:information_bound}
Let $\rho \in \mathbb{Y}$ with $\norm{\rho}_\mathbb{Y} > \norm[0]{y^*}_\mathbb{Y}$ and $q \in [1,\infty]$. Then there are positive constants $c_2$, $\delta_{0,\nu}$, $r$ (depending on $\rho$), and $ c_3(y^*,\nu,f_i,r) $ such that for any $y \in B(0,\norm{\rho}_\mathbb{Y})$ the following estimate holds for all $\delta < \delta_{0,\nu}$ and $m=0,\ldots,\nu-1$
\begin{equation*}
\begin{split}
\abs{\mathcal{Z}_i[y]}_{H_q^m} &\leq c_2 \delta^{\nu-m-(1/2-1/q)_+}c_3(y^*,\nu,f_i,r)  \norm[0]{y - y^*}_{H_2^{\nu+1}} \\
&\quad+ c_2\delta^{-m} \norm{\mathcal{Z}_i[y] \mid \mathbb{T}_N}_\infty  ,
\end{split}
\end{equation*}
where
\begin{equation*}
\norm{\mathcal{Z}_i[y] \mid \mathbb{T}_N}_\infty \coloneqq \max_{t \in \mathbb{T}_N} \abs{\mathcal{Z}_i[y](t)}.
\end{equation*}
\end{lemma}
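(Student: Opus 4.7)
The plan is to deduce the estimate by combining a sampling inequality from scattered data approximation (Theorem 4.1 of \cite{Arcangeli2007}) applied to the function $\mathcal{Z}_i[y]$ with the local Lipschitz property of $\mathcal{Z}_i$ established in Proposition \ref{prop:information_properties}, using crucially that $y^*$ satisfies $\mathcal{Z}_i[y^*]\equiv 0$ on $\mathbb{T}$.

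First I would set up the ambient sets needed to invoke Proposition \ref{prop:information_properties}. Proposition \ref{prop:rkhs_equals_sobolev} identifies $\mathbb{Y}$ with $H_2^{\nu+1}(\mathbb{T},\mathbb{R}^d)$ with equivalent norms; since $\nu\geq 1$, the standard Sobolev embedding $H_2^{\nu+1}(\mathbb{T},\mathbb{R}^d)\hookrightarrow C(\mathbb{T},\mathbb{R}^d)$ yields a constant $C_e$ with $\|y\|_\infty\leq C_e\|y\|_\mathbb{Y}$. Hence for every $y\in B(0,\|\rho\|_\mathbb{Y})$ one has $y(\mathbb{T})\subset U$, where $U\subset\mathbb{R}^d$ is any open convex bounded set of radius $r := 2C_e\|\rho\|_\mathbb{Y}$ (so $y^*(\mathbb{T})\subset U$ as well by the hypothesis $\|\rho\|_\mathbb{Y}>\|y^*\|_\mathbb{Y}$). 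Choosing an open neighbourhood $\mathscr{U}\supset B(0,\|\rho\|_\mathbb{Y})$ of this ball inside $H_2^{\nu+1}$, Proposition \ref{prop:information_properties} then provides a constant $c_1(y^*,\nu,f_i,U)$ such that, for every $y$ in the ball,
\begin{equation*}
\|\mathcal{Z}_i[y]-\mathcal{Z}_i[y^*]\|_{H_2^{\nu}}\leq c_1(y^*,\nu,f_i,U)\,\|y-y^*\|_{H_2^{\nu+1}}.
\end{equation*}
Because $\mathcal{Z}[y^*](t)=Dy^*(t)-f(t,y^*(t))=0$ on $\mathbb{T}$ by Theorem \ref{thm:nice_solution}, Corollary \ref{cor:correct_model}, and Assumption \ref{ass:durable_solution}, the left-hand side equals $\|\mathcal{Z}_i[y]\|_{H_2^\nu}$ and in particular dominates $|\mathcal{Z}_i[y]|_{H_2^\nu}$.

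Next I would apply the Arcangéli--López de Silanes--Torrens sampling inequality to the scalar function $u := \mathcal{Z}_i[y]\in H_2^\nu(\mathbb{T},\mathbb{R})$. For a Lipschitz (indeed smooth) one-dimensional domain such as $\mathbb{T}=[0,T]$ and for any $q\in[1,\infty]$, Theorem 4.1 of \cite{Arcangeli2007} provides constants $c_2$ and $\delta_{0,\nu}$ (depending only on $\mathbb{T}$, $\nu$, $q$) such that whenever the fill-distance satisfies $\delta<\delta_{0,\nu}$,
\begin{equation*}
|u|_{H_q^m}\;\leq\;c_2\,\delta^{\nu-m-(1/2-1/q)_+}\,|u|_{H_2^{\nu}}\;+\;c_2\,\delta^{-m}\,\|u\mid\mathbb{T}_N\|_\infty,
\end{equation*}
valid for $m=0,\dots,\nu-1$. (The exponent $(1/2-1/q)_+$ originates from comparing $L_2$ and $L_q$ norms on the small pieces of the partition induced by $\mathbb{T}_N$.) Substituting $u=\mathcal{Z}_i[y]$ and inserting the Lipschitz bound from the previous paragraph with $c_3(y^*,\nu,f_i,r):=c_1(y^*,\nu,f_i,U)$ yields precisely the claimed inequality.

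The principal difficulty, and the step that deserves the most care, is the first paragraph: establishing that the ball $B(0,\|\rho\|_\mathbb{Y})$ can be enclosed in an open set $\mathscr{U}$ on which $\mathcal{Z}_i$ maps into $H_2^\nu$ with a uniform Lipschitz constant, and verifying that $U$ can be chosen convex, bounded, and simultaneously containing $y(\mathbb{T})$ for every $y$ in this ball and for $y^*$. Once this geometric setup is in place, the sampling inequality is essentially a black-box invocation and the remaining algebra is routine. A minor additional check is that Theorem 4.1 of \cite{Arcangeli2007} applies with constants independent of $u$, so that $c_2$ and $\delta_{0,\nu}$ depend only on $(\mathbb{T},\nu,q)$; this is indeed the content of that result.
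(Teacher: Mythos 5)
Your proposal is correct and follows essentially the same route as the paper's proof: a uniform sup-norm bound on the ball $B(0,\norm{\rho}_\mathbb{Y})$ (the paper obtains it via Cauchy--Schwarz with the representer $\eta_t^{0,\mathrm{e}_i}$ rather than quoting the Sobolev embedding, which is equivalent), then Proposition \ref{prop:information_properties} applied on that ball together with $\mathcal{Z}[y^*]=0$, and finally the sampling inequality of Theorem 4.1 of \cite{Arcangeli2007} applied to $\mathcal{Z}_i[y]\in H_2^\nu(\mathbb{T},\mathbb{R})$. The only cosmetic difference is the order in which the Lipschitz bound and the sampling inequality are invoked, which does not affect the argument.
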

\begin{proof}
Firstly, Cauchy--Schwartz inequality yields
\begin{equation*}
\abs{y_i(t)} = \abs[0]{\langle \eta_t^{0,\mathrm{e}_i}, y \rangle_\mathbb{Y}} \leq \sqrt{ R_{ii}(t,t)} \norm{y}_\mathbb{Y},
\end{equation*}
hence there is a positive constant $\tilde{c}$ such that
\begin{equation*}
\norm{y_i}_{\mathcal{L}_\infty} \leq \tilde{c} \norm{y}_\mathbb{Y}.
\end{equation*}
Consequently, there exists a radius $r$ (depending on $\rho$) such that $y(\mathbb{T}) \subset B(0,r)$ whenever $y \in B(0,\norm{\rho}_\mathbb{Y})$. The set $B(0,\norm{\rho}_\mathbb{Y})$ is open in $\mathbb{Y}$ and by Proposition \ref{prop:rkhs_equals_sobolev} it is an open set in $H_2^{\nu+1}(\mathbb{T},\mathbb{R}^d)$. Therefore, all the conditions of Proposition \ref{prop:information_properties} are met for the sets $B(0,\norm{\rho}_\mathbb{Y})$ and $B(0,r)$. In particular, $\mathcal{Z}_i[y] \in H_2^\nu(\mathbb{T})$ for all $y \in B(0,\norm{\rho}_\mathbb{Y})$. Consequently, for appropriate selection of parameters \cite[Theorem 4.1 page 193]{Arcangeli2007} gives
\begin{equation*}
\begin{split}
\abs{\mathcal{Z}_i[y]}_{H_q^m} &\leq c_2  \delta^{\nu-m-(1/2-1/q)_+} \abs{\mathcal{Z}_i[y]}_{H_2^{\nu}} \\
&\quad + c_2\delta^{-m} \norm{\mathcal{Z}_i[y] \mid \mathbb{T}_N}_\infty
\end{split}
\end{equation*}
for all $\delta < \delta_{0,\nu}$ and $m=0,\ldots,\nu-1$. Since $\mathcal{Z}[y^*] = 0$ it follows that
\begin{equation*}
\abs{\mathcal{Z}_i[y]}_{H_2^{\nu}} = \abs[0]{\mathcal{Z}_i[y]-\mathcal{Z}_i[y^*]}_{H_2^{\nu}} \leq \norm[0]{\mathcal{Z}_i[y]-\mathcal{Z}_i[y^*]}_{H_2^{\nu}},
\end{equation*}
and by Proposition \ref{prop:information_properties} the Lemma holds by selecting
\begin{equation*}
c_3(y^*,\nu,f_i,r) =  c_1(y^*,\nu,f_i,B(0,r)),
\end{equation*}
which concludes the proof. \qed
\end{proof}

In view of Lemma \ref{lem:information_bound}, for any estimator $\hat{y}' \in \mathbb{Y}$, its convergence rate can be established provided the following is shown:
\begin{itemize}
\item[(i)] There is $\rho \in \mathbb{Y}$ independent of $\hat{y}'$ such that $y^*,\hat{y}' \in B(0,\norm{\rho}_\mathbb{Y})$
\item[(ii)] A bound proportional to $\delta^\gamma$, $\gamma > 0$, of $\norm{\mathcal{Z}_i[\hat{y}'] \mid \mathbb{T}_N}_\infty$ exists.
\end{itemize}
Neither (i) nor (ii) appear trivial to establish for Gaussian estimators in general (e.g., the methods of \citealt{Schober2019} and \citealt{Tronarp2019c}). However, (i) and (ii) hold for the optimal (MAP) estimate $\hat{y}$, which yields Theorem \ref{thm:convergence}.

\begin{theorem}\label{thm:convergence}
Let $q \in [1,\infty]$, then under the same assumptions as in Lemma \ref{lem:information_bound}, there is a constant $c_4(y^*,\nu,f_i,r)$ such that for $\delta < \delta_{0,\nu}$ the following holds:
\begin{subequations}
\begin{align*}
\abs{\mathcal{E}_i[\hat{y}]}_{H_q^0} &\leq \delta^\nu T^{1/q} c_4(y^*,\nu,f_i,r) \norm{y^*}_\mathbb{Y}, \\
\abs{\mathcal{E}_i[\hat{y}]}_{H_q^m} &\leq   \delta^{\nu+1-m-(1/2-1/q)_+} c_4(y^*,\nu,f_i,r) \norm{y^*}_\mathbb{Y},
\end{align*}
\end{subequations}
where $m = 1,\ldots,\nu$.
\end{theorem}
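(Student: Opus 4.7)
The plan is to verify the two conditions (i) and (ii) highlighted just before the theorem statement, after which the claim reduces to bookkeeping with Lemma \ref{lem:information_bound}. First, for (i), Corollary \ref{cor:correct_model} says the true solution $y^*$ is in $\mathbb{Y}$, and by inspection $y^* \in \mathcal{I}_N$ since $y^*(0) = y_0$, $\dot{y}^*(0) = f(0,y_0)$, and $\mathcal{Z}[y^*] \equiv 0$. Hence $y^*$ is feasible for \eqref{eq:nonlin_kernel_interpolation}, and by optimality of $\hat{y}$ one has $\norm[0]{\hat{y}}_\mathbb{Y} \leq \norm[0]{y^*}_\mathbb{Y}$; thus for any $\rho$ with $\norm{\rho}_\mathbb{Y} > \norm[0]{y^*}_\mathbb{Y}$, both $\hat{y}$ and $y^*$ lie in $B(0,\norm{\rho}_\mathbb{Y})$. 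For (ii), $\hat{y}$ exactly satisfies the information constraints \eqref{eq:information_constraint} by construction, so $\norm{\mathcal{Z}_i[\hat{y}] \mid \mathbb{T}_N}_\infty = 0$, annihilating the second term in Lemma \ref{lem:information_bound}.

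With both conditions in hand, Lemma \ref{lem:information_bound} collapses to
\begin{equation*}
\abs{\mathcal{Z}_i[\hat{y}]}_{H_q^m} \leq c_2\, c_3(y^*,\nu,f_i,r)\, \delta^{\nu-m-(1/2-1/q)_+}\, \norm[0]{\hat{y} - y^*}_{H_2^{\nu+1}}, \quad m=0,\ldots,\nu-1.
\end{equation*}
By Proposition \ref{prop:rkhs_equals_sobolev} the norms $\norm{\cdot}_\mathbb{Y}$ and $\norm{\cdot}_{H_2^{\nu+1}}$ are equivalent, so the triangle inequality together with $\norm[0]{\hat{y}}_\mathbb{Y} \leq \norm[0]{y^*}_\mathbb{Y}$ yields $\norm[0]{\hat{y} - y^*}_{H_2^{\nu+1}} \leq C \norm[0]{y^*}_\mathbb{Y}$ for a constant $C$ depending only on the norm-equivalence constant. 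This constant is absorbed into $c_4(y^*,\nu,f_i,r)$.

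Next I translate these $\mathcal{Z}_i[\hat{y}]$ bounds into bounds on $\mathcal{E}_i[\hat{y}]$. For $m \geq 1$, differentiating under the integral in the definition of $\mathcal{E}$ gives $D^m \mathcal{E}_i[\hat{y}] = D^{m-1} \mathcal{Z}_i[\hat{y}]$, so $\abs{\mathcal{E}_i[\hat{y}]}_{H_q^m} = \abs{\mathcal{Z}_i[\hat{y}]}_{H_q^{m-1}}$; substituting $m-1$ for $m$ in the display above (the admissible range $m-1 \leq \nu-1$ becomes $m \leq \nu$) yields precisely the claimed exponent $\nu+1-m-(1/2-1/q)_+$. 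For $m=0$, I would route through the uniform norm: the pointwise estimate $\abs{\mathcal{E}_i[\hat{y}](t)} \leq \norm[0]{\mathcal{Z}_i[\hat{y}]}_{\mathcal{L}_1(\mathbb{T})}$ together with $\norm{g}_{\mathcal{L}_q(\mathbb{T})} \leq T^{1/q} \norm{g}_{\mathcal{L}_\infty(\mathbb{T})}$ gives $\abs{\mathcal{E}_i[\hat{y}]}_{H_q^0} \leq T^{1/q} \abs{\mathcal{Z}_i[\hat{y}]}_{H_1^0}$, and applying the above estimate at $(q,m) = (1,0)$, where $(1/2 - 1)_+ = 0$, delivers the $\delta^\nu$ rate.

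The substantive work is really the verification of (i)--(ii); everything after is manipulation of Sobolev seminorms. The one mild subtlety worth flagging is that the $\mathcal{L}_1$ route for the $m=0$ case is essential: estimating through $\mathcal{L}_2$ or $\mathcal{L}_\infty$ would introduce a $(1/2-1/q)_+$ loss in the exponent, whereas the $\mathcal{L}_1$ bound preserves the full $\delta^\nu$ global rate at the cost of only a benign $T^{1/q}$ factor.
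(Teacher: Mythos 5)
Your proposal is correct and follows essentially the same route as the paper's own proof: establish $\norm[0]{\hat{y}}_\mathbb{Y} \leq \norm[0]{y^*}_\mathbb{Y}$ via feasibility of $y^*$, note $\norm{\mathcal{Z}_i[\hat{y}] \mid \mathbb{T}_N}_\infty = 0$ to kill the second term in Lemma \ref{lem:information_bound}, use the identity $\abs{\mathcal{E}_i[\hat{y}]}_{H_q^m} = \abs{\mathcal{Z}_i[\hat{y}]}_{H_q^{m-1}}$ together with norm equivalence from Proposition \ref{prop:rkhs_equals_sobolev}, and derive the $m=0$ case from the $(q,m)=(1,1)$ instance via the pointwise $\mathcal{L}_1$ bound and the $T^{1/q}$ factor. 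The subtlety you flag about routing through $\mathcal{L}_1$ is exactly the step the paper takes.
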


\begin{proof}
Firstly, note that $\norm{\hat{y}}_\mathbb{Y} \leq \norm{y^*}_\mathbb{Y}$ and $\abs{\mathcal{E}_i[\hat{y}]}_{H_q^m} = \abs{\mathcal{Z}_i[\hat{y}]}_{H_q^{m-1}}$. By definition
\begin{equation*}
\norm{\mathcal{Z}_i[\hat{y}] \mid \mathbb{T}_N}_\infty = 0,
\end{equation*}
hence $\hat{y} \in B(0,\norm{\rho}_\mathbb{Y})$, and Lemma \ref{lem:information_bound} gives for $m=1,\ldots,\nu$
\begin{equation*}
\begin{split}
\abs{\mathcal{Z}_i[\hat{y}]}_{H_q^{m-1}} &\leq   \delta^{\nu+1-m-(1/2-1/q)_+} c_2c_3(y^*,\nu,f_i,r)\\
&\quad\times \norm[0]{\hat{y} - y^*}_{H_2^{\nu+1}}.
\end{split}
\end{equation*}
By Proposition \ref{prop:rkhs_equals_sobolev}, the fact that $\norm{\hat{y}}_\mathbb{Y} \leq \norm{y^*}_\mathbb{Y}$, and the triangle inequality, there exists a constant $c_B$ (independent of $\hat{y}$ and $y^*$) such that
\begin{equation*}
\norm[0]{\hat{y} - y^*}_{H_2^{\nu+1}} \leq c_B \norm{y^*}_\mathbb{Y}
\end{equation*}
and thus the second bound holds by selecting
\begin{equation*}
c_4(y^*,\nu,f_i,r) = c_2 c_B c_3(y^*,\nu,f_i,r).
\end{equation*}
For the first bound, the triangle inequality for integrals gives
\begin{equation*}
\abs{\mathcal{E}_i[\hat{y}](t)} \leq \abs{\mathcal{Z}_i[\hat{y}]}_{H_1^0},
\end{equation*}
and hence
\begin{equation*}
\abs{\mathcal{E}_i[\hat{y}](t)}_{H_q^0} \leq T^{1/q} \abs{\mathcal{Z}_i[\hat{y}]}_{H_1^0},
\end{equation*}
which combined with the second bound gives the first. \qed
\end{proof}

At first glance, it may appear that there is an appalling absence of dependence on $T$ in the constants of the convergence rates provided by Theorem \ref{thm:convergence}. This is not the case, the $T$ dependence have conveniently been hidden in $\norm[0]{y^*}_\mathbb{Y}$ and possibly $c_4(y^*,\nu,f_i,r)$. Now $c_4(y^*,\nu,f_i,r)$ depends on $c_0(y^*)$ and $\abs{f_i}_{\nu+1,B(0,r)}$ and unfortunately an explicit expression for $c_0(y^*)$ is not provided by \cite{Valent2013}, which makes the effect of $c_4(y^*,\nu,f_i,r)$ difficult to untangle. Nevertheless, the factor $\norm[0]{y^*}_\mathbb{Y}$  does indeed depend on the interval length $T$. For example, let $\lambda,y_0 \in \mathbb{R}$ and consider the following ODE
\begin{equation}\label{eq:test_ode}
\dot{y}(t) = \lambda y(t), \ y(0) = y_0.
\end{equation}
Setting $\Sigma(t_0^-) = \mathrm{I}$ and selecting the prior $\mathrm{IWP}(\mathrm{I},\nu)$ gives the following (in this case $\mathcal{A} = D^{\nu+1}$)
\begin{equation}\label{eq:rkhs_norm_test_iwp}
\norm{y^*}_\mathbb{Y}^2 = y_0^2 \Big( \sum_{m = 0}^\nu \lambda^{2m} + \frac{\lambda^{2\nu+1}}{2}\big( \exp(2\lambda T) - 1 \big)  \Big).
\end{equation}
Consequently, the global error can be quite bad when $\lambda > 0$ and $T$ is large even when $\delta$ is very small, which is the usual situation (cf. Theorem 3.4 of \cite{Hairer87}).

In the present context it is instructive to view the solution of \eqref{eq:ode} as a family of a quadrature problems
\begin{equation}\label{eq:integral_equation}
y(t) = y_0 + \int_0^t f(\tau,y(\tau)) \dif \tau,
\end{equation}
where $\dot{y}(t) = f(t,y(t))$ is modelled by an element of $H_2^\nu(\mathbb{T},\mathbb{R}^d)$. In view of Theorem \ref{thm:convergence}, $D^m \dot{\hat{y}}$ converges uniformly to $D^m \dot{y}^*$ at a rate of $\delta^{\nu -m - 1/2}, \ m = 0,\ldots,\nu-1$, thus for $\dot{\hat{y}}$ the same rate as for standard spline interpolation is obtained \citep{Schultz1970}. Furthermore, the rate obtained for $\hat{y}$ by Theorem \ref{thm:convergence} matches the rate for integral approximations using Sobolev kernels \cite[Proposition 1]{Kanagawa2020a}. That is, although dealing with a nonlinear interpolation/integration problem, Assumption \ref{ass:smooth} ensures the problem is still nice enough for the optimal interpolant to enjoy the classical convergence rates.

\section{Selecting the Hyperparameters}
In order to calibrate the credible intervals, the parameters $\Sigma(t_0^-)$ and $\Gamma$ need to be appropriately scaled to the problem being solved. It is practical to work with the parametrisation
$\Sigma(t_0^-) = \sigma^2 \breve{\Sigma}(t_0^-)$ and $\Gamma = \sigma^2 \breve{\Gamma}$ for fixed $\Sigma(t_0^-)$  and $\breve{\Gamma}$. In this case, the quasi maximum likelihood estimate of $\sigma^2$ can be computed cheaply, see Appendix \ref{app:uncertainty_calibration}.

In principle, the parameters $F_m$ ($0 \leq m \leq \nu$) can be estimated via quasi maximum likelihood as well but this would require iterative optimisation. For a given computational budget this may not be advantageous since the convergence rate obtained in Theorem \ref{thm:convergence} holds for any selection of these parameters. Thus it is not clear that spending a portion of a computational budget on estimating $F_m$ ($0 \leq m \leq \nu$) will yield a smaller solution error than solving the MAP problem on a denser grid (smaller $\delta$) for a fixed parameters, with the same total computational budget. The $\mathrm{IWP}(\sigma^2 \breve{\Gamma},\nu)$ class of priors thus seem like a good default choice ($F_m = 0$, $0 \leq m \leq \nu$).

Nevertheless, the parameters could in principle be selected to optimise the constant appearing in Theorem \ref{thm:convergence}. That is, solving the following optimisation problem
\begin{equation}
\!\min_{F_0,\ldots,F_\nu}  c_4(y^*,\nu,f_i,r) \norm[0]{y^*}^2,
\end{equation}
which unfortunately appears to be intractable in general. However, it might be a good idea to use the the second factor, $\norm[0]{y^*}^2$ as a proxy. For instance, consider solving the ODE in \eqref{eq:test_ode} again, but this time with the prior set to $\mathrm{IOUP}(\lambda,1,\nu)$. In this case, $\mathcal{A} = D^{\nu+1} - \lambda D^\nu$, and the RKHS norm becomes
\begin{equation}\label{eq:eq:rkhs_norm_test_ioup}
\norm{y^*}_\mathbb{Y}^2 = y_0^2  \sum_{m = 0}^\nu \lambda^{2m},
\end{equation}
which is strictly smaller than the RKHS norm obtained by $\mathrm{IWP}(\mathrm{I},\nu)$ in \eqref{eq:rkhs_norm_test_iwp}.

\section{Numerical Examples}\label{sec:numerical_examples}
In this section, the MAP estimate as implemented by the iterated extended Kalmans smoother (IEKS) is compared to the methods of \cite{Schober2019} (EKS0), and \cite{Tronarp2019c} (EKS1). In particular the convergence rates of the MAP estimator from Section \ref{sec:convergence} are verified, which appear to generalise to the other methods as well.

In Sections \ref{sec:logistic}, \ref{sec:riccati}, and \ref{sec:fitzhughnagumo} the logistic equation, Riccati equation, and the Fitz--Hugh--Nagumo model are investigated, respectively. The vector field is a polynomial in these cases, which means it is infinitely many times differentiable and Assumption \ref{ass:smooth} is satisfied for any $\nu \geq 1$. Lastly, in Section \ref{sec:nonsmooth}, a case where the vector field is only continuous is given, which means that Assumption \ref{ass:smooth} is violated for any $\nu \geq 1$.

\subsection{The Logistic Equation}\label{sec:logistic}
Consider the logistic equation
\begin{equation*}
\dot{y}(t) = 10 y(t)(1 - y(t)), \quad y(0) = y_0 = 15/100,
\end{equation*}
which has the following solution.
\begin{equation*}
y(t) = \frac{\exp(10t)}{\exp(10t) + 1/y_0 - 1}.
\end{equation*}
The approximate solutions are computed by EKS0, EKS1, and IEKS on the interval $[0,1]$ on a uniform, dense using, grid with interval length $2^{-12}$ using a prior in the class $\mathrm{IWP}(\mathrm{I},\nu)$, $\nu = 1,\ldots,4$. The filter updates only occur on a decimation of this dense grid by a factor of $2^{3+m}, \ m = 1,\ldots,8$, which yields the fill-distances $\delta_m = 2^{m-10}, \ m = 1,\ldots,8$. The $\mathcal{L}_\infty$ error of the zeroth and first derivative estimates of the methods are computed on the dense grid and compared to $\delta^\nu$ and $\delta^{\nu-1/2}$ (predicted rates), respectively. The errors of the approximate solutions versus fill-distance are shown in Figure \ref{fig:logistic_convergence} and it appears that EKS0, EKS1, and IEKS all attain at worst the predicted rates once $\delta$ is small enough. It appears the rate for EKS1/IEKS tapers off for $\nu = 4$ and small $\delta$. However, it can be verified that this is due to numerical instability when computing the smoothing gains as the prediction covariances $\Sigma_F(t_n^-)$ become numerically singular for too small $h_n$ (see \eqref{eq:dt_smoothing_gain}). The results are similar for the derivative of the approximate solution, see Figure \ref{fig:d1_logistic_convergence}.

\begin{figure}[t!]
\centering
\begin{tikzpicture}
  \begin{groupplot}
    [group style={group size= 2 by 2, horizontal sep=0.9cm}, height = 0.2\textwidth, width=0.5\textwidth]

    \nextgroupplot[
    width	= 0.25\textwidth,
    ymode = log,
    xmode = log,
    xmin = 0.002, xmax = 0.25,
    ymin = 1e-12, ymax = 1,
    grid = major,
    xtick = {1e-3,1e-2,1e-1},
    ytick = {1e-12,1e-9,1e-6,1e-3,1e-0},
    ticklabel style = {font=\tiny},
    xlabel style = {font=\scriptsize},
    ylabel style = {font=\scriptsize},
    ylabel = {$\abs[0]{\hat{y} - y^* }_{H^0_\infty}$},
    title style = {font=\footnotesize},
    title = {$\nu = 1$},
    ]

    \addplot[
      draw = black,
      line width = 0.8pt,
    ]
    table[
      x = h,
      y = 1,
    ]{logistic_refs.txt};

    \addplot[
      mark = *,
      mark repeat = {1},
      mark options = {scale=0.7,fill=red!50},
      draw = red!50,
      line width = 0.8pt,
    ]
    table[
      x = h,
      y = 1,
    ]{logistic_eks0_error.txt};

    \addplot[
      mark = square*,
      mark repeat = {1},
      mark options = {scale=0.7,fill=blue!50},
      draw = blue!50,
      line width = 0.8pt,
    ]
    table[
      x = h,
      y = 1,
    ]{logistic_eks1_error.txt};

    \addplot[
      mark = star,
      mark repeat = {1},
      mark options = {scale=0.7,fill=green!50},
      draw = green!50,
      line width = 0.8pt,
    ]
    table[
      x = h,
      y = 1,
    ]{logistic_ieks_error.txt};

    \nextgroupplot[
    width	= 0.25\textwidth,
    ymode = log,
    xmode = log,
    xmin = 0.002, xmax = 0.25,
    ymin = 1e-12, ymax = 1,
    grid = major,
    xtick = {1e-3,1e-2,1e-1},
    ytick = {1e-12,1e-9,1e-6,1e-3,1e-0},
    ticklabel style = {font=\tiny},
    xlabel style = {font=\scriptsize},
    ylabel style = {font=\scriptsize},
    title style = {font=\footnotesize},
    title = {$\nu = 2$},
    ]

    \addplot[
      draw = black,
      line width = 0.8pt,
    ]
    table[
      x = h,
      y = 2,
    ]{logistic_refs.txt};

    \addplot[
      mark = *,
      mark repeat = {1},
      mark options = {scale=0.7,fill=red!50},
      draw = red!50,
      line width = 0.8pt,
    ]
    table[
      x = h,
      y = 2,
    ]{logistic_eks0_error.txt};

    \addplot[
      mark = square*,
      mark repeat = {1},
      mark options = {scale=0.7,fill=blue!50},
      draw = blue!50,
      line width = 0.8pt,
    ]
    table[
      x = h,
      y = 2,
    ]{logistic_eks1_error.txt};

    \addplot[
      mark = star,
      mark repeat = {1},
      mark options = {scale=0.7,fill=green!50},
      draw = green!50,
      line width = 0.8pt,
    ]
    table[
      x = h,
      y = 2,
    ]{logistic_ieks_error.txt};

    \nextgroupplot[
    width	= 0.25\textwidth,
    ymode = log,
    xmode = log,
    xmin = 0.002, xmax = 0.25,
    ymin = 1e-12, ymax = 1,
    grid = major,
    xtick = {1e-3,1e-2,1e-1},
    ytick = {1e-12,1e-9,1e-6,1e-3,1e-0},
    ticklabel style = {font=\tiny},
    xlabel style = {font=\scriptsize},
    ylabel style = {font=\scriptsize},
    xlabel = {$\delta$},
    ylabel = {$\abs[0]{\hat{y} - y^* }_{H^0_\infty}$},
    title style = {font=\footnotesize},
    title = {$\nu = 3$},
    ]

    \addplot[
      draw = black,
      line width = 0.8pt,
    ]
    table[
      x = h,
      y = 3,
    ]{logistic_refs.txt};

    \addplot[
      mark = *,
      mark repeat = {1},
      mark options = {scale=0.7,fill=red!50},
      draw = red!50,
      line width = 0.8pt,
    ]
    table[
      x = h,
      y = 3,
    ]{logistic_eks0_error.txt};

    \addplot[
      mark = square*,
      mark repeat = {1},
      mark options = {scale=0.7,fill=blue!50},
      draw = blue!50,
      line width = 0.8pt,
    ]
    table[
      x = h,
      y = 3,
    ]{logistic_eks1_error.txt};

    \addplot[
      mark = star,
      mark repeat = {1},
      mark options = {scale=0.7,fill=green!50},
      draw = green!50,
      line width = 0.8pt,
    ]
    table[
      x = h,
      y = 3,
    ]{logistic_ieks_error.txt};

    \coordinate (c1) at (rel axis cs:0,1);

    \nextgroupplot[
    width	= 0.25\textwidth,
    ymode = log,
    xmode = log,
    xmin = 0.002, xmax = 0.25,
    ymin = 1e-12, ymax = 1,
    grid = major,
    xtick = {1e-3,1e-2,1e-1},
    ytick = {1e-12,1e-9,1e-6,1e-3,1e-0},
    ticklabel style = {font=\tiny},
    xlabel style = {font=\scriptsize},
    ylabel style = {font=\scriptsize},
    xlabel = {$\delta$},
    title style = {font=\footnotesize},
    title = {$\nu = 4$},
    legend style={at={($(0,0)+(1cm,1cm)$)},legend columns=4,fill=none,draw=black,anchor=center,align=center},
    legend to name=fred,
    legend style = {font=\footnotesize},
    ]

    \addplot[
      draw = black,
      line width = 0.8pt,
    ]
    table[
      x = h,
      y = 4,
    ]{logistic_refs.txt};

    \addplot[
      mark = *,
      mark repeat = {1},
      mark options = {scale=0.7,fill=red!50},
      draw = red!50,
      line width = 0.8pt,
    ]
    table[
      x = h,
      y = 4,
    ]{logistic_eks0_error.txt};

    \addplot[
      mark = square*,
      mark repeat = {1},
      mark options = {scale=0.7,fill=blue!50},
      draw = blue!50,
      line width = 0.8pt,
    ]
    table[
      x = h,
      y = 4,
    ]{logistic_eks1_error.txt};

    \addplot[
      mark = star,
      mark repeat = {1},
      mark options = {scale=0.7,fill=green!50},
      draw = green!50,
      line width = 0.8pt,
    ]
    table[
      x = h,
      y = 4,
    ]{logistic_ieks_error.txt};

    \coordinate (c2) at (rel axis cs:1,1);

    \addlegendentry{$\delta^\nu$};
    \addlegendentry{$\mathsf{EKS0}$};
    \addlegendentry{$\mathsf{EKS1}$};
    \addlegendentry{$\mathsf{IEKS}$};

  \end{groupplot}
  \coordinate (c3) at ($(c1)!.5!(c2)$);
    \node[below] at (c3 |- current bounding box.south)
      {\pgfplotslegendfromname{fred}};
\end{tikzpicture}
\caption{$\mathcal{L}_\infty$ error of the solution estimate as produced by EKS0 (red), EKS1 (blue), IEKS (green), and the predicted MAP rate $\delta^\nu$ (black), versus fill-distance.}\label{fig:logistic_convergence}
\end{figure}
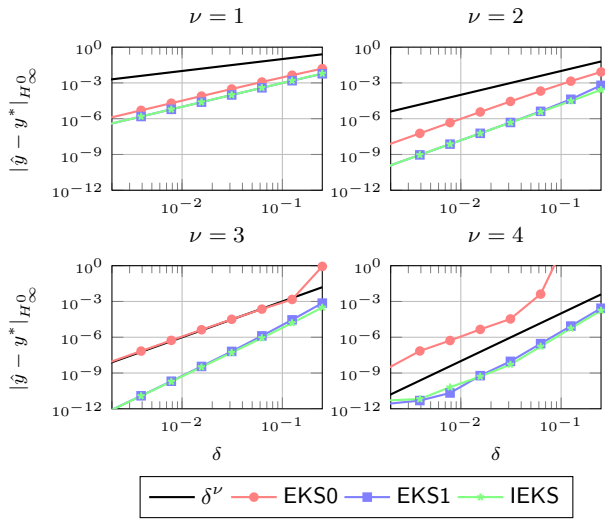

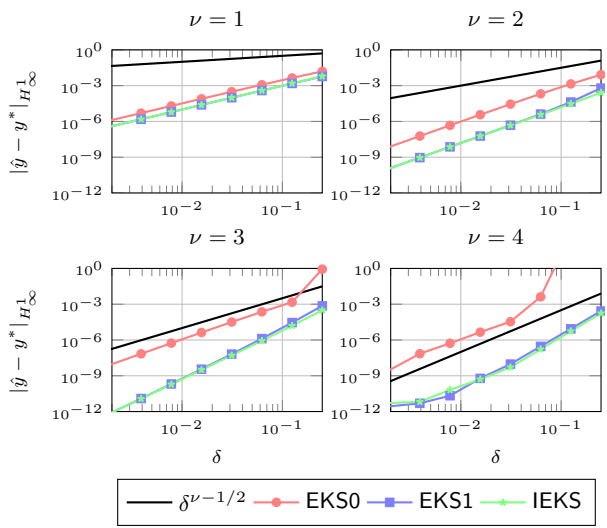
\begin{figure}[t!]
\centering
\begin{tikzpicture}
  \begin{groupplot}
    [group style={group size= 2 by 2, horizontal sep=0.9cm}, height = 0.2\textwidth, width=0.5\textwidth]

    \nextgroupplot[
    width	= 0.25\textwidth,
    ymode = log,
    xmode = log,
    xmin = 0.002, xmax = 0.25,
    ymin = 1e-12, ymax = 1,
    grid = major,
    xtick = {1e-3,1e-2,1e-1},
    ytick = {1e-12,1e-9,1e-6,1e-3,1e-0},
    ticklabel style = {font=\tiny},
    xlabel style = {font=\scriptsize},
    ylabel style = {font=\scriptsize},
    ylabel = {$\abs[0]{ \hat{y} - y^* }_{H^1_\infty}$},
    title style = {font=\footnotesize},
    title = {$\nu = 1$},
    ]

    \addplot[
      draw = black,
      line width = 0.8pt,
    ]
    table[
      x = h,
      y = 1,
    ]{d1_logistic_refs.txt};

    \addplot[
      mark = *,
      mark repeat = {1},
      mark options = {scale=0.7,fill=red!50},
      draw = red!50,
      line width = 0.8pt,
    ]
    table[
      x = h,
      y = 1,
    ]{d1_logistic_eks0_error.txt};

    \addplot[
      mark = square*,
      mark repeat = {1},
      mark options = {scale=0.7,fill=blue!50},
      draw = blue!50,
      line width = 0.8pt,
    ]
    table[
      x = h,
      y = 1,
    ]{d1_logistic_eks1_error.txt};

    \addplot[
      mark = star,
      mark repeat = {1},
      mark options = {scale=0.7,fill=green!50},
      draw = green!50,
      line width = 0.8pt,
    ]
    table[
      x = h,
      y = 1,
    ]{d1_logistic_ieks_error.txt};

    \nextgroupplot[
    width	= 0.25\textwidth,
    ymode = log,
    xmode = log,
    xmin = 0.002, xmax = 0.25,
    ymin = 1e-12, ymax = 1,
    grid = major,
    xtick = {1e-3,1e-2,1e-1},
    ytick = {1e-12,1e-9,1e-6,1e-3,1e-0},
    ticklabel style = {font=\tiny},
    xlabel style = {font=\scriptsize},
    ylabel style = {font=\scriptsize},
    title style = {font=\footnotesize},
    title = {$\nu = 2$},
    ]

    \addplot[
      draw = black,
      line width = 0.8pt,
    ]
    table[
      x = h,
      y = 2,
    ]{d1_logistic_refs.txt};

    \addplot[
      mark = *,
      mark repeat = {1},
      mark options = {scale=0.7,fill=red!50},
      draw = red!50,
      line width = 0.8pt,
    ]
    table[
      x = h,
      y = 2,
    ]{d1_logistic_eks0_error.txt};

    \addplot[
      mark = square*,
      mark repeat = {1},
      mark options = {scale=0.7,fill=blue!50},
      draw = blue!50,
      line width = 0.8pt,
    ]
    table[
      x = h,
      y = 2,
    ]{d1_logistic_eks1_error.txt};

    \addplot[
      mark = star,
      mark repeat = {1},
      mark options = {scale=0.7,fill=green!50},
      draw = green!50,
      line width = 0.8pt,
    ]
    table[
      x = h,
      y = 2,
    ]{d1_logistic_ieks_error.txt};

    \nextgroupplot[
    width	= 0.25\textwidth,
    ymode = log,
    xmode = log,
    xmin = 0.002, xmax = 0.25,
    ymin = 1e-12, ymax = 1,
    grid = major,
    xtick = {1e-3,1e-2,1e-1},
    ytick = {1e-12,1e-9,1e-6,1e-3,1e-0},
    ticklabel style = {font=\tiny},
    xlabel style = {font=\scriptsize},
    ylabel style = {font=\scriptsize},
    xlabel = {$\delta$},
    ylabel = {$\abs[0]{\hat{y} - y^* }_{H^1_\infty}$},
    title style = {font=\footnotesize},
    title = {$\nu = 3$},
    ]

    \addplot[
      draw = black,
      line width = 0.8pt,
    ]
    table[
      x = h,
      y = 3,
    ]{d1_logistic_refs.txt};

    \addplot[
      mark = *,
      mark repeat = {1},
      mark options = {scale=0.7,fill=red!50},
      draw = red!50,
      line width = 0.8pt,
    ]
    table[
      x = h,
      y = 3,
    ]{d1_logistic_eks0_error.txt};

    \addplot[
      mark = square*,
      mark repeat = {1},
      mark options = {scale=0.7,fill=blue!50},
      draw = blue!50,
      line width = 0.8pt,
    ]
    table[
      x = h,
      y = 3,
    ]{d1_logistic_eks1_error.txt};

    \addplot[
      mark = star,
      mark repeat = {1},
      mark options = {scale=0.7,fill=green!50},
      draw = green!50,
      line width = 0.8pt,
    ]
    table[
      x = h,
      y = 3,
    ]{d1_logistic_ieks_error.txt};

        \coordinate (c1) at (rel axis cs:0,1);

    \nextgroupplot[
    width	= 0.25\textwidth,
    ymode = log,
    xmode = log,
    xmin = 0.002, xmax = 0.25,
    ymin = 1e-12, ymax = 1,
    grid = major,
    xtick = {1e-3,1e-2,1e-1},
    ytick = {1e-12,1e-9,1e-6,1e-3,1e-0},
    ticklabel style = {font=\tiny},
    xlabel style = {font=\scriptsize},
    ylabel style = {font=\scriptsize},
    xlabel = {$\delta$},
    title style = {font=\footnotesize},
    title = {$\nu = 4$},
    legend style={at={($(0,0)+(1cm,1cm)$)},legend columns=4,fill=none,draw=black,anchor=center,align=center},
    legend to name=fred,
    legend style = {font=\footnotesize},
    ]

    \addplot[
      draw = black,
      line width = 0.8pt,
    ]
    table[
      x = h,
      y = 4,
    ]{d1_logistic_refs.txt};

    \addplot[
      mark = *,
      mark repeat = {1},
      mark options = {scale=0.7,fill=red!50},
      draw = red!50,
      line width = 0.8pt,
    ]
    table[
      x = h,
      y = 4,
    ]{d1_logistic_eks0_error.txt};

    \addplot[
      mark = square*,
      mark repeat = {1},
      mark options = {scale=0.7,fill=blue!50},
      draw = blue!50,
      line width = 0.8pt,
    ]
    table[
      x = h,
      y = 4,
    ]{d1_logistic_eks1_error.txt};

    \addplot[
      mark = star,
      mark repeat = {1},
      mark options = {scale=0.7,fill=green!50},
      draw = green!50,
      line width = 0.8pt,
    ]
    table[
      x = h,
      y = 4,
    ]{d1_logistic_ieks_error.txt};

    \coordinate (c2) at (rel axis cs:1,1);

    \addlegendentry{$\delta^{\nu-1/2}$};
    \addlegendentry{$\mathsf{EKS0}$};
    \addlegendentry{$\mathsf{EKS1}$};
    \addlegendentry{$\mathsf{IEKS}$};

  \end{groupplot}
  \coordinate (c3) at ($(c1)!.5!(c2)$);
    \node[below] at (c3 |- current bounding box.south)
      {\pgfplotslegendfromname{fred}};
\end{tikzpicture}
\caption{$\mathcal{L}_\infty$ error of the derivative estimate as produced by EKS0 (red), EKS1 (blue), IEKS (green), and the predicted MAP rate $\delta^{\nu-1/2}$ (black), versus fill-distance.}\label{fig:d1_logistic_convergence}
\end{figure}

Solution estimates by EKS0 and EKS1 are illustrutated in Figure \ref{fig:logistic_demo1} for $\nu = 2$ and $\delta = 2^{-4}$  (IEKS is very similar EKS1 and therefore not shown). The credible intervals are calibrated via the quasi maximum likelihood method, see \ref{app:uncertainty_calibration}. While both methods produce credible intervals that cover the true solution, those of EKS1 are much tighter. That is, here the EKS1 estimate is of higher quality than that of EKS0, which is particularly clear when looking at the derivative estimates.

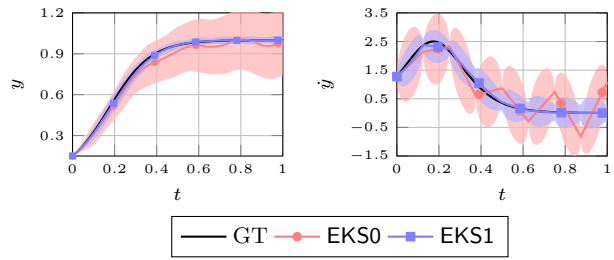
\begin{figure}[t!]
\centering
\begin{tikzpicture}
  \begin{groupplot}
    [group style={group size= 2 by 1, horizontal sep=1.5cm}, height = 0.2\textwidth, width=0.5\textwidth]

    \nextgroupplot[
    width	= 0.25\textwidth,
    xmin = 0, xmax = 1,
    ymin = 0.15, ymax = 1.2,
    grid = major,
    xtick = {0,0.2,0.4,0.6,0.8,1},
    ytick = {0,0.3,0.6,0.9,1.2},
    ticklabel style = {font=\tiny},
    xlabel style = {font=\scriptsize},
    ylabel style = {font=\scriptsize},
    xlabel = {$t$},
    ylabel = {$y$},
    title style = {font=\footnotesize},
    ]

    \addplot[
      draw = black,
      line width = 0.8pt,
    ]
    table[
      x = t,
      y = y,
    ]{logistic/logistic_demo1_gt.txt};

    \addplot[
      mark = *,
      mark repeat = {200},
      mark options = {scale=0.5,fill=red!50},
      draw = red!50,
      line width = 0.7pt,
      ]
      table[
        x = t,
        y = eks0,
      ]{logistic/logistic_demo1_eks0.txt};

     \addplot[name path=upper_eks0,draw=none] table[x=t,y=eks0ub]{logistic/logistic_demo1_eks0.txt};
     \addplot[name path=lower_eks0,draw=none] table[x=t,y=eks0lb]{logistic/logistic_demo1_eks0.txt};
     \addplot[fill=red!30,fill opacity=0.7] fill between[of=upper_eks0 and lower_eks0];

      \addplot[
        mark = square*,
        mark repeat = {200},
        mark options = {scale=0.5,fill=blue!50},
        draw = blue!50,
        line width = 0.7pt,
        ]
        table[
          x = t,
          y = eks1,
        ]{logistic/logistic_demo1_eks1.txt};

       \addplot[name path=upper_eks1,draw=none] table[x=t,y=eks1ub]{logistic/logistic_demo1_eks1.txt};
       \addplot[name path=lower_eks1,draw=none] table[x=t,y=eks1lb]{logistic/logistic_demo1_eks1.txt};
       \addplot[fill=blue!30,fill opacity=0.7] fill between[of=upper_eks1 and lower_eks1];

       \coordinate (c1) at (rel axis cs:0,1);

    \nextgroupplot[
    width	= 0.25\textwidth,
    xmin = 0, xmax = 1,
    ymin = -1.5, ymax = 3.5,
    grid = major,
    xtick = {0,0.2,0.4,0.6,0.8,1},
    ytick = {-1.5,-0.5,0.5,1.5,2.5,3.5},
    ticklabel style = {font=\tiny},
    xlabel style = {font=\scriptsize},
    ylabel style = {font=\scriptsize},
    xlabel = {$t$},
    ylabel = {$\dot{y}$},
    title style = {font=\footnotesize},
    legend style={at={($(0,0)+(1cm,1cm)$)},legend columns=3,fill=none,draw=black,anchor=center,align=center},
    legend to name=fred,
    legend style = {font=\footnotesize},
    ]

    \addplot[
      draw = black,
      line width = 0.8pt,
    ]
    table[
      x = t,
      y = dy,
    ]{logistic/logistic_demo1_gt.txt};

    \addplot[
      mark = *,
      mark repeat = {200},
      mark options = {scale=0.7,fill=red!50},
      draw = red!50,
      line width = 0.8pt,
      ]
      table[
        x = t,
        y = deks0,
      ]{logistic/logistic_demo1_eks0.txt};

      \addplot[name path=upper_deks0,draw=none,forget plot] table[x=t,y=deks0ub]{logistic/logistic_demo1_eks0.txt};
      \addplot[name path=lower_deks0,draw=none,forget plot] table[x=t,y=deks0lb]{logistic/logistic_demo1_eks0.txt};
      \addplot[fill=red!30, fill opacity=0.7,forget plot] fill between[of=upper_deks0 and lower_deks0];

      \addplot[
        mark = square*,
        mark repeat = {200},
        mark options = {scale=0.7,fill=blue!50},
        draw = blue!50,
        line width = 0.8pt,
        ]
        table[
          x = t,
          y = deks1,
        ]{logistic/logistic_demo1_eks1.txt};

        \addplot[name path=upper_deks1,draw=none,forget plot] table[x=t,y=deks1ub]{logistic/logistic_demo1_eks1.txt};
        \addplot[name path=lower_deks1,draw=none,forget plot] table[x=t,y=deks1lb]{logistic/logistic_demo1_eks1.txt};
        \addplot[fill=blue!30, fill opacity=0.7,forget plot] fill between[of=upper_deks1 and lower_deks1];

        \coordinate (c2) at (rel axis cs:1,1);

    \addlegendentry{$\mathrm{GT}$};
    \addlegendentry{$\mathsf{EKS0}$};
    \addlegendentry{$\mathsf{EKS1}$};

  \end{groupplot}
  \coordinate (c3) at ($(c1)!.5!(c2)$);
    \node[below] at (c3 |- current bounding box.south)
      {\pgfplotslegendfromname{fred}};
\end{tikzpicture}
\caption{Reconstruction of the logistic map (left) and its derivative (right) with two standard deviation credible bands for EKS0 (red) and EKS1 (blue).}\label{fig:logistic_demo1}
\end{figure}

\subsection{A Riccati Equation}\label{sec:riccati}
The convergence rates are examined for a Riccati equation as well. That is, consider the following ODE
\begin{equation*}
\dot{y}(t) = -c\frac{y^3(t)}{2}, \quad y(0) = y_0 = 1,
\end{equation*}
which has the following solution
\begin{equation*}
y(t) = \frac{1}{\sqrt{ct + 1/y_0^2}}.
\end{equation*}
Just as for the logistic map, the solution is approximated by EKS0, EKS1, and IEKS on the interval $[0,1]$, using a $\mathrm{IWP}(\mathrm{I},\nu)$, $\nu = 1,\ldots,4$, for various fill-distances $\delta$. The $\mathcal{L}_\infty$ errors of the zeroth and first derivative estimates are shown in Figures \ref{fig:riccati_convergence} and \ref{fig:d1_riccati_convergence}, respectively. The general results are the same as before, EKS1 and IEKS are very similar, and EKS0 is some orders of magnitude worse while still appearing to converge at a similar rate as the former. The numerical instability in the computation of smoothing gains is still present for large $\nu$ and small $\delta$.

Additionally, the output of the solvers for $\nu = 2$ is visualised for step-sizes of $h = 0.125$ and $h = 0.25$  in Figures \ref{fig:riccati_demo1} and \ref{fig:riccati_demo2}, respectively. It can be seen that already for $h = 0.25$, the solution estimate and uncertainty quantification of the IEKS, while EKS0 and EKS1 leave room for improvement in terms of both accuracy and uncertainty quantification.  By halving the step-size EKS1 and IEKS become near identical (wherefore IEKS is not shown in Figure \ref{fig:riccati_demo1}), though the error of the EKS0 is still oscillating quite a bit, particularly for the derivative.

\begin{figure}[t!]
\centering
\begin{tikzpicture}
  \begin{groupplot}
    [group style={group size= 2 by 2, horizontal sep=0.9cm}, height = 0.2\textwidth, width=0.5\textwidth]

    \nextgroupplot[
    width	= 0.25\textwidth,
    ymode = log,
    xmode = log,
    xmin = 0.0039, xmax = 0.25,
    ymin = 1e-12, ymax = 1,
    grid = major,
    xtick = {1e-3,1e-2,1e-1},
    ytick = {1e-12,1e-9,1e-6,1e-3,1e-0},
    ticklabel style = {font=\tiny},
    xlabel style = {font=\scriptsize},
    ylabel style = {font=\scriptsize},
    ylabel = {$\abs[0]{\hat{y} - y^* }_{H^0_\infty}$},
    title style = {font=\footnotesize},
    title = {$\nu = 1$},
    ]

    \addplot[
      draw = black,
      line width = 0.8pt,
    ]
    table[
      x = h,
      y = 1,
    ]{riccati_refs.txt};

    \addplot[
      mark = *,
      mark repeat = {1},
      mark options = {scale=0.7,fill=red!50},
      draw = red!50,
      line width = 0.8pt,
    ]
    table[
      x = h,
      y = 1,
    ]{riccati_eks0_error.txt};

    \addplot[
      mark = square*,
      mark repeat = {1},
      mark options = {scale=0.7,fill=blue!50},
      draw = blue!50,
      line width = 0.8pt,
    ]
    table[
      x = h,
      y = 1,
    ]{riccati_eks1_error.txt};

    \addplot[
      mark = star,
      mark repeat = {1},
      mark options = {scale=0.7,fill=green!50},
      draw = green!50,
      line width = 0.8pt,
    ]
    table[
      x = h,
      y = 1,
    ]{riccati_ieks_error.txt};

    \nextgroupplot[
    width	= 0.25\textwidth,
    ymode = log,
    xmode = log,
    xmin = 0.0039, xmax = 0.25,
    ymin = 1e-12, ymax = 1,
    grid = major,
    xtick = {1e-3,1e-2,1e-1},
    ytick = {1e-12,1e-9,1e-6,1e-3,1e-0},
    ticklabel style = {font=\tiny},
    xlabel style = {font=\scriptsize},
    ylabel style = {font=\scriptsize},
    title style = {font=\footnotesize},
    title = {$\nu = 2$},
    ]

    \addplot[
      draw = black,
      line width = 0.8pt,
    ]
    table[
      x = h,
      y = 2,
    ]{riccati_refs.txt};

    \addplot[
      mark = *,
      mark repeat = {1},
      mark options = {scale=0.7,fill=red!50},
      draw = red!50,
      line width = 0.8pt,
    ]
    table[
      x = h,
      y = 2,
    ]{riccati_eks0_error.txt};

    \addplot[
      mark = square*,
      mark repeat = {1},
      mark options = {scale=0.7,fill=blue!50},
      draw = blue!50,
      line width = 0.8pt,
    ]
    table[
      x = h,
      y = 2,
    ]{riccati_eks1_error.txt};

    \addplot[
      mark = star,
      mark repeat = {1},
      mark options = {scale=0.7,fill=green!50},
      draw = green!50,
      line width = 0.8pt,
    ]
    table[
      x = h,
      y = 2,
    ]{riccati_ieks_error.txt};

    \nextgroupplot[
    width	= 0.25\textwidth,
    ymode = log,
    xmode = log,
    xmin = 0.0039, xmax = 0.25,
    ymin = 1e-12, ymax = 1,
    grid = major,
    xtick = {1e-3,1e-2,1e-1},
    ytick = {1e-12,1e-9,1e-6,1e-3,1e-0},
    ticklabel style = {font=\tiny},
    xlabel style = {font=\scriptsize},
    ylabel style = {font=\scriptsize},
    xlabel = {$\delta$},
    ylabel = {$\abs[0]{\hat{y} - y^* }_{H^0_\infty}$},
    title style = {font=\footnotesize},
    title = {$\nu = 3$},
    ]

    \addplot[
      draw = black,
      line width = 0.8pt,
    ]
    table[
      x = h,
      y = 3,
    ]{riccati_refs.txt};

    \addplot[
      mark = *,
      mark repeat = {1},
      mark options = {scale=0.7,fill=red!50},
      draw = red!50,
      line width = 0.8pt,
    ]
    table[
      x = h,
      y = 3,
    ]{riccati_eks0_error.txt};

    \addplot[
      mark = square*,
      mark repeat = {1},
      mark options = {scale=0.7,fill=blue!50},
      draw = blue!50,
      line width = 0.8pt,
    ]
    table[
      x = h,
      y = 3,
    ]{riccati_eks1_error.txt};

    \addplot[
      mark = star,
      mark repeat = {1},
      mark options = {scale=0.7,fill=green!50},
      draw = green!50,
      line width = 0.8pt,
    ]
    table[
      x = h,
      y = 3,
    ]{riccati_ieks_error.txt};

    \coordinate (c1) at (rel axis cs:0,1);

    \nextgroupplot[
    width	= 0.25\textwidth,
    ymode = log,
    xmode = log,
    xmin = 0.0039, xmax = 0.25,
    ymin = 1e-12, ymax = 1,
    grid = major,
    xtick = {1e-3,1e-2,1e-1},
    ytick = {1e-12,1e-9,1e-6,1e-3,1e-0},
    ticklabel style = {font=\tiny},
    xlabel style = {font=\scriptsize},
    ylabel style = {font=\scriptsize},
    xlabel = {$\delta$},
    title style = {font=\footnotesize},
    title = {$\nu = 4$},
    legend style={at={($(0,0)+(1cm,1cm)$)},legend columns=4,fill=none,draw=black,anchor=center,align=center},
    legend to name=fred,
    legend style = {font=\footnotesize},
    ]

    \addplot[
      draw = black,
      line width = 0.8pt,
    ]
    table[
      x = h,
      y = 4,
    ]{riccati_refs.txt};

    \addplot[
      mark = *,
      mark repeat = {1},
      mark options = {scale=0.7,fill=red!50},
      draw = red!50,
      line width = 0.8pt,
    ]
    table[
      x = h,
      y = 4,
    ]{riccati_eks0_error.txt};

    \addplot[
      mark = square*,
      mark repeat = {1},
      mark options = {scale=0.7,fill=blue!50},
      draw = blue!50,
      line width = 0.8pt,
    ]
    table[
      x = h,
      y = 4,
    ]{riccati_eks1_error.txt};

    \addplot[
      mark = star,
      mark repeat = {1},
      mark options = {scale=0.7,fill=green!50},
      draw = green!50,
      line width = 0.8pt,
    ]
    table[
      x = h,
      y = 4,
    ]{riccati_ieks_error.txt};

    \coordinate (c2) at (rel axis cs:1,1);

    \addlegendentry{$\delta^\nu$};
    \addlegendentry{$\mathsf{EKS0}$};
    \addlegendentry{$\mathsf{EKS1}$};
    \addlegendentry{$\mathsf{IEKS}$};

  \end{groupplot}
  \coordinate (c3) at ($(c1)!.5!(c2)$);
    \node[below] at (c3 |- current bounding box.south)
      {\pgfplotslegendfromname{fred}};
\end{tikzpicture}
\caption{$\mathcal{L}_\infty$ error of the solution estimate as produced by EKS0 (red), EKS1 (blue), IEKS (green), and the predicted MAP rate $\delta^\nu$ (black), versus fill-distance.}\label{fig:riccati_convergence}
\end{figure}
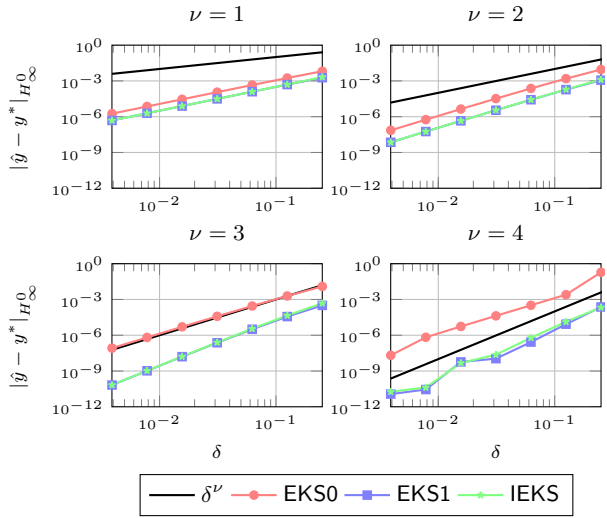

\begin{figure}[t!]
\centering
\begin{tikzpicture}
  \begin{groupplot}
    [group style={group size= 2 by 2, horizontal sep=0.9cm}, height = 0.2\textwidth, width=0.5\textwidth]

    \nextgroupplot[
    width	= 0.25\textwidth,
    ymode = log,
    xmode = log,
    xmin = 0.0039, xmax = 0.25,
    ymin = 1e-9, ymax = 1,
    grid = major,
    xtick = {1e-3,1e-2,1e-1},
    ytick = {1e-9,1e-6,1e-3,1e-0},
    ticklabel style = {font=\tiny},
    xlabel style = {font=\scriptsize},
    ylabel style = {font=\scriptsize},
    ylabel = {$\abs[0]{ \hat{y} - y^* }_{H^1_\infty}$},
    title style = {font=\footnotesize},
    title = {$\nu = 1$},
    ]

    \addplot[
      draw = black,
      line width = 0.8pt,
    ]
    table[
      x = h,
      y = 1,
    ]{d1_riccati_refs.txt};

    \addplot[
      mark = *,
      mark repeat = {1},
      mark options = {scale=0.7,fill=red!50},
      draw = red!50,
      line width = 0.8pt,
    ]
    table[
      x = h,
      y = 1,
    ]{d1_riccati_eks0_error.txt};

    \addplot[
      mark = square*,
      mark repeat = {1},
      mark options = {scale=0.7,fill=blue!50},
      draw = blue!50,
      line width = 0.8pt,
    ]
    table[
      x = h,
      y = 1,
    ]{d1_riccati_eks1_error.txt};

    \addplot[
      mark = star,
      mark repeat = {1},
      mark options = {scale=0.7,fill=green!50},
      draw = green!50,
      line width = 0.8pt,
    ]
    table[
      x = h,
      y = 1,
    ]{d1_riccati_ieks_error.txt};

    \nextgroupplot[
    width	= 0.25\textwidth,
    ymode = log,
    xmode = log,
    xmin = 0.0039, xmax = 0.25,
    ymin = 1e-9, ymax = 1,
    grid = major,
    xtick = {1e-3,1e-2,1e-1},
    ytick = {1e-9,1e-6,1e-3,1e-0},
    ticklabel style = {font=\tiny},
    xlabel style = {font=\scriptsize},
    ylabel style = {font=\scriptsize},
    title style = {font=\footnotesize},
    title = {$\nu = 2$},
    ]

    \addplot[
      draw = black,
      line width = 0.8pt,
    ]
    table[
      x = h,
      y = 2,
    ]{d1_riccati_refs.txt};

    \addplot[
      mark = *,
      mark repeat = {1},
      mark options = {scale=0.7,fill=red!50},
      draw = red!50,
      line width = 0.8pt,
    ]
    table[
      x = h,
      y = 2,
    ]{d1_riccati_eks0_error.txt};

    \addplot[
      mark = square*,
      mark repeat = {1},
      mark options = {scale=0.7,fill=blue!50},
      draw = blue!50,
      line width = 0.8pt,
    ]
    table[
      x = h,
      y = 2,
    ]{d1_riccati_eks1_error.txt};

    \addplot[
      mark = star,
      mark repeat = {1},
      mark options = {scale=0.7,fill=green!50},
      draw = green!50,
      line width = 0.8pt,
    ]
    table[
      x = h,
      y = 2,
    ]{d1_riccati_ieks_error.txt};

    \nextgroupplot[
    width	= 0.25\textwidth,
    ymode = log,
    xmode = log,
    xmin = 0.0039, xmax = 0.25,
    ymin = 1e-9, ymax = 1,
    grid = major,
    xtick = {1e-3,1e-2,1e-1},
    ytick = {1e-9,1e-6,1e-3,1e-0},
    ticklabel style = {font=\tiny},
    xlabel style = {font=\scriptsize},
    ylabel style = {font=\scriptsize},
    xlabel = {$\delta$},
    ylabel = {$\abs[0]{\hat{y} - y^* }_{H^1_\infty}$},
    title style = {font=\footnotesize},
    title = {$\nu = 3$},
    ]

    \addplot[
      draw = black,
      line width = 0.8pt,
    ]
    table[
      x = h,
      y = 3,
    ]{d1_riccati_refs.txt};

    \addplot[
      mark = *,
      mark repeat = {1},
      mark options = {scale=0.7,fill=red!50},
      draw = red!50,
      line width = 0.8pt,
    ]
    table[
      x = h,
      y = 3,
    ]{d1_riccati_eks0_error.txt};

    \addplot[
      mark = square*,
      mark repeat = {1},
      mark options = {scale=0.7,fill=blue!50},
      draw = blue!50,
      line width = 0.8pt,
    ]
    table[
      x = h,
      y = 3,
    ]{d1_riccati_eks1_error.txt};

    \addplot[
      mark = star,
      mark repeat = {1},
      mark options = {scale=0.7,fill=green!50},
      draw = green!50,
      line width = 0.8pt,
    ]
    table[
      x = h,
      y = 3,
    ]{d1_riccati_ieks_error.txt};

        \coordinate (c1) at (rel axis cs:0,1);

    \nextgroupplot[
    width	= 0.25\textwidth,
    ymode = log,
    xmode = log,
    xmin = 0.0039, xmax = 0.25,
    ymin = 1e-9, ymax = 1,
    grid = major,
    xtick = {1e-3,1e-2,1e-1},
    ytick = {1e-9,1e-6,1e-3,1e-0},
    ticklabel style = {font=\tiny},
    xlabel style = {font=\scriptsize},
    ylabel style = {font=\scriptsize},
    xlabel = {$\delta$},
    title style = {font=\footnotesize},
    title = {$\nu = 4$},
    legend style={at={($(0,0)+(1cm,1cm)$)},legend columns=4,fill=none,draw=black,anchor=center,align=center},
    legend to name=fred,
    legend style = {font=\footnotesize},
    ]

    \addplot[
      draw = black,
      line width = 0.8pt,
    ]
    table[
      x = h,
      y = 4,
    ]{d1_riccati_refs.txt};

    \addplot[
      mark = *,
      mark repeat = {1},
      mark options = {scale=0.7,fill=red!50},
      draw = red!50,
      line width = 0.8pt,
    ]
    table[
      x = h,
      y = 4,
    ]{d1_riccati_eks0_error.txt};

    \addplot[
      mark = square*,
      mark repeat = {1},
      mark options = {scale=0.7,fill=blue!50},
      draw = blue!50,
      line width = 0.8pt,
    ]
    table[
      x = h,
      y = 4,
    ]{d1_riccati_eks1_error.txt};

    \addplot[
      mark = star,
      mark repeat = {1},
      mark options = {scale=0.7,fill=green!50},
      draw = green!50,
      line width = 0.8pt,
    ]
    table[
      x = h,
      y = 4,
    ]{d1_riccati_ieks_error.txt};

    \coordinate (c2) at (rel axis cs:1,1);

    \addlegendentry{$\delta^{\nu-1/2}$};
    \addlegendentry{$\mathsf{EKS0}$};
    \addlegendentry{$\mathsf{EKS1}$};
    \addlegendentry{$\mathsf{IEKS}$};

  \end{groupplot}
  \coordinate (c3) at ($(c1)!.5!(c2)$);
    \node[below] at (c3 |- current bounding box.south)
      {\pgfplotslegendfromname{fred}};
\end{tikzpicture}
\caption{$\mathcal{L}_\infty$ error of the derivative estimate as produced by EKS0 (red), EKS1 (blue), IEKS (green), and the predicted MAP rate $\delta^{\nu-1/2}$ (black), versus fill-distance.}\label{fig:d1_riccati_convergence}
\end{figure}

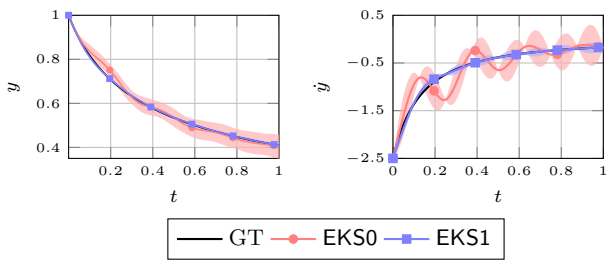
\begin{figure}[t!]
\centering
\begin{tikzpicture}
  \begin{groupplot}
    [group style={group size= 2 by 1, horizontal sep=1.5cm}, height = 0.2\textwidth, width=0.5\textwidth]

    \nextgroupplot[
    width	= 0.25\textwidth,
    xmin = 0, xmax = 1,
    ymin = 0.35, ymax = 1,
    grid = major,
    xtick = {0.2,0.4,0.6,0.8,1},
    ytick = {0.4,0.6,0.8,1},
    ticklabel style = {font=\tiny},
    xlabel style = {font=\scriptsize},
    ylabel style = {font=\scriptsize},
    xlabel = {$t$},
    ylabel = {$y$},
    title style = {font=\footnotesize},
    ]

    \addplot[
      draw = black,
      line width = 0.7pt,
    ]
    table[
      x = t,
      y = y,
    ]{riccati_demo1_gt.txt};

    \addplot[
      mark = *,
      mark repeat = {200},
      mark options = {scale=0.5,fill=red!50},
      draw = red!50,
      line width = 0.6pt,
      ]
      table[
        x = t,
        y = eks0,
      ]{riccati_demo1_eks0.txt};

     \addplot[name path=upper_eks0,draw=none] table[x=t,y=eks0ub]{riccati_demo1_eks0.txt};
     \addplot[name path=lower_eks0,draw=none] table[x=t,y=eks0lb]{riccati_demo1_eks0.txt};
     \addplot[fill=red!30,fill opacity=0.7] fill between[of=upper_eks0 and lower_eks0];

      \addplot[
        mark = square*,
        mark repeat = {200},
        mark options = {scale=0.5,fill=blue!50},
        draw = blue!50,
        line width = 0.6pt,
        ]
        table[
          x = t,
          y = eks1,
        ]{riccati_demo1_eks1.txt};

       \addplot[name path=upper_eks1,draw=none] table[x=t,y=eks1ub]{riccati_demo1_eks1.txt};
       \addplot[name path=lower_eks1,draw=none] table[x=t,y=eks1lb]{riccati_demo1_eks1.txt};
       \addplot[fill=blue!30,fill opacity=1] fill between[of=upper_eks1 and lower_eks1];

       \coordinate (c1) at (rel axis cs:0,1);

    \nextgroupplot[
    width	= 0.25\textwidth,
    xmin = 0, xmax = 1,
    ymin = -2.5, ymax = 0.5,
    grid = major,
    xtick = {0,0.2,0.4,0.6,0.8,1},
    ytick = {-2.5,-1.5,-0.5,0.5},
    ticklabel style = {font=\tiny},
    xlabel style = {font=\scriptsize},
    ylabel style = {font=\scriptsize},
    xlabel = {$t$},
    ylabel = {$\dot{y}$},
    title style = {font=\footnotesize},
    legend style={at={($(0,0)+(1cm,1cm)$)},legend columns=3,fill=none,draw=black,anchor=center,align=center},
    legend to name=fred,
    legend style = {font=\footnotesize},
    ]

    \addplot[
      draw = black,
      line width = 0.8pt,
    ]
    table[
      x = t,
      y = dy,
    ]{riccati_demo1_gt.txt};

    \addplot[
      mark = *,
      mark repeat = {200},
      mark options = {scale=0.7,fill=red!50},
      draw = red!50,
      line width = 0.8pt,
      ]
      table[
        x = t,
        y = deks0,
      ]{riccati_demo1_eks0.txt};

      \addplot[name path=upper_deks0,draw=none,forget plot] table[x=t,y=deks0ub]{riccati_demo1_eks0.txt};
      \addplot[name path=lower_deks0,draw=none,forget plot] table[x=t,y=deks0lb]{riccati_demo1_eks0.txt};
      \addplot[fill=red!30, fill opacity=0.7,forget plot] fill between[of=upper_deks0 and lower_deks0];

      \addplot[
        mark = square*,
        mark repeat = {200},
        mark options = {scale=0.7,fill=blue!50},
        draw = blue!50,
        line width = 0.8pt,
        ]
        table[
          x = t,
          y = deks1,
        ]{riccati_demo1_eks1.txt};

        \addplot[name path=upper_deks1,draw=none,forget plot] table[x=t,y=deks1ub]{riccati_demo1_eks1.txt};
        \addplot[name path=lower_deks1,draw=none,forget plot] table[x=t,y=deks1lb]{riccati_demo1_eks1.txt};
        \addplot[fill=blue!30, fill opacity=0.7,forget plot] fill between[of=upper_deks1 and lower_deks1];

        \coordinate (c2) at (rel axis cs:1,1);

    \addlegendentry{$\mathrm{GT}$};
    \addlegendentry{$\mathsf{EKS0}$};
    \addlegendentry{$\mathsf{EKS1}$};

  \end{groupplot}
  \coordinate (c3) at ($(c1)!.5!(c2)$);
    \node[below] at (c3 |- current bounding box.south)
      {\pgfplotslegendfromname{fred}};
\end{tikzpicture}
\caption{Reconstruction of the Riccati map (left) and its derivative (right) with two standard deviation credible bands for EKS0 (red) and EKS1 (blue), using a step size of $h = 0.125$.}\label{fig:riccati_demo1}
\end{figure}

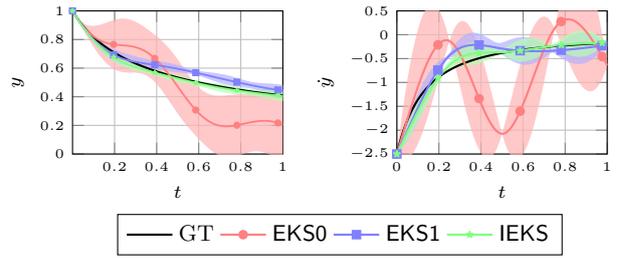
\begin{figure}[t!]
\centering
\begin{tikzpicture}
  \begin{groupplot}
    [group style={group size= 2 by 1, horizontal sep=1.5cm}, height = 0.2\textwidth, width=0.5\textwidth]

    \nextgroupplot[
    width	= 0.25\textwidth,
    xmin = 0, xmax = 1,
    ymin = 0, ymax = 1,
    grid = major,
    xtick = {0.2,0.4,0.6,0.8,1},
    ytick = {0,0.2,0.4,0.6,0.8,1},
    ticklabel style = {font=\tiny},
    xlabel style = {font=\scriptsize},
    ylabel style = {font=\scriptsize},
    xlabel = {$t$},
    ylabel = {$y$},
    title style = {font=\footnotesize},
    ]

    \addplot[
      draw = black,
      line width = 0.8pt,
    ]
    table[
      x = t,
      y = y,
    ]{riccati_demo2_gt.txt};

    \addplot[
      mark = *,
      mark repeat = {200},
      mark options = {scale=0.5,fill=red!50},
      draw = red!50,
      line width = 0.7pt,
      ]
      table[
        x = t,
        y = eks0,
      ]{riccati_demo2_eks0.txt};

     \addplot[name path=upper_eks0,draw=none] table[x=t,y=eks0ub]{riccati_demo2_eks0.txt};
     \addplot[name path=lower_eks0,draw=none] table[x=t,y=eks0lb]{riccati_demo2_eks0.txt};
     \addplot[fill=red!30,fill opacity=0.7] fill between[of=upper_eks0 and lower_eks0];

     \addplot[
       mark = square*,
       mark repeat = {200},
       mark options = {scale=0.5,fill=blue!50},
       draw = blue!50,
       line width = 0.7pt,
       ]
       table[
         x = t,
         y = eks1,
       ]{riccati_demo2_eks1.txt};

      \addplot[name path=upper_eks1,draw=none] table[x=t,y=eks1ub]{riccati_demo2_eks1.txt};
      \addplot[name path=lower_eks1,draw=none] table[x=t,y=eks1lb]{riccati_demo2_eks1.txt};
      \addplot[fill=blue!30,fill opacity=0.7] fill between[of=upper_eks1 and lower_eks1];

      \addplot[
        mark = star,
        mark repeat = {200},
        mark options = {scale=0.5,fill=green!50},
        draw = green!50,
        line width = 0.7pt,
        ]
        table[
          x = t,
          y = ieks,
        ]{riccati_demo2_ieks.txt};

       \addplot[name path=upper_ieks,draw=none] table[x=t,y=ieksub]{riccati_demo2_ieks.txt};
       \addplot[name path=lower_ieks,draw=none] table[x=t,y=iekslb]{riccati_demo2_ieks.txt};
       \addplot[fill=green!30,fill opacity=0.7] fill between[of=upper_ieks and lower_ieks];

       \coordinate (c1) at (rel axis cs:0,1);

    \nextgroupplot[
    width	= 0.25\textwidth,
    xmin = 0, xmax = 1,
    ymin = -2.5, ymax = 0.5,
    grid = major,
    xtick = {0,0.2,0.4,0.6,0.8,1},
    ytick = {-2.5,-2,-1.5,-1,-0.5,0,0.5},
    ticklabel style = {font=\tiny},
    xlabel style = {font=\scriptsize},
    ylabel style = {font=\scriptsize},
    xlabel = {$t$},
    ylabel = {$\dot{y}$},
    title style = {font=\footnotesize},
    legend style={at={($(0,0)+(1cm,1cm)$)},legend columns=4,fill=none,draw=black,anchor=center,align=center},
    legend to name=fred,
    legend style = {font=\footnotesize},
    ]

    \addplot[
      draw = black,
      line width = 0.8pt,
    ]
    table[
      x = t,
      y = dy,
    ]{riccati_demo2_gt.txt};

    \addplot[
      mark = *,
      mark repeat = {200},
      mark options = {scale=0.7,fill=red!50},
      draw = red!50,
      line width = 0.8pt,
      ]
      table[
        x = t,
        y = deks0,
      ]{riccati_demo2_eks0.txt};

      \addplot[name path=upper_deks0,draw=none,forget plot] table[x=t,y=deks0ub]{riccati_demo2_eks0.txt};
      \addplot[name path=lower_deks0,draw=none,forget plot] table[x=t,y=deks0lb]{riccati_demo2_eks0.txt};
      \addplot[fill=red!30, fill opacity=0.7,forget plot] fill between[of=upper_deks0 and lower_deks0];

      \addplot[
        mark = square*,
        mark repeat = {200},
        mark options = {scale=0.7,fill=blue!50},
        draw = blue!50,
        line width = 0.8pt,
        ]
        table[
          x = t,
          y = deks1,
        ]{riccati_demo2_eks1.txt};

        \addplot[name path=upper_deks1,draw=none,forget plot] table[x=t,y=deks1ub]{riccati_demo2_eks1.txt};
        \addplot[name path=lower_deks1,draw=none,forget plot] table[x=t,y=deks1lb]{riccati_demo2_eks1.txt};
        \addplot[fill=blue!30, fill opacity=0.7,forget plot] fill between[of=upper_deks1 and lower_deks1];

        \addplot[
          mark = star,
          mark repeat = {200},
          mark options = {scale=0.7,fill=green!50},
          draw = green!50,
          line width = 0.8pt,
          ]
          table[
            x = t,
            y = dieks,
          ]{riccati_demo2_ieks.txt};

          \addplot[name path=upper_dieks,draw=none,forget plot] table[x=t,y=dieksub]{riccati_demo2_ieks.txt};
          \addplot[name path=lower_dieks,draw=none,forget plot] table[x=t,y=diekslb]{riccati_demo2_ieks.txt};
          \addplot[fill=green!30, fill opacity=0.7,forget plot] fill between[of=upper_dieks and lower_dieks];

        \coordinate (c2) at (rel axis cs:1,1);

    \addlegendentry{$\mathrm{GT}$};
    \addlegendentry{$\mathsf{EKS0}$};
    \addlegendentry{$\mathsf{EKS1}$};
    \addlegendentry{$\mathsf{IEKS}$};

  \end{groupplot}
  \coordinate (c3) at ($(c1)!.5!(c2)$);
    \node[below] at (c3 |- current bounding box.south)
      {\pgfplotslegendfromname{fred}};
\end{tikzpicture}
\caption{Reconstruction of the Riccati map (left) and its derivative (right) with two standard deviation credible bands for EKS0 (red), EKS1 (blue), and IEKS (green), using a step size of $h = 0.25$.}\label{fig:riccati_demo2}
\end{figure}

\subsection{The Fitz--Hugh--Nagumo Model}\label{sec:fitzhughnagumo}
Consider the Fitz--Hugh--Nagumo model, which is given by
\begin{equation}
D \begin{pmatrix} y_1(t) \\ y_2(t) \end{pmatrix} = \begin{pmatrix} c ( y_1(t) - y_1^3(t)/3 + y_2(t) ) \\ - \frac{1}{c}( y_1(t) - a + b y_2(t) ) \end{pmatrix}.
\end{equation}
The initial conditions and parameters are set to $y_2(0) = -y_1(0) = 1$, and $(a,b,c) = (0.2,0.2,2)$, respectively. The solution is estimated by EKS0, EKS1, and IEKS with an $\mathrm{IWP}(\mathrm{I},\nu)$ prior $(1\leq \nu \leq 4)$ on a uniform grid with $2^{12}+1$ points on the interval $[0,2.5]$, using the same decimation scheme as previously. As this ODE does not have a closed form solution, it is approximated with $\texttt{ode45}$\footnote{This is an adaptive embedded Runge--Kutta $4/5$ method.} in MATLAB, which is called with the parameters $\mathtt{RelTol} = 10^{-14}$, and $\mathtt{AbsTol} = 10^{-14}$. The approximate $\mathcal{L}_2$ error of the zeroth and first order derivative estimates of $y_1^*$ are shown in Figures \ref{fig:fitz-hugh-nagumo_convergence} and \ref{fig:d1_fitz-hugh-nagumo_convergence}, respectively.  The results appear to be consistent with the findings from the previous experiments.

Examples of the solver output of EKS1 and IEKS for $\nu = 2$ and $h = 0.4375$ is in Figures \ref{fig:fitz-hugh-nagumo_demo1} and \ref{fig:fitz-hugh-nagumo_demo2} for the first and second coordinates of $y$, respectively. The estimate and uncertainty quantification of the IEKS can be seen to be quite good, except for a slight undershoot in the estimate of $\dot{y}_1$ at $t = 1$. The performance of EKS1 is poorer, it overshoots quite a bit in its estimate of $y_1$ at around $t = 1.5$, which is not appropriately reflected in its credible interval.

\begin{figure}[t!]
\centering
\begin{tikzpicture}
  \begin{groupplot}
    [group style={group size= 2 by 2, horizontal sep=0.9cm}, height = 0.2\textwidth, width=0.5\textwidth]

    \nextgroupplot[
    width	= 0.25\textwidth,
    ymode = log,
    xmode = log,
    xmin = 0.0049, xmax = 0.625,
    ymin = 1e-13, ymax = 10,
    grid = major,
    xtick = {1e-3,1e-2,1e-1},
    ytick = {1e-11,1e-8,1e-5,1e-2,1e+1},
    ticklabel style = {font=\tiny},
    xlabel style = {font=\scriptsize},
    ylabel style = {font=\scriptsize},
    ylabel = {$\abs[0]{\hat{y}_1 - y_1^* }_{H^0_\infty}$},
    title style = {font=\footnotesize},
    title = {$\nu = 1$},
    ]

    \addplot[
      draw = black,
      line width = 0.8pt,
    ]
    table[
      x = h,
      y = 1,
    ]{fitz-hugh-nagumo_refs1.txt};

    \addplot[
      mark = *,
      mark repeat = {1},
      mark options = {scale=0.7,fill=red!50},
      draw = red!50,
      line width = 0.8pt,
    ]
    table[
      x = h,
      y = 1,
    ]{fitz-hugh-nagumo_eks0_error1.txt};

    \addplot[
      mark = square*,
      mark repeat = {1},
      mark options = {scale=0.7,fill=blue!50},
      draw = blue!50,
      line width = 0.8pt,
    ]
    table[
      x = h,
      y = 1,
    ]{fitz-hugh-nagumo_eks1_error1.txt};

    \addplot[
      mark = star,
      mark repeat = {1},
      mark options = {scale=0.7,fill=green!50},
      draw = green!50,
      line width = 0.8pt,
    ]
    table[
      x = h,
      y = 1,
    ]{fitz-hugh-nagumo_ieks_error1.txt};

    \nextgroupplot[
    width	= 0.25\textwidth,
    ymode = log,
    xmode = log,
    xmin = 0.0049, xmax = 0.625,
    ymin = 1e-13, ymax = 10,
    grid = major,
    xtick = {1e-3,1e-2,1e-1},
    ytick = {1e-11,1e-8,1e-5,1e-2,1e+1},
    ticklabel style = {font=\tiny},
    xlabel style = {font=\scriptsize},
    ylabel style = {font=\scriptsize},
    title style = {font=\footnotesize},
    title = {$\nu = 2$},
    ]

    \addplot[
      draw = black,
      line width = 0.8pt,
    ]
    table[
      x = h,
      y = 2,
    ]{fitz-hugh-nagumo_refs1.txt};

    \addplot[
      mark = *,
      mark repeat = {1},
      mark options = {scale=0.7,fill=red!50},
      draw = red!50,
      line width = 0.8pt,
    ]
    table[
      x = h,
      y = 2,
    ]{fitz-hugh-nagumo_eks0_error1.txt};

    \addplot[
      mark = square*,
      mark repeat = {1},
      mark options = {scale=0.7,fill=blue!50},
      draw = blue!50,
      line width = 0.8pt,
    ]
    table[
      x = h,
      y = 2,
    ]{fitz-hugh-nagumo_eks1_error1.txt};

    \addplot[
      mark = star,
      mark repeat = {1},
      mark options = {scale=0.7,fill=green!50},
      draw = green!50,
      line width = 0.8pt,
    ]
    table[
      x = h,
      y = 2,
    ]{fitz-hugh-nagumo_ieks_error1.txt};

    \nextgroupplot[
    width	= 0.25\textwidth,
    ymode = log,
    xmode = log,
    xmin = 0.0049, xmax = 0.625,
    ymin = 1e-13, ymax = 10,
    grid = major,
    xtick = {1e-3,1e-2,1e-1},
    ytick = {1e-11,1e-8,1e-5,1e-2,1e+1},
    ticklabel style = {font=\tiny},
    xlabel style = {font=\scriptsize},
    ylabel style = {font=\scriptsize},
    xlabel = {$\delta$},
    ylabel = {$\abs[0]{\hat{y}_1 - y_1^* }_{H^0_\infty}$},
    title style = {font=\footnotesize},
    title = {$\nu = 3$},
    ]

    \addplot[
      draw = black,
      line width = 0.8pt,
    ]
    table[
      x = h,
      y = 3,
    ]{fitz-hugh-nagumo_refs1.txt};

    \addplot[
      mark = *,
      mark repeat = {1},
      mark options = {scale=0.7,fill=red!50},
      draw = red!50,
      line width = 0.8pt,
    ]
    table[
      x = h,
      y = 3,
    ]{fitz-hugh-nagumo_eks0_error1.txt};

    \addplot[
      mark = square*,
      mark repeat = {1},
      mark options = {scale=0.7,fill=blue!50},
      draw = blue!50,
      line width = 0.8pt,
    ]
    table[
      x = h,
      y = 3,
    ]{fitz-hugh-nagumo_eks1_error1.txt};

    \addplot[
      mark = star,
      mark repeat = {1},
      mark options = {scale=0.7,fill=green!50},
      draw = green!50,
      line width = 0.8pt,
    ]
    table[
      x = h,
      y = 3,
    ]{fitz-hugh-nagumo_ieks_error1.txt};

    \coordinate (c1) at (rel axis cs:0,1);

    \nextgroupplot[
    width	= 0.25\textwidth,
    ymode = log,
    xmode = log,
    xmin = 0.0049, xmax = 0.625,
    ymin = 1e-13, ymax = 10,
    grid = major,
    xtick = {1e-3,1e-2,1e-1},
    ytick = {1e-11,1e-8,1e-5,1e-2,1e+1},
    ticklabel style = {font=\tiny},
    xlabel style = {font=\scriptsize},
    ylabel style = {font=\scriptsize},
    xlabel = {$\delta$},
    title style = {font=\footnotesize},
    title = {$\nu = 4$},
    legend style={at={($(0,0)+(1cm,1cm)$)},legend columns=4,fill=none,draw=black,anchor=center,align=center},
    legend to name=fred,
    legend style = {font=\footnotesize},
    ]

    \addplot[
      draw = black,
      line width = 0.8pt,
    ]
    table[
      x = h,
      y = 4,
    ]{fitz-hugh-nagumo_refs1.txt};

    \addplot[
      mark = *,
      mark repeat = {1},
      mark options = {scale=0.7,fill=red!50},
      draw = red!50,
      line width = 0.8pt,
    ]
    table[
      x = h,
      y = 4,
    ]{fitz-hugh-nagumo_eks0_error1.txt};

    \addplot[
      mark = square*,
      mark repeat = {1},
      mark options = {scale=0.7,fill=blue!50},
      draw = blue!50,
      line width = 0.8pt,
    ]
    table[
      x = h,
      y = 4,
    ]{fitz-hugh-nagumo_eks1_error1.txt};

    \addplot[
      mark = star,
      mark repeat = {1},
      mark options = {scale=0.7,fill=green!50},
      draw = green!50,
      line width = 0.8pt,
    ]
    table[
      x = h,
      y = 4,
    ]{fitz-hugh-nagumo_ieks_error1.txt};

    \coordinate (c2) at (rel axis cs:1,1);

    \addlegendentry{$\delta^\nu$};
    \addlegendentry{$\mathsf{EKS0}$};
    \addlegendentry{$\mathsf{EKS1}$};
    \addlegendentry{$\mathsf{IEKS}$};

  \end{groupplot}
  \coordinate (c3) at ($(c1)!.5!(c2)$);
    \node[below] at (c3 |- current bounding box.south)
      {\pgfplotslegendfromname{fred}};
\end{tikzpicture}
\caption{$\mathcal{L}_\infty$ error of the solution estimate as produced by EKS0 (red), EKS1 (blue), IEKS (green), and the predicted MAP rate $\delta^\nu$ (black), versus fill-distance.}\label{fig:fitz-hugh-nagumo_convergence}
\end{figure}
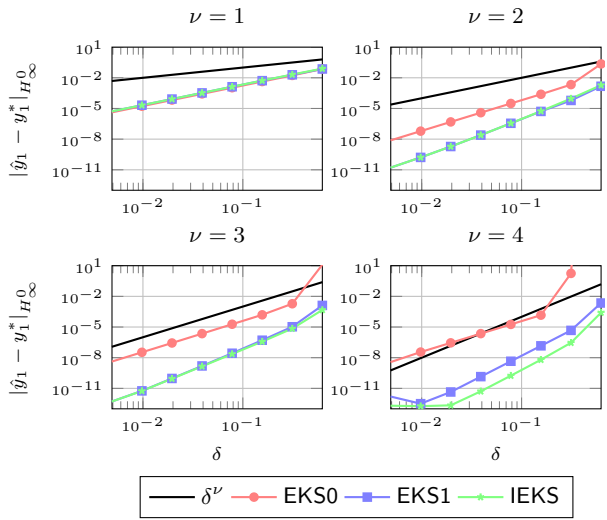

\begin{figure}[t!]
\centering
\begin{tikzpicture}
  \begin{groupplot}
    [group style={group size= 2 by 2, horizontal sep=0.9cm}, height = 0.2\textwidth, width=0.5\textwidth]

    \nextgroupplot[
    width	= 0.25\textwidth,
    ymode = log,
    xmode = log,
    xmin = 0.0049, xmax = 0.625,
    ymin = 1e-11, ymax = 100,
    grid = major,
    xtick = {1e-3,1e-2,1e-1},
    ytick = {1e-11,1e-8,1e-5,1e-2,1e+1},
    ticklabel style = {font=\tiny},
    xlabel style = {font=\scriptsize},
    ylabel style = {font=\scriptsize},
    ylabel = {$\abs[0]{ \hat{y}_1 - y_1^* }_{H^1_\infty}$},
    title style = {font=\footnotesize},
    title = {$\nu = 1$},
    ]

    \addplot[
      draw = black,
      line width = 0.8pt,
    ]
    table[
      x = h,
      y = 1,
    ]{d1_fitz-hugh-nagumo_refs1.txt};

    \addplot[
      mark = *,
      mark repeat = {1},
      mark options = {scale=0.7,fill=red!50},
      draw = red!50,
      line width = 0.8pt,
    ]
    table[
      x = h,
      y = 1,
    ]{d1_fitz-hugh-nagumo_eks0_error1.txt};

    \addplot[
      mark = square*,
      mark repeat = {1},
      mark options = {scale=0.7,fill=blue!50},
      draw = blue!50,
      line width = 0.8pt,
    ]
    table[
      x = h,
      y = 1,
    ]{d1_fitz-hugh-nagumo_eks1_error1.txt};

    \addplot[
      mark = star,
      mark repeat = {1},
      mark options = {scale=0.7,fill=green!50},
      draw = green!50,
      line width = 0.8pt,
    ]
    table[
      x = h,
      y = 1,
    ]{d1_fitz-hugh-nagumo_ieks_error1.txt};

    \nextgroupplot[
    width	= 0.25\textwidth,
    ymode = log,
    xmode = log,
    xmin = 0.0049, xmax = 0.625,
    ymin = 1e-11, ymax = 100,
    grid = major,
    xtick = {1e-3,1e-2,1e-1},
    ytick = {1e-11,1e-8,1e-5,1e-2,1e+1},
    ticklabel style = {font=\tiny},
    xlabel style = {font=\scriptsize},
    ylabel style = {font=\scriptsize},
    title style = {font=\footnotesize},
    title = {$\nu = 2$},
    ]

    \addplot[
      draw = black,
      line width = 0.8pt,
    ]
    table[
      x = h,
      y = 2,
    ]{d1_fitz-hugh-nagumo_refs1.txt};

    \addplot[
      mark = *,
      mark repeat = {1},
      mark options = {scale=0.7,fill=red!50},
      draw = red!50,
      line width = 0.8pt,
    ]
    table[
      x = h,
      y = 2,
    ]{d1_fitz-hugh-nagumo_eks0_error1.txt};

    \addplot[
      mark = square*,
      mark repeat = {1},
      mark options = {scale=0.7,fill=blue!50},
      draw = blue!50,
      line width = 0.8pt,
    ]
    table[
      x = h,
      y = 2,
    ]{d1_fitz-hugh-nagumo_eks1_error1.txt};

    \addplot[
      mark = star,
      mark repeat = {1},
      mark options = {scale=0.7,fill=green!50},
      draw = green!50,
      line width = 0.8pt,
    ]
    table[
      x = h,
      y = 2,
    ]{d1_fitz-hugh-nagumo_ieks_error1.txt};

    \nextgroupplot[
    width	= 0.25\textwidth,
    ymode = log,
    xmode = log,
    xmin = 0.0049, xmax = 0.625,
    ymin = 1e-11, ymax = 100,
    grid = major,
    xtick = {1e-3,1e-2,1e-1},
    ytick = {1e-11,1e-8,1e-5,1e-2,1e+1},
    ticklabel style = {font=\tiny},
    xlabel style = {font=\scriptsize},
    ylabel style = {font=\scriptsize},
    xlabel = {$\delta$},
    ylabel = {$\abs[0]{\hat{y}_1 - y_1^* }_{H^1_\infty}$},
    title style = {font=\footnotesize},
    title = {$\nu = 3$},
    ]

    \addplot[
      draw = black,
      line width = 0.8pt,
    ]
    table[
      x = h,
      y = 3,
    ]{d1_fitz-hugh-nagumo_refs1.txt};

    \addplot[
      mark = *,
      mark repeat = {1},
      mark options = {scale=0.7,fill=red!50},
      draw = red!50,
      line width = 0.8pt,
    ]
    table[
      x = h,
      y = 3,
    ]{d1_fitz-hugh-nagumo_eks0_error1.txt};

    \addplot[
      mark = square*,
      mark repeat = {1},
      mark options = {scale=0.7,fill=blue!50},
      draw = blue!50,
      line width = 0.8pt,
    ]
    table[
      x = h,
      y = 3,
    ]{d1_fitz-hugh-nagumo_eks1_error1.txt};

    \addplot[
      mark = star,
      mark repeat = {1},
      mark options = {scale=0.7,fill=green!50},
      draw = green!50,
      line width = 0.8pt,
    ]
    table[
      x = h,
      y = 3,
    ]{d1_fitz-hugh-nagumo_ieks_error1.txt};

        \coordinate (c1) at (rel axis cs:0,1);

    \nextgroupplot[
    width	= 0.25\textwidth,
    ymode = log,
    xmode = log,
    xmin = 0.0049, xmax = 0.625,
    ymin = 1e-11, ymax = 100,
    grid = major,
    xtick = {1e-3,1e-2,1e-1},
    ytick = {1e-11,1e-8,1e-5,1e-2,1e+1},
    ticklabel style = {font=\tiny},
    xlabel style = {font=\scriptsize},
    ylabel style = {font=\scriptsize},
    xlabel = {$\delta$},
    title style = {font=\footnotesize},
    title = {$\nu = 4$},
    legend style={at={($(0,0)+(1cm,1cm)$)},legend columns=4,fill=none,draw=black,anchor=center,align=center},
    legend to name=fred,
    legend style = {font=\footnotesize},
    ]

    \addplot[
      draw = black,
      line width = 0.8pt,
    ]
    table[
      x = h,
      y = 4,
    ]{d1_fitz-hugh-nagumo_refs1.txt};

    \addplot[
      mark = *,
      mark repeat = {1},
      mark options = {scale=0.7,fill=red!50},
      draw = red!50,
      line width = 0.8pt,
    ]
    table[
      x = h,
      y = 4,
    ]{d1_fitz-hugh-nagumo_eks0_error1.txt};

    \addplot[
      mark = square*,
      mark repeat = {1},
      mark options = {scale=0.7,fill=blue!50},
      draw = blue!50,
      line width = 0.8pt,
    ]
    table[
      x = h,
      y = 4,
    ]{d1_fitz-hugh-nagumo_eks1_error1.txt};

    \addplot[
      mark = star,
      mark repeat = {1},
      mark options = {scale=0.7,fill=green!50},
      draw = green!50,
      line width = 0.8pt,
    ]
    table[
      x = h,
      y = 4,
    ]{d1_fitz-hugh-nagumo_ieks_error1.txt};

    \coordinate (c2) at (rel axis cs:1,1);

    \addlegendentry{$\delta^{\nu-1/2}$};
    \addlegendentry{$\mathsf{EKS0}$};
    \addlegendentry{$\mathsf{EKS1}$};
    \addlegendentry{$\mathsf{IEKS}$};

  \end{groupplot}
  \coordinate (c3) at ($(c1)!.5!(c2)$);
    \node[below] at (c3 |- current bounding box.south)
      {\pgfplotslegendfromname{fred}};
\end{tikzpicture}
\caption{$\mathcal{L}_\infty$ error of the derivative estimate as produced by EKS0 (red), EKS1 (blue), IEKS (green), and the predicted MAP rate $\delta^{\nu-1/2}$ (black), versus fill-distance.}\label{fig:d1_fitz-hugh-nagumo_convergence}
\end{figure}

\begin{figure}[t!]
\centering
\begin{tikzpicture}
  \begin{groupplot}
    [group style={group size= 2 by 1, horizontal sep=1.5cm}, height = 0.2\textwidth, width=0.5\textwidth]

    \nextgroupplot[
    width	= 0.25\textwidth,
    xmin = 0, xmax = 2.5,
    ymin = -1, ymax = 2.5,
    grid = major,
    xtick = {0,0.5,1,1.5,2,2.5},
    ytick = {-1,-0.5,0,0.5,1,1.5,2,2.5},
    ticklabel style = {font=\tiny},
    xlabel style = {font=\scriptsize},
    ylabel style = {font=\scriptsize},
    xlabel = {$t$},
    ylabel = {$y_1$},
    title style = {font=\footnotesize},
    ]

    \addplot[
      draw = black,
      line width = 0.8pt,
    ]
    table[
      x = t,
      y = y,
    ]{fitz-hugh-nagumo_demo1_gt.txt};

     \addplot[
       mark = square*,
       mark repeat = {200},
       mark options = {scale=0.5,fill=blue!50},
       draw = blue!50,
       line width = 0.7pt,
       ]
       table[
         x = t,
         y = eks1,
       ]{fitz-hugh-nagumo_demo1_eks1.txt};

      \addplot[name path=upper_eks1,draw=none] table[x=t,y=eks1ub]{fitz-hugh-nagumo_demo1_eks1.txt};
      \addplot[name path=lower_eks1,draw=none] table[x=t,y=eks1lb]{fitz-hugh-nagumo_demo1_eks1.txt};
      \addplot[fill=blue!30,fill opacity=0.7] fill between[of=upper_eks1 and lower_eks1];

      \addplot[
        mark = star,
        mark repeat = {200},
        mark options = {scale=0.5,fill=green!50},
        draw = green!50,
        line width = 0.7pt,
        ]
        table[
          x = t,
          y = ieks,
        ]{fitz-hugh-nagumo_demo1_ieks.txt};

       \addplot[name path=upper_ieks,draw=none] table[x=t,y=ieksub]{fitz-hugh-nagumo_demo1_ieks.txt};
       \addplot[name path=lower_ieks,draw=none] table[x=t,y=iekslb]{fitz-hugh-nagumo_demo1_ieks.txt};
       \addplot[fill=green!30,fill opacity=0.7] fill between[of=upper_ieks and lower_ieks];

       \coordinate (c1) at (rel axis cs:0,1);

    \nextgroupplot[
    width	= 0.25\textwidth,
    xmin = 0, xmax = 2.5,
    ymin = -2, ymax = 4,
    grid = major,
    xtick = {0,0.5,1,1.5,2,2.5},
    ytick = {-2,-1,0,1,2,3,4},
    ticklabel style = {font=\tiny},
    xlabel style = {font=\scriptsize},
    ylabel style = {font=\scriptsize},
    xlabel = {$t$},
    ylabel = {$\dot{y}_1$},
    title style = {font=\footnotesize},
    legend style={at={($(0,0)+(1cm,1cm)$)},legend columns=3,fill=none,draw=black,anchor=center,align=center},
    legend to name=fred,
    legend style = {font=\footnotesize},
    ]

    \addplot[
      draw = black,
      line width = 0.8pt,
    ]
    table[
      x = t,
      y = dy,
    ]{fitz-hugh-nagumo_demo1_gt.txt};

      \addplot[
        mark = square*,
        mark repeat = {200},
        mark options = {scale=0.7,fill=blue!50},
        draw = blue!50,
        line width = 0.8pt,
        ]
        table[
          x = t,
          y = deks1,
        ]{fitz-hugh-nagumo_demo1_eks1.txt};

        \addplot[name path=upper_deks1,draw=none,forget plot] table[x=t,y=deks1ub]{fitz-hugh-nagumo_demo1_eks1.txt};
        \addplot[name path=lower_deks1,draw=none,forget plot] table[x=t,y=deks1lb]{fitz-hugh-nagumo_demo1_eks1.txt};
        \addplot[fill=blue!30, fill opacity=0.7,forget plot] fill between[of=upper_deks1 and lower_deks1];

        \addplot[
          mark = star,
          mark repeat = {200},
          mark options = {scale=0.7,fill=green!50},
          draw = green!50,
          line width = 0.8pt,
          ]
          table[
            x = t,
            y = dieks,
          ]{fitz-hugh-nagumo_demo1_ieks.txt};

          \addplot[name path=upper_dieks,draw=none,forget plot] table[x=t,y=dieksub]{fitz-hugh-nagumo_demo1_ieks.txt};
          \addplot[name path=lower_dieks,draw=none,forget plot] table[x=t,y=diekslb]{fitz-hugh-nagumo_demo1_ieks.txt};
          \addplot[fill=green!30, fill opacity=0.7,forget plot] fill between[of=upper_dieks and lower_dieks];

        \coordinate (c2) at (rel axis cs:1,1);

    \addlegendentry{$\mathrm{GT}$};
    \addlegendentry{$\mathsf{EKS1}$};
    \addlegendentry{$\mathsf{IEKS}$};

  \end{groupplot}
  \coordinate (c3) at ($(c1)!.5!(c2)$);
    \node[below] at (c3 |- current bounding box.south)
      {\pgfplotslegendfromname{fred}};
\end{tikzpicture}
\caption{Reconstruction of the first coordinate, $y_1$, in the Fitz--Hugh--Nagumo model (left) and its derivative (right) with two standard deviation credible bands for EKS1 (blue) and IEKS (green), using a step size of $h = 0.25$.}\label{fig:fitz-hugh-nagumo_demo1}
\end{figure}

\begin{figure}[t!]
\centering
\begin{tikzpicture}
  \begin{groupplot}
    [group style={group size= 2 by 1, horizontal sep=1.5cm}, height = 0.2\textwidth, width=0.5\textwidth]

    \nextgroupplot[
    width	= 0.25\textwidth,
    xmin = 0, xmax = 2.5,
    ymin = -0.5, ymax = 1.5,
    grid = major,
    xtick = {0,0.5,1,1.5,2,2.5},
    ytick = {-0.5,0,0.5,1,1.5},
    ticklabel style = {font=\tiny},
    xlabel style = {font=\scriptsize},
    ylabel style = {font=\scriptsize},
    xlabel = {$t$},
    ylabel = {$y_2$},
    title style = {font=\footnotesize},
    ]

    \addplot[
      draw = black,
      line width = 0.8pt,
    ]
    table[
      x = t,
      y = y,
    ]{fitz-hugh-nagumo_demo2_gt.txt};

     \addplot[
       mark = square*,
       mark repeat = {200},
       mark options = {scale=0.5,fill=blue!50},
       draw = blue!50,
       line width = 0.7pt,
       ]
       table[
         x = t,
         y = eks1,
       ]{fitz-hugh-nagumo_demo2_eks1.txt};

      \addplot[name path=upper_eks1,draw=none] table[x=t,y=eks1ub]{fitz-hugh-nagumo_demo2_eks1.txt};
      \addplot[name path=lower_eks1,draw=none] table[x=t,y=eks1lb]{fitz-hugh-nagumo_demo2_eks1.txt};
      \addplot[fill=blue!30,fill opacity=0.7] fill between[of=upper_eks1 and lower_eks1];

      \addplot[
        mark = star,
        mark repeat = {200},
        mark options = {scale=0.5,fill=green!50},
        draw = green!50,
        line width = 0.7pt,
        ]
        table[
          x = t,
          y = ieks,
        ]{fitz-hugh-nagumo_demo2_ieks.txt};

       \addplot[name path=upper_ieks,draw=none] table[x=t,y=ieksub]{fitz-hugh-nagumo_demo2_ieks.txt};
       \addplot[name path=lower_ieks,draw=none] table[x=t,y=iekslb]{fitz-hugh-nagumo_demo2_ieks.txt};
       \addplot[fill=green!30,fill opacity=0.7] fill between[of=upper_ieks and lower_ieks];

       \coordinate (c1) at (rel axis cs:0,1);

    \nextgroupplot[
    width	= 0.25\textwidth,
    xmin = 0, xmax = 2.5,
    ymin = -2, ymax = 1,
    grid = major,
    xtick = {0,0.5,1,1.5,2,2.5},
    ytick = {-2,-1,0,1},
    ticklabel style = {font=\tiny},
    xlabel style = {font=\scriptsize},
    ylabel style = {font=\scriptsize},
    xlabel = {$t$},
    ylabel = {$\dot{y}_2$},
    title style = {font=\footnotesize},
    legend style={at={($(0,0)+(1cm,1cm)$)},legend columns=3,fill=none,draw=black,anchor=center,align=center},
    legend to name=fred,
    legend style = {font=\footnotesize},
    ]

    \addplot[
      draw = black,
      line width = 0.8pt,
    ]
    table[
      x = t,
      y = dy,
    ]{fitz-hugh-nagumo_demo2_gt.txt};

      \addplot[
        mark = square*,
        mark repeat = {200},
        mark options = {scale=0.7,fill=blue!50},
        draw = blue!50,
        line width = 0.8pt,
        ]
        table[
          x = t,
          y = deks1,
        ]{fitz-hugh-nagumo_demo2_eks1.txt};

        \addplot[name path=upper_deks1,draw=none,forget plot] table[x=t,y=deks1ub]{fitz-hugh-nagumo_demo2_eks1.txt};
        \addplot[name path=lower_deks1,draw=none,forget plot] table[x=t,y=deks1lb]{fitz-hugh-nagumo_demo2_eks1.txt};
        \addplot[fill=blue!30, fill opacity=0.7,forget plot] fill between[of=upper_deks1 and lower_deks1];

        \addplot[
          mark = star,
          mark repeat = {200},
          mark options = {scale=0.7,fill=green!50},
          draw = green!50,
          line width = 0.8pt,
          ]
          table[
            x = t,
            y = dieks,
          ]{fitz-hugh-nagumo_demo2_ieks.txt};

          \addplot[name path=upper_dieks,draw=none,forget plot] table[x=t,y=dieksub]{fitz-hugh-nagumo_demo2_ieks.txt};
          \addplot[name path=lower_dieks,draw=none,forget plot] table[x=t,y=diekslb]{fitz-hugh-nagumo_demo2_ieks.txt};
          \addplot[fill=green!30, fill opacity=0.7,forget plot] fill between[of=upper_dieks and lower_dieks];

        \coordinate (c2) at (rel axis cs:1,1);

    \addlegendentry{$\mathrm{GT}$};
    \addlegendentry{$\mathsf{EKS1}$};
    \addlegendentry{$\mathsf{IEKS}$};

  \end{groupplot}
  \coordinate (c3) at ($(c1)!.5!(c2)$);
    \node[below] at (c3 |- current bounding box.south)
      {\pgfplotslegendfromname{fred}};
\end{tikzpicture}
\caption{Reconstruction of the first coordinate, $y_2$, in the Fitz--Hugh--Nagumo model  (left) and its derivative (right) with two standard deviation credible bands for EKS1 (blue) and IEKS (green), using a step size of $h = 0.25$.}\label{fig:fitz-hugh-nagumo_demo2}
\end{figure}

\subsection{A Non-smooth Example}\label{sec:nonsmooth}
Let the vector field $f$ be given by
\begin{equation}
f(y) =
\begin{cases}
\kappa, \quad y \leq b, \\
\kappa + \lambda(y - b), \quad y > b,
\end{cases}
\end{equation}
and consider the following ODE:
\begin{equation}
\dot{y}(t) = f(y(t)), \quad y(0) = y_0 \leq b.
\end{equation}
If $\kappa > 0$, the solution is given by
\begin{equation}\label{eq:nonsmooth_solution}
y^*(t) =
\begin{cases}
 y_0 + \kappa t, \quad t \leq \tau^*,\\
 b + \frac{1}{\lambda}(\exp(\lambda(t-\tau^*)) - 1) \kappa, \quad t > \tau^*,
\end{cases}
\end{equation}
where $\tau^* = (b-y_0)/\kappa$. While $f$ is continuous, it has a discontinuity in its derivative at $y = b$ and therefore Assumption \ref{ass:smooth} is violated for all $\nu \geq 1$. Nonetheless the solution is approximated by EKS0, EKS1, and IEKS using an $\mathrm{IWP}$ prior of smoothness $0 \leq \nu \leq 4$, and the parameters are set to $y_0 = 0$, $b = 1$, $\kappa = 2(b- y_0)$, and $\lambda = - 5$. The $\mathcal{L}_\infty$ errors of the zeroth and first derivative of the approximate solutions are shown in Figures \ref{fig:nonsmooth_convergence} and \ref{fig:d1_nonsmooth_convergence}, respectively. Additionally, a comparison of the solver outputs of EKS1 and IEKS is shown in Figure \ref{fig:nonsmooth_demo} for $\nu = 2$ and $h = 0.25$.

The estimates still appear to converge as seen in Figures \ref{fig:nonsmooth_convergence} and \ref{fig:d1_nonsmooth_convergence}. However, while the rate predicted by Theorem \ref{thm:convergence} appears to still be obtained for $\nu = 1$, a rate reduction is observed for $\nu > 1$ (in comparison to the rate of Theorem \ref{thm:convergence}). As Assumption \ref{ass:smooth} is violated, these results cannot be explained by the present theory.

However, note that Theorem \ref{thm:convergence} was obtained using $y^* \in \mathbb{Y}$ (Corollary \ref{cor:correct_model}) and $\mathcal{S}_f$ is locally Lipschitz (Theorem \ref{thm:nemytsky_properties}), together with the sampling inequalities of \cite{Arcangeli2007}. These properties of $f$ and $y^*$ may be obtainable by other means than invoking Assumption \ref{ass:smooth}. This could explain the results for $\nu = 1$.

On the other hand, in the setting of numerical integration, reduction in convergence rates when the RKHS is smoother than the integrand has been investigated by \citealt{Kanagawa2020a}. If these results can be extended to the setting of solving ODEs, it could explain the results for $\nu > 1$.

\begin{figure}[t!]
\centering
\begin{tikzpicture}
  \begin{groupplot}
    [group style={group size= 2 by 2, horizontal sep=0.9cm}, height = 0.2\textwidth, width=0.5\textwidth]

    \nextgroupplot[
    width	= 0.25\textwidth,
    ymode = log,
    xmode = log,
    xmin = 0.0039, xmax = 0.125,
    ymin = 1e-10, ymax = 1,
    grid = major,
    xtick = {1e-3,1e-2,1e-1},
    ytick = {1e-12,1e-9,1e-6,1e-3,1e-0},
    ticklabel style = {font=\tiny},
    xlabel style = {font=\scriptsize},
    ylabel style = {font=\scriptsize},
    ylabel = {$\abs[0]{\hat{y} - y^* }_{H^0_\infty}$},
    title style = {font=\footnotesize},
    title = {$\nu = 1$},
    ]

    \addplot[
      draw = black,
      line width = 0.8pt,
    ]
    table[
      x = h,
      y = 1,
    ]{nonsmooth_refs.txt};

    \addplot[
      mark = *,
      mark repeat = {1},
      mark options = {scale=0.7,fill=red!50},
      draw = red!50,
      line width = 0.8pt,
    ]
    table[
      x = h,
      y = 1,
    ]{nonsmooth_eks0_error.txt};

    \addplot[
      mark = square*,
      mark repeat = {1},
      mark options = {scale=0.7,fill=blue!50},
      draw = blue!50,
      line width = 0.8pt,
    ]
    table[
      x = h,
      y = 1,
    ]{nonsmooth_eks1_error.txt};

    \addplot[
      mark = star,
      mark repeat = {1},
      mark options = {scale=0.7,fill=green!50},
      draw = green!50,
      line width = 0.8pt,
    ]
    table[
      x = h,
      y = 1,
    ]{nonsmooth_ieks_error.txt};

    \nextgroupplot[
    width	= 0.25\textwidth,
    ymode = log,
    xmode = log,
    xmin = 0.0039, xmax = 0.125,
    ymin = 1e-10, ymax = 1,
    grid = major,
    xtick = {1e-3,1e-2,1e-1},
    ytick = {1e-12,1e-9,1e-6,1e-3,1e-0},
    ticklabel style = {font=\tiny},
    xlabel style = {font=\scriptsize},
    ylabel style = {font=\scriptsize},
    title style = {font=\footnotesize},
    title = {$\nu = 2$},
    ]

    \addplot[
      draw = black,
      line width = 0.8pt,
    ]
    table[
      x = h,
      y = 2,
    ]{nonsmooth_refs.txt};

    \addplot[
      mark = *,
      mark repeat = {1},
      mark options = {scale=0.7,fill=red!50},
      draw = red!50,
      line width = 0.8pt,
    ]
    table[
      x = h,
      y = 2,
    ]{nonsmooth_eks0_error.txt};

    \addplot[
      mark = square*,
      mark repeat = {1},
      mark options = {scale=0.7,fill=blue!50},
      draw = blue!50,
      line width = 0.8pt,
    ]
    table[
      x = h,
      y = 2,
    ]{nonsmooth_eks1_error.txt};

    \addplot[
      mark = star,
      mark repeat = {1},
      mark options = {scale=0.7,fill=green!50},
      draw = green!50,
      line width = 0.8pt,
    ]
    table[
      x = h,
      y = 2,
    ]{nonsmooth_ieks_error.txt};

    \nextgroupplot[
    width	= 0.25\textwidth,
    ymode = log,
    xmode = log,
    xmin = 0.0039, xmax = 0.125,
    ymin = 1e-10, ymax = 1,
    grid = major,
    xtick = {1e-3,1e-2,1e-1},
    ytick = {1e-12,1e-9,1e-6,1e-3,1e-0},
    ticklabel style = {font=\tiny},
    xlabel style = {font=\scriptsize},
    ylabel style = {font=\scriptsize},
    xlabel = {$\delta$},
    ylabel = {$\abs[0]{\hat{y} - y^* }_{H^0_\infty}$},
    title style = {font=\footnotesize},
    title = {$\nu = 3$},
    ]

    \addplot[
      draw = black,
      line width = 0.8pt,
    ]
    table[
      x = h,
      y = 3,
    ]{nonsmooth_refs.txt};

    \addplot[
      mark = *,
      mark repeat = {1},
      mark options = {scale=0.7,fill=red!50},
      draw = red!50,
      line width = 0.8pt,
    ]
    table[
      x = h,
      y = 3,
    ]{nonsmooth_eks0_error.txt};

    \addplot[
      mark = square*,
      mark repeat = {1},
      mark options = {scale=0.7,fill=blue!50},
      draw = blue!50,
      line width = 0.8pt,
    ]
    table[
      x = h,
      y = 3,
    ]{nonsmooth_eks1_error.txt};

    \addplot[
      mark = star,
      mark repeat = {1},
      mark options = {scale=0.7,fill=green!50},
      draw = green!50,
      line width = 0.8pt,
    ]
    table[
      x = h,
      y = 3,
    ]{nonsmooth_ieks_error.txt};

    \coordinate (c1) at (rel axis cs:0,1);

    \nextgroupplot[
    width	= 0.25\textwidth,
    ymode = log,
    xmode = log,
    xmin = 0.0039, xmax = 0.125,
    ymin = 1e-10, ymax = 1,
    grid = major,
    xtick = {1e-3,1e-2,1e-1},
    ytick = {1e-12,1e-9,1e-6,1e-3,1e-0},
    ticklabel style = {font=\tiny},
    xlabel style = {font=\scriptsize},
    ylabel style = {font=\scriptsize},
    xlabel = {$\delta$},
    title style = {font=\footnotesize},
    title = {$\nu = 4$},
    legend style={at={($(0,0)+(1cm,1cm)$)},legend columns=4,fill=none,draw=black,anchor=center,align=center},
    legend to name=fred,
    legend style = {font=\footnotesize},
    ]

    \addplot[
      draw = black,
      line width = 0.8pt,
    ]
    table[
      x = h,
      y = 4,
    ]{nonsmooth_refs.txt};

    \addplot[
      mark = *,
      mark repeat = {1},
      mark options = {scale=0.7,fill=red!50},
      draw = red!50,
      line width = 0.8pt,
    ]
    table[
      x = h,
      y = 4,
    ]{nonsmooth_eks0_error.txt};

    \addplot[
      mark = square*,
      mark repeat = {1},
      mark options = {scale=0.7,fill=blue!50},
      draw = blue!50,
      line width = 0.8pt,
    ]
    table[
      x = h,
      y = 4,
    ]{nonsmooth_eks1_error.txt};

    \addplot[
      mark = star,
      mark repeat = {1},
      mark options = {scale=0.7,fill=green!50},
      draw = green!50,
      line width = 0.8pt,
    ]
    table[
      x = h,
      y = 4,
    ]{nonsmooth_ieks_error.txt};

    \coordinate (c2) at (rel axis cs:1,1);

    \addlegendentry{$\delta^\nu$};
    \addlegendentry{$\mathsf{EKS0}$};
    \addlegendentry{$\mathsf{EKS1}$};
    \addlegendentry{$\mathsf{IEKS}$};

  \end{groupplot}
  \coordinate (c3) at ($(c1)!.5!(c2)$);
    \node[below] at (c3 |- current bounding box.south)
      {\pgfplotslegendfromname{fred}};
\end{tikzpicture}
\caption{$\mathcal{L}_\infty$ error of the solution estimate as produced by EKS0 (red), EKS1 (blue), IEKS (green), and the predicted MAP rate $\delta^\nu$ (black), versus fill-distance.}\label{fig:nonsmooth_convergence}
\end{figure}
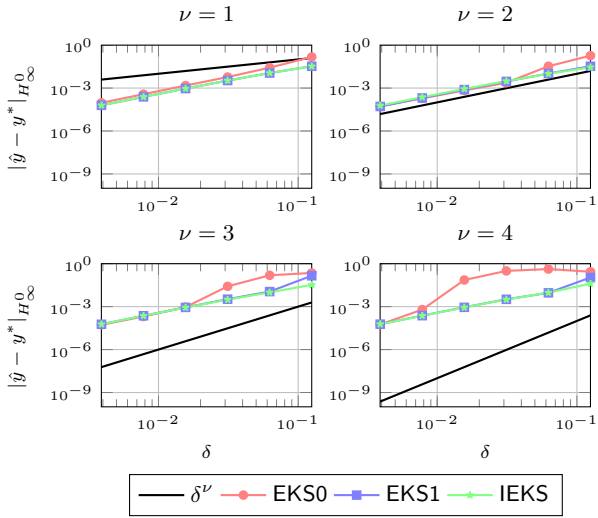

\begin{figure}[t!]
\centering
\begin{tikzpicture}
  \begin{groupplot}
    [group style={group size= 2 by 2, horizontal sep=0.9cm}, height = 0.2\textwidth, width=0.5\textwidth]

    \nextgroupplot[
    width	= 0.25\textwidth,
    ymode = log,
    xmode = log,
    xmin = 0.0039, xmax = 0.125,
    ymin = 1e-9, ymax = 10,
    grid = major,
    xtick = {1e-3,1e-2,1e-1},
    ytick = {1e-9,1e-7,1e-5,1e-3,1e-1,1e+1},
    ticklabel style = {font=\tiny},
    xlabel style = {font=\scriptsize},
    ylabel style = {font=\scriptsize},
    ylabel = {$\abs[0]{ \hat{y} - y^* }_{H^1_\infty}$},
    title style = {font=\footnotesize},
    title = {$\nu = 1$},
    ]

    \addplot[
      draw = black,
      line width = 0.8pt,
    ]
    table[
      x = h,
      y = 1,
    ]{d1_nonsmooth_refs.txt};

    \addplot[
      mark = *,
      mark repeat = {1},
      mark options = {scale=0.7,fill=red!50},
      draw = red!50,
      line width = 0.8pt,
    ]
    table[
      x = h,
      y = 1,
    ]{d1_nonsmooth_eks0_error.txt};

    \addplot[
      mark = square*,
      mark repeat = {1},
      mark options = {scale=0.7,fill=blue!50},
      draw = blue!50,
      line width = 0.8pt,
    ]
    table[
      x = h,
      y = 1,
    ]{d1_nonsmooth_eks1_error.txt};

    \addplot[
      mark = star,
      mark repeat = {1},
      mark options = {scale=0.7,fill=green!50},
      draw = green!50,
      line width = 0.8pt,
    ]
    table[
      x = h,
      y = 1,
    ]{d1_nonsmooth_ieks_error.txt};

    \nextgroupplot[
    width	= 0.25\textwidth,
    ymode = log,
    xmode = log,
    xmin = 0.0039, xmax = 0.125,
    ymin = 1e-9, ymax = 10,
    grid = major,
    xtick = {1e-3,1e-2,1e-1},
    ytick = {1e-9,1e-7,1e-5,1e-3,1e-1,1e+1},
    ticklabel style = {font=\tiny},
    xlabel style = {font=\scriptsize},
    ylabel style = {font=\scriptsize},
    title style = {font=\footnotesize},
    title = {$\nu = 2$},
    ]

    \addplot[
      draw = black,
      line width = 0.8pt,
    ]
    table[
      x = h,
      y = 2,
    ]{d1_nonsmooth_refs.txt};

    \addplot[
      mark = *,
      mark repeat = {1},
      mark options = {scale=0.7,fill=red!50},
      draw = red!50,
      line width = 0.8pt,
    ]
    table[
      x = h,
      y = 2,
    ]{d1_nonsmooth_eks0_error.txt};

    \addplot[
      mark = square*,
      mark repeat = {1},
      mark options = {scale=0.7,fill=blue!50},
      draw = blue!50,
      line width = 0.8pt,
    ]
    table[
      x = h,
      y = 2,
    ]{d1_nonsmooth_eks1_error.txt};

    \addplot[
      mark = star,
      mark repeat = {1},
      mark options = {scale=0.7,fill=green!50},
      draw = green!50,
      line width = 0.8pt,
    ]
    table[
      x = h,
      y = 2,
    ]{d1_nonsmooth_ieks_error.txt};

    \nextgroupplot[
    width	= 0.25\textwidth,
    ymode = log,
    xmode = log,
    xmin = 0.0039, xmax = 0.125,
    ymin = 1e-9, ymax = 10,
    grid = major,
    xtick = {1e-3,1e-2,1e-1},
    ytick = {1e-9,1e-7,1e-5,1e-3,1e-1,1e+1},
    ticklabel style = {font=\tiny},
    xlabel style = {font=\scriptsize},
    ylabel style = {font=\scriptsize},
    xlabel = {$\delta$},
    ylabel = {$\abs[0]{\hat{y} - y^* }_{H^1_\infty}$},
    title style = {font=\footnotesize},
    title = {$\nu = 3$},
    ]

    \addplot[
      draw = black,
      line width = 0.8pt,
    ]
    table[
      x = h,
      y = 3,
    ]{d1_nonsmooth_refs.txt};

    \addplot[
      mark = *,
      mark repeat = {1},
      mark options = {scale=0.7,fill=red!50},
      draw = red!50,
      line width = 0.8pt,
    ]
    table[
      x = h,
      y = 3,
    ]{d1_nonsmooth_eks0_error.txt};

    \addplot[
      mark = square*,
      mark repeat = {1},
      mark options = {scale=0.7,fill=blue!50},
      draw = blue!50,
      line width = 0.8pt,
    ]
    table[
      x = h,
      y = 3,
    ]{d1_nonsmooth_eks1_error.txt};

    \addplot[
      mark = star,
      mark repeat = {1},
      mark options = {scale=0.7,fill=green!50},
      draw = green!50,
      line width = 0.8pt,
    ]
    table[
      x = h,
      y = 3,
    ]{d1_nonsmooth_ieks_error.txt};

        \coordinate (c1) at (rel axis cs:0,1);

    \nextgroupplot[
    width	= 0.25\textwidth,
    ymode = log,
    xmode = log,
    xmin = 0.0039, xmax = 0.125,
    ymin = 1e-9, ymax = 10,
    grid = major,
    xtick = {1e-3,1e-2,1e-1},
    ytick = {1e-9,1e-7,1e-5,1e-3,1e-1,1e+1},
    ticklabel style = {font=\tiny},
    xlabel style = {font=\scriptsize},
    ylabel style = {font=\scriptsize},
    xlabel = {$\delta$},
    title style = {font=\footnotesize},
    title = {$\nu = 4$},
    legend style={at={($(0,0)+(1cm,1cm)$)},legend columns=4,fill=none,draw=black,anchor=center,align=center},
    legend to name=fred,
    legend style = {font=\footnotesize},
    ]

    \addplot[
      draw = black,
      line width = 0.8pt,
    ]
    table[
      x = h,
      y = 4,
    ]{d1_nonsmooth_refs.txt};

    \addplot[
      mark = *,
      mark repeat = {1},
      mark options = {scale=0.7,fill=red!50},
      draw = red!50,
      line width = 0.8pt,
    ]
    table[
      x = h,
      y = 4,
    ]{d1_nonsmooth_eks0_error.txt};

    \addplot[
      mark = square*,
      mark repeat = {1},
      mark options = {scale=0.7,fill=blue!50},
      draw = blue!50,
      line width = 0.8pt,
    ]
    table[
      x = h,
      y = 4,
    ]{d1_nonsmooth_eks1_error.txt};

    \addplot[
      mark = star,
      mark repeat = {1},
      mark options = {scale=0.7,fill=green!50},
      draw = green!50,
      line width = 0.8pt,
    ]
    table[
      x = h,
      y = 4,
    ]{d1_nonsmooth_ieks_error.txt};

    \coordinate (c2) at (rel axis cs:1,1);

    \addlegendentry{$\delta^{\nu-1/2}$};
    \addlegendentry{$\mathsf{EKS0}$};
    \addlegendentry{$\mathsf{EKS1}$};
    \addlegendentry{$\mathsf{IEKS}$};

  \end{groupplot}
  \coordinate (c3) at ($(c1)!.5!(c2)$);
    \node[below] at (c3 |- current bounding box.south)
      {\pgfplotslegendfromname{fred}};
\end{tikzpicture}
\caption{$\mathcal{L}_\infty$ error of the derivative estimate as produced by EKS0 (red), EKS1 (blue), IEKS (green), and the predicted MAP rate $\delta^\nu$ (black), versus fill-distance.}\label{fig:d1_nonsmooth_convergence}
\end{figure}

\begin{figure}[t!]
\centering
\begin{tikzpicture}
  \begin{groupplot}
    [group style={group size= 2 by 1, horizontal sep=1.5cm}, height = 0.2\textwidth, width=0.5\textwidth]

    \nextgroupplot[
    width	= 0.25\textwidth,
    xmin = 0, xmax = 1,
    ymin = 0, ymax = 1.4,
    grid = major,
    xtick = {0.2,0.4,0.6,0.8,1},
    ytick = {0,0.2,0.4,0.6,0.8,1,1.2,1.4},
    ticklabel style = {font=\tiny},
    xlabel style = {font=\scriptsize},
    ylabel style = {font=\scriptsize},
    xlabel = {$t$},
    ylabel = {$y$},
    title style = {font=\footnotesize},
    ]

    \addplot[
      draw = black,
      line width = 0.8pt,
    ]
    table[
      x = t,
      y = y,
    ]{nonsmooth_demo_gt.txt};

     \addplot[
       mark = square*,
       mark repeat = {200},
       mark options = {scale=0.5,fill=blue!50},
       draw = blue!50,
       line width = 0.7pt,
       ]
       table[
         x = t,
         y = eks1,
       ]{nonsmooth_demo_eks1.txt};

      \addplot[name path=upper_eks1,draw=none] table[x=t,y=eks1ub]{nonsmooth_demo_eks1.txt};
      \addplot[name path=lower_eks1,draw=none] table[x=t,y=eks1lb]{nonsmooth_demo_eks1.txt};
      \addplot[fill=blue!30,fill opacity=0.7] fill between[of=upper_eks1 and lower_eks1];

      \addplot[
        mark = star,
        mark repeat = {200},
        mark options = {scale=0.5,fill=green!50},
        draw = green!50,
        line width = 0.7pt,
        ]
        table[
          x = t,
          y = ieks,
        ]{nonsmooth_demo_ieks.txt};

       \addplot[name path=upper_ieks,draw=none] table[x=t,y=ieksub]{nonsmooth_demo_ieks.txt};
       \addplot[name path=lower_ieks,draw=none] table[x=t,y=iekslb]{nonsmooth_demo_ieks.txt};
       \addplot[fill=green!30,fill opacity=0.7] fill between[of=upper_ieks and lower_ieks];

       \coordinate (c1) at (rel axis cs:0,1);

    \nextgroupplot[
    width	= 0.25\textwidth,
    xmin = 0, xmax = 1,
    ymin = -0.5, ymax = 2.5,
    grid = major,
    xtick = {0,0.2,0.4,0.6,0.8,1},
    ytick = {-0.5,0,0.5,1,1.5},
    ticklabel style = {font=\tiny},
    xlabel style = {font=\scriptsize},
    ylabel style = {font=\scriptsize},
    xlabel = {$t$},
    ylabel = {$\dot{y}$},
    title style = {font=\footnotesize},
    legend style={at={($(0,0)+(1cm,1cm)$)},legend columns=4,fill=none,draw=black,anchor=center,align=center},
    legend to name=fred,
    legend style = {font=\footnotesize},
    ]

    \addplot[
      draw = black,
      line width = 0.8pt,
    ]
    table[
      x = t,
      y = dy,
    ]{nonsmooth_demo_gt.txt};

      \addplot[
        mark = square*,
        mark repeat = {200},
        mark options = {scale=0.7,fill=blue!50},
        draw = blue!50,
        line width = 0.8pt,
        ]
        table[
          x = t,
          y = deks1,
        ]{nonsmooth_demo_eks1.txt};

        \addplot[name path=upper_deks1,draw=none,forget plot] table[x=t,y=deks1ub]{nonsmooth_demo_eks1.txt};
        \addplot[name path=lower_deks1,draw=none,forget plot] table[x=t,y=deks1lb]{nonsmooth_demo_eks1.txt};
        \addplot[fill=blue!30, fill opacity=0.7,forget plot] fill between[of=upper_deks1 and lower_deks1];

        \addplot[
          mark = star,
          mark repeat = {200},
          mark options = {scale=0.7,fill=green!50},
          draw = green!50,
          line width = 0.8pt,
          ]
          table[
            x = t,
            y = dieks,
          ]{nonsmooth_demo_ieks.txt};

          \addplot[name path=upper_dieks,draw=none,forget plot] table[x=t,y=dieksub]{nonsmooth_demo_ieks.txt};
          \addplot[name path=lower_dieks,draw=none,forget plot] table[x=t,y=diekslb]{nonsmooth_demo_ieks.txt};
          \addplot[fill=green!30, fill opacity=0.7,forget plot] fill between[of=upper_dieks and lower_dieks];

        \coordinate (c2) at (rel axis cs:1,1);

    \addlegendentry{$\mathrm{GT}$};
    \addlegendentry{$\mathsf{EKS1}$};
    \addlegendentry{$\mathsf{IEKS}$};

  \end{groupplot}
  \coordinate (c3) at ($(c1)!.5!(c2)$);
    \node[below] at (c3 |- current bounding box.south)
      {\pgfplotslegendfromname{fred}};
\end{tikzpicture}
\caption{Reconstruction of the solution to the non-smooth ODE (left) and its derivative (right) with two standard deviation credible bands for EKS1 (blue) and IEKS (green), using a step size of $h = 0.25$.}\label{fig:nonsmooth_demo}
\end{figure}
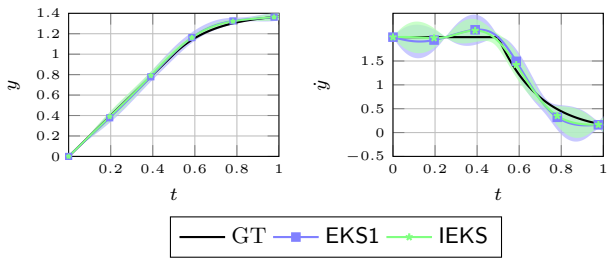

\section{Conclusion}\label{sec:discussion}
In this paper, the maximum a posteriori estimate associated with the Bayesian solution of initial value problems \citep{Cockayne2019a} was examined and it was shown to enjoy fast convergence rates to the true solution.

In the present setting the MAP estimate is just taken as a given, in the sense that IEKS is not guaranteed to produced the global optimum of the MAP problem. It would therefore be fruitful to study the MAP problem more carefully. In particular, establishing conditions on the vector field and the fill-distance under which the MAP problem admits a unique local optimum would be a point for future research. On the algorithmic side, other MAP estimators can be considered, such as Levenberg--Marquardt \citep{Sarkka2020a} or alternate direction method of multipliers \citep{Boyd2011,Gao2019a}.

Furthermore, the empirical findings of Section \ref{sec:numerical_examples} suggests, although not being MAP estimators, EKS0 and EKS1 can likely be given convergence statements similar to Theorem \ref{thm:convergence}. It is not immediately clear what the most effective approach for this purpose is. On one hand, one can attempt to significantly extend the results of \cite{Kersting2018}, which is more in line with how convergence rates are obtained for classical solvers. On the other hand, it seems like the methodology developed here can be extended for local convergence analysis as well by considering the filter update as an interpolation problem in some RKHS on each interval $[t_{n-1},t_n]$.

\begin{acknowledgements}
The authors have had productive discussions with Toni Karvonen and Hans Kersting.
\end{acknowledgements}

\appendix


\section{Computing Transition Densities}\label{app:discretisation}
An effective method for computing the parameters of the transition density in  \eqref{eq:discrete_time_parameters} is the \emph{matrix fraction decomposition} \citep{VanLoan1978,Axelsson2014,Sarkka2019}. Define the matrix valued function
\begin{equation*}
\Xi(h) = \exp \Bigg(  \begin{pmatrix}F & \mathrm{E}_\nu \Gamma \mathrm{E}_\nu^\T \\  0 & -F^\T\end{pmatrix}h \Bigg).
\end{equation*}
It can then be shown that $\Xi$ has the following structure
\begin{equation*}
\Xi(h) = \begin{pmatrix} \Xi_{11}(h) & \Xi_{12}(h) \\ 0 & \Xi_{22}(h) \end{pmatrix},
\end{equation*}
and \citep{Axelsson2014}
\begin{subequations}
\begin{align}
A(h) &= \Xi_{11}(h), \\
Q(h) &= \Xi_{12}(h) \Xi_{11}^\T(h).
\end{align}
\end{subequations}
Furthermore, the Green's functions can be evaluated by the same means by noting that (see \eqref{eq:greens_functions})
\begin{equation*}
G_X(t,\tau) = \theta(t-\tau)A(t-\tau) \mathrm{E}_\nu \Gamma^{1/2}.
\end{equation*}

\section{Calibration}\label{app:uncertainty_calibration}
For a full statistical treatment of the inference problem, the parameters $F_m$ $m=0,\ldots,\nu$, $\Gamma$ and $\Sigma(t_0^-)$ need to be estimated. Of particular importance in terms of calibrating uncertainty properly are $\Sigma(t_0^-)$ and $\Gamma$ (see \eqref{eq:prior_measure_x}), which the present discussion is just concerned with.

It can be shown that the logarithm of (quasi-) likelihood as produced by the Gaussian inference methods is, up to an unimportant constant, given by (cf. \citealt{Tronarp2019d})
\begin{equation*}
\begin{split}
\ell &= -\frac{1}{2}\log \det S(t_0) - \frac{1}{2}\begin{pmatrix} y_0 \\ f(0,y_0) \end{pmatrix}^\T S^{-1}(t_0) \begin{pmatrix} y_0 \\ f(0,y_0) \end{pmatrix} \\
 &\quad- \frac{1}{2}\sum_{n=1}^N \log \det S(t_n) - \frac{1}{2}\sum_{n=1}^N\norm[0]{\zeta(t_n) - C(t_n)\mu_F(t_n^-)}_{S(t_n)}^2.
\end{split}
\end{equation*}
Additionally, if  $\Sigma(t_0^-) = \sigma^2\breve{\Sigma}(t_0^-)$ and $\Gamma = \sigma^2 \breve{\Gamma}$ for some positive definite matrices $\breve{\Sigma}_F(t_0^-)$ and $\breve{\Gamma}$, then it can be shown that the log-likelihood, up to some unimportant constant, reduces to (see Appendix C of \citealt{Tronarp2019c} for details)\footnote{There is a slight difference in the log-likelihood expression from that of \cite{Tronarp2019c}. This is because here the initial conditions are inferred while \cite{Tronarp2019c} encodes them directly in the prior.}
\begin{equation*}
\begin{split}
\ell(\sigma) &=  - \frac{1}{2\sigma^2}\begin{pmatrix} y_0 \\ f(0,y_0) \end{pmatrix}^\T \breve{S}^{-1}(t_0) \begin{pmatrix} y_0 \\ f(0,y_0) \end{pmatrix} \\
 &\quad - \frac{1}{2\sigma^2}\sum_{n=1}^N\norm[0]{\zeta(t_n) - C(t_n)\mu_F(t_n^-)}_{\breve{S}(t_n)}^2 \\
 &\quad -\frac{d(N+2)}{2} \log \sigma^2,
\end{split}
\end{equation*}
where $\breve{\cdotp}$ denotes the output of the filter using the parameters $(\breve{\Sigma}(t_0^-),\breve{\Gamma})$ rather than $(\Sigma(t_0^-),\Gamma)$. This yields the following proposition, which is proven in Appendix C of \cite{Tronarp2019c}, \emph{mutatis mutandis}.

\begin{proposition}\label{prop:s2_mle}
Let $\Sigma(t_0^-) = \sigma^2\breve{\Sigma}(t_0^-)$ and $\Gamma = \sigma^2 \breve{\Gamma}$ for some positive definite matrices $\breve{\Sigma}(t_0^-)$ and $\breve{\Gamma}$, then the (quasi-) maximum likelihood estimate of $\sigma^2$ is given by
\begin{equation}
\begin{split}
\hat{\sigma}_N^2 &= \frac{1}{d(N+2)}\begin{pmatrix} y_0 \\ f(0,y_0) \end{pmatrix}^\T \breve{S}^{-1}(t_0) \begin{pmatrix} y_0 \\ f(0,y_0) \end{pmatrix} \\
&\quad+\frac{1}{d(N+2)}\sum_{n=1}^N\norm[0]{\zeta(t_n) - C(t_n)\mu_F(t_n^-)}_{\breve{S}(t_n)}^2.
\end{split}
\end{equation}
\end{proposition}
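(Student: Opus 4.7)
The plan is to exploit the fact, established in the text preceding the proposition, that with the parametrisation $\Sigma(t_0^-) = \sigma^2 \breve{\Sigma}(t_0^-)$ and $\Gamma = \sigma^2 \breve{\Gamma}$ the log-likelihood reduces to
\begin{equation*}
\ell(\sigma) = -\frac{1}{2\sigma^2} \mathcal{Q} - \frac{d(N+2)}{2}\log\sigma^2 + \text{const},
\end{equation*}
where $\mathcal{Q}$ is the sum of the quadratic forms evaluated with $\breve{S}(t_0), \breve{S}(t_1), \ldots, \breve{S}(t_N)$. Once this representation is in hand, the proposition follows by elementary calculus: differentiate $\ell$ with respect to $\sigma^2$, set the derivative to zero, and read off $\hat\sigma_N^2 = \mathcal{Q}/(d(N+2))$.

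The substantive step is therefore to justify the reduction to this form, which amounts to tracking how scaling the prior and diffusion covariances by $\sigma^2$ propagates through the Kalman filter recursions of Section \ref{subsec:affine_vector_field}. First I would observe that since $A(h)$ in \eqref{eq:discrete_time_model} does not depend on $\sigma^2$ while $Q(h)$ in \eqref{eq:discrete_time_process_noise} is linear in $\Gamma$ and hence equals $\sigma^2 \breve{Q}(h)$, an induction on $n$ shows that both $\Sigma_F(t_n^-)$ and $\Sigma_F(t_n)$ factor as $\sigma^2$ times the corresponding quantities computed with $\breve{\Sigma}(t_0^-)$ and $\breve{\Gamma}$. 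In particular, $S(t_n) = \sigma^2 \breve{S}(t_n)$ from \eqref{eq:marginal_variance}, and the same identity holds at $t_0$ by construction of the initial update.

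A crucial sub-step is that the \emph{means} $\mu_F(t_n^-)$ and $\mu_F(t_n)$ are independent of $\sigma^2$: the Kalman gain \eqref{eq:kalman_gain} has the factor $\sigma^2$ cancel between $\Sigma_F(t_n^-) C^\T(t_n)$ and $S^{-1}(t_n)$, so the update \eqref{eq:filter_mean_update} preserves this scale invariance. Consequently, each innovation $\zeta(t_n) - C(t_n)\mu_F(t_n^-)$ coincides with the innovation produced by the ``tilde'' filter, and the Mahalanobis terms satisfy $\norm{\zeta(t_n) - C(t_n)\mu_F(t_n^-)}_{S(t_n)}^2 = \sigma^{-2}\norm{\zeta(t_n) - C(t_n)\mu_F(t_n^-)}_{\breve{S}(t_n)}^2$, with the analogous identity for the initial Kalman update. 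For the log-determinants, since $S(t_0)$ has dimension $2d \times 2d$ and each $S(t_n)$ for $n \geq 1$ is $d \times d$, one gets $\sum_{n=0}^N \log\det S(t_n) = d(N+2)\log\sigma^2 + \text{const}$, producing the claimed $-d(N+2)/2$ coefficient on $\log\sigma^2$.

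The main obstacle — really only a bookkeeping one — is confirming that every quantity in the filter recursion respects the two scaling relations above, so that no hidden dependence on $\sigma^2$ enters $\mathcal{Q}$. Once this invariance is verified, optimality of $\hat\sigma_N^2$ follows because $\ell$ is strictly concave in $\sigma^2$ on $(0,\infty)$: its derivative $\frac{\mathcal{Q}}{2\sigma^4} - \frac{d(N+2)}{2\sigma^2}$ has a unique positive root, which is the stated estimator. As the author notes, the argument is identical in spirit to that in Appendix C of \cite{Tronarp2019c}, with the sole modification being the additional $2d$ contribution from inferring the initial condition rather than absorbing it into the prior.
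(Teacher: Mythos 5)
Your proof is correct and takes essentially the same route as the paper, which likewise reduces the quasi-log-likelihood to the form $-\mathcal{Q}/(2\sigma^2)-\tfrac{d(N+2)}{2}\log\sigma^2$ and maximises over $\sigma^2$, deferring the filter-scaling bookkeeping to Appendix C of Tronarp et al.\ (2019); your explicit induction (means and innovations $\sigma^2$-invariant, covariances and $S(t_n)$ scaling by $\sigma^2$, and the $2d+Nd=d(N+2)$ count from the log-determinants) is precisely the content of that reference. One minor correction: $\ell$ is not strictly concave in $\sigma^2$ on all of $(0,\infty)$ (the second derivative of $-a/u-b\log u$ changes sign at $u=2a/b$), but global optimality still follows because $\partial\ell/\partial\sigma^2=\mathcal{Q}/(2\sigma^4)-d(N+2)/(2\sigma^2)$ is positive below and negative above its unique positive root.
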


Bounds for worst case overconfidence and underconfidence under maximum likelihood estimation of $\sigma^2$ has recently been obtained by \cite{Karvonen2020}. These results appear to carry over to the present setting for affine vector fields. However, it is not immediately clear how to generalise this to a larger class of vector fields.

%
%

\bibliographystyle{spbasic}      
\bibliography{refs}   

\end{document}